\newtheorem{theorem}{Theorem}
\newtheorem{proposition}{Proposition}
\newtheorem{corollary}{Corollary}
\newtheorem{lemma}{Lemma}
\newcommand{\R}{{\mathbb R}}
\newcommand{\Z}{{\mathbb Z}}
\newcommand{\set}[2]{ \left\{ #1 \ \left| \ #2 \right. \right\}}
\newcommand{\supp}{\mathop{\mathrm{supp}}}
\newcommand{\dist}{\operatorname{dist}}
\newcommand{\dH}{d {\mathcal H}}
\renewcommand{\H}{{\mathcal H}}
\newcommand{\li}[1]{{}^{#1}\Sigma}
\newcommand{\ri}[1]{\Sigma^{#1}}
\newcommand{\vis}{\operatorname{Vis}}
\newcommand{\ang}[1]{\left<#1\right>}
\title{Testing conditions for multilinear Radon-Brascamp-Lieb inequalities}
\author{Philip T. Gressman\footnote{Partially supported by NSF grants DMS-1764143 and DMS-2054602.}}
\date{\today}
\begin{document}
\maketitle
\begin{abstract}
This paper establishes a necessary and sufficient condition for $L^p$-boundedness of a class of multilinear functionals which includes both the Brascamp-Lieb inequalities and generalized Radon transforms associated to algebraic incidence relations. The testing condition involves bounding the average of an inverse power of certain Jacobian-type quantities along fibers of associated projections and covers many widely-studied special cases, including convolution with measures on nondegenerate hypersurfaces or on nondegenerate curves. The heart of the proof is based on Guth's visibility lemma \cite{guth2010} in one direction and on a careful analysis of Knapp-type examples in the other. Various applications are discussed which demonstrate new and subtle interplay between curvature and transversality and establish nontrivial mixed-norm $L^p$-improving inequalities in the model case of convolution with affine hypersurface measure on the paraboloid.
\end{abstract}

\tableofcontents

\section{Introduction}
\subsection{Background and main results}

Radon-like transforms are objects of extensive study in harmonic analysis, appearing in connection with singular integral theory (both single and multiparameter), microlocal analysis,  and Fourier-theoretic settings \cites{gs1984,ps1986I,ps1986II,cnsw1999,ss2011,ss2012,ss2013,street2012,gs1994,is1996,gs1998,lt2001,gs2002,ggip2015}. Such objects also find application in a wide variety of theoretical and applied problems even beyond the more well-known setting of medical imaging \cites{ms1999,rv2005,bhht2009,patel2018}. This paper introduces a class of multilinear inequalities which combine both Radon-like transforms and linear and nonlinear Brascamp-Lieb inequalities, which are, in their own right, tools of immense importance in modern Fourier analysis, particularly in decoupling theory (see \cites{bcct2008,bcct2010,bbcf2017,bcw2005,bb2010,bbg2013II,bbbcf2020,duncan2021,bdg2016}). The main result is a geometric characterization of the boundedness of these multilinear objects on products of Lebesgue spaces along a certain scaling line of exponents. The nature of the result is rather different than existing results concerning extremizers or quasi-extremals for Radon-like transforms \cites{stovall2009II,christ2011,biswas2019} and instead reduces the problem to what can be viewed as an unusual new category of uniform sublevel set inequalities.

The literature contains many useful ways to to describe and study Radon-like transforms (e.g., projections, fibrations, and vector fields 
\cites{tw2003,stovall2009,cdss2020}); the formulation that seems most helpful for present purposes is to work primarily with defining functions and incidence relations. To that end,
suppose $\Omega \subset \R^{n} \times \R^{n'}$ is an open set. Let $\pi : \Omega \rightarrow \R^{k}$ for some $k \leq \min\{n,n'\}$ and suppose that $\pi$ is smooth.  The zero set of $\pi$ will represent incidence pairs $(x,y) \in \R^{n} \times \R^{n'}$ such that the associated Radon-like transform, when evaluated at $x$, integrates functions over a submanifold passing through $y$. For this perspective to be applied in a straightforward way, it is necessary for the defining function $\pi$ to be nonsingular in a certain sense.
For any vectors $v_1,\ldots,v_k$ in $\R^n$, let $d_x \pi |_{(x,y)} (v_1,\ldots,v_k)$ be defined to equal the determinant of the $k \times k$ matrix whose $i,j$-entry is $\sum_{\ell=1}^n v_j^{\ell} \frac{\partial \pi^i}{\partial x^\ell}$
(throughout this paper, when some $v \in \R^{\ell}$ must be expressed in standard coordinates, superscript notation $(v^1,\ldots,v^\ell)$ is generally used).  Let $d_y \pi|_{(x,y)}(v_1,\ldots,v_k)$ be defined similarly as the determinant of the matrix whose $i,j$-entry is $\sum_{\ell=1}^{n'} v_j^{\ell} \frac{\partial \pi^i}{\partial y^\ell}$.
Finally, for any $n$-tuple $\omega := \{\omega_i\}_{i=1}^n$ of vectors in $\R^n$, let
\[ ||d_x \pi(x,y)||_\omega := \left[ \frac{1}{k!} \sum_{i_1=1}^n \cdots \sum_{i_k=1}^n \left| d_x \pi|_{(x,y)} ( \omega_{i_1},\ldots,\omega_{i_k}) \right|^2 \right]^\frac{1}{2} \]
and likewise set
\[ ||d_y \pi(x,y)||_{\omega'} := \left[ \frac{1}{k!} \sum_{i_1=1}^{n'} \cdots \sum_{i_k=1}^{n'} \left|  d_y \pi |_{(x,y)} (\omega'_{i_1},\ldots,\omega'_{i_k}) \right|^2 \right]^\frac{1}{2} \]
for any $n'$-tuple of vectors $\omega' := \{\omega'_i\}_{i=1}^{n'}$ in $\R^{n'}$. The notation $||d_x \pi(x,y)||$ and $||d_y \pi(x,y)||$ will be used when $\omega$ or $\omega'$ should be taken to be the tuple of standard basis vectors of $\R^n$ or $\R^{n'}$, respectively.

Any triple $(\Omega,\pi,\Sigma)$ will be called a smooth incidence relation on $\R^{n} \times \R^{n'}$ of codimension $k$ when $\Omega \subset \R^{n} \times \R^{n'}$ is open, $\pi : \Omega \rightarrow \R^{k}$ is smooth, and
\begin{equation} \Sigma := \set{ (x,y) \in \Omega}{ \pi(x,y) = 0, ||d_x \pi(x,y)||, ||d_y \pi(x,y)|| > 0}. \label{sigmadef} \end{equation}
As above, it will always be assumed that $k \leq \min\{n,n'\}$.
The notation $\li{x}$ and $\ri{y}$ will indicate slices of $\Sigma$ with fixed $x$ and $y$, respectively:
\begin{align*}
\li{x} & := \set{y \in \R^{n'}}{ (x,y) \in \Sigma} \text{ and }
\ri{y}  := \set{x \in \R^{n\vphantom{'}}}{(x,y) \in \Sigma}.
\end{align*}
On each slice $\li{x}$ and $\ri{y}$, $\sigma$ denotes what will be called the coarea measure (also known as the Leray or microcanonical measure elsewhere), given by 
\[ \int_{\li{x}} \! f d \sigma := \int_{\li{x}} \! \! f(y) \frac{\dH^{n'-k}(y)}{||d_y \pi (x,y)||} \text{ and } \int_{\ri{y}} \! f d \sigma := \int_{\ri{y}} \! \! f(x) \frac{\dH^{n-k}(x)}{||d_x \pi (x,y)||} \]
for any Borel-measurable function $f$ on the slices (Borel measurability is assumed for convenience throughout the paper to avoid technical difficulties associated with restricting Lebesgue-measurable functions to submanifolds), where $\dH^{s}$ is the usual $s$-dimensional Hausdorff measure.  

The main result of this paper is as follows. 
\begin{theorem} \label{mainthm} For any integer $m \geq 1$ and
each $j = 1,\ldots,m$, let $(\Omega_j,\pi_j,\Sigma_j)$ be a smooth incidence relation on $\R^{n} \times \R^{n_j}$ with codimension $k_j \leq \min \{n,n_j\}$ and let $\sigma_j$ denote the associated coarea measures on slices.
 Let $w_j : \Sigma_j \rightarrow [0,\infty)$ be continuous, and let $T_j$ be the generalized Radon-like transform given by
\begin{equation} T_j f(x) := \int_{\li{x}_j} f_j(y_j) w_j(x,y_j) d \sigma_j(y_j) \label{tjdef} \end{equation}
for all nonnegative Borel-measurable $f_j$ on $\R^{n_j}$.
Suppose $p_1,\ldots,p_m \in [1,\infty)$ and $q_1,\ldots,q_m \in (0,\infty)$ satisfy the scaling condition
\begin{equation} n = \sum_{j=1}^m \frac{k_jq_j}{p_j}. \label{scaling} \end{equation}
Let $||T||$ be the smallest positive constant (supposing one exists) such that for all nonnegative Borel measurable functions $f_j \in L^{p_j}(\R^{n_j})$,
\begin{equation} \int_{\R^n} \prod_{j=1}^m |T_j f_j(x)|^{q_j} dx  \leq ||T|| \prod_{j=1}^m ||f_j ||_{L^{p_j}(\R^{n_j})}^{q_j}. \label{bddness} \end{equation}
There exists a constant $C$ depending only on $n$ and $n_j,k_j,p_j,q_j$ for $j=1,\ldots,m$ such that for any $x \in \R^{n}$ and any vectors $\omega_1,\ldots,\omega_{n}$ with $|\det(\omega_1,\ldots,\omega_n)| = 1$ (where $\det (\omega_1,\ldots,\omega_n)$ is the determinant of the matrix whose columns are coordinates of $\omega_1,\ldots,\omega_n$ in the standard coordinate system),
\begin{equation}
\prod_{j \, : \, p_j = 1} \sup_{y_j \in \li{x}_j} \frac{|w_j(x,y_j)|^{q_j}}{||d_x \pi_j(x,y_j)||_\omega^{q_j}} \prod_{j \, : \, p_j > 1}  \! \left[ \int_{\li{x}_j} \!  \! \frac{|w_j(x,y_j)|^{p'_j} d \sigma_j(y_j) }{||d_x \pi_j(x,y_j)||_{\omega}^{p'_j-1}}  \right]^{\frac{q_j}{p'_j}} \! \leq C ||T|| \label{testing}
 \end{equation}
 where for each $j$, $p_j$ and $p_j'$ are H\"{o}lder dual exponents.
 Conversely, suppose $[[T]]$ is defined to be the supremum of
 \begin{equation}
\prod_{j \, : \, p_j = 1} \sup_{y_j \in \li{x}_j} \frac{|w_j(x,y_j)|^{q_j}}{||d_x \pi_j(x,y_j)||_\omega^{q_j}} \prod_{j \, : \, p_j > 1}  \left[ \int_{\li{x}_j} \! \! \frac{|w_j(x,y_j)|^{p'_j} d \sigma_j(y_j) }{||d_x \pi_j(x,y_j)||_{\omega}^{p'_j-1}}  \right]^{\frac{q_j}{p'_j}}  \label{testing2}
 \end{equation}
 over all $x \in \R^n$ and all $\{\omega_i\}_{i=1}^n$ with $|\det (\omega_1,\ldots,\omega_n)| = 1$. If $[[T]] < \infty$ 
 and each $\pi_j(x,y_j)$ is a polynomial function of $x$ with bounded degree as a function of $y_j$, then \eqref{bddness} holds for nonnegative $f_j$ with a finite value of $||T||$ satisfying $||T|| \leq  C' [[T]] \prod_{j=1}^m (\deg \pi_j)^{q_j/p_j} $ for some $C'$ depending only on $n, $ and $n_j,k_j,p_j,q_j$, where $\deg \pi_j := \sup_{y_j} \deg \pi^1_j(\cdot,y_j) \cdots \deg \pi^k_j(\cdot,y_j)$.
\end{theorem}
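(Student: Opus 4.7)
For fixed $x_0 \in \R^{n}$ and frame $\omega = (\omega_1,\ldots,\omega_n)$ with $|\det \omega| = 1$, I would construct Knapp-type test functions $f_j$ adapted to the slice $\li{x_0}_j$. When $p_j > 1$, take $f_j$ to be a constant multiple of the indicator of a tubular neighborhood of $\li{x_0}_j$ whose normal half-width at each $y_j$ is approximately $\delta\,||d_x \pi_j(x_0,y_j)||_\omega/||d_y \pi_j(x_0,y_j)||$, truncated to a bounded region where the weight $w_j$ is essentially attained. The implicit function theorem ensures that for every $x$ in the $\omega$-parallelepiped $\{x_0 + \sum t_i \omega_i : |t_i| \leq \delta\}$ of volume $\sim \delta^n$, the slice $\li{x}_j$ stays within this tube, so $T_j f_j(x)$ dominates a constant multiple of $\int_{\li{x_0}_j} w_j\, d\sigma_j$ divided by an appropriate power of the tube thickness throughout the parallelepiped. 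Substituting into \eqref{bddness} and computing $||f_j||_{p_j}$ from the tube volume, the scaling identity $\sum_j k_j q_j/p_j = n$ makes all $\delta$-powers cancel and produces exactly the $p_j > 1$ factors of \eqref{testing2}. The $p_j = 1$ case is handled by localizing $f_j$ further to a thin neighborhood of a single point $y_j^0 \in \li{x_0}_j$ where $|w_j|/||d_x\pi_j||_\omega$ nearly saturates its supremum.

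\textbf{Sufficiency, reduction step.} The starting point is a Hölder inequality for $T_j f_j(x) = \int_{\li{x}_j} f_j w_j\, d\sigma_j$ against the auxiliary weight $||d_x\pi_j||_\omega$: for $p_j > 1$, taking $\mu = ||d_x\pi_j||_\omega$ in the standard paired form yields
\[ T_j f_j(x) \leq \left(\int_{\li{x}_j} f_j^{p_j}\,||d_x\pi_j||_\omega\, d\sigma_j\right)^{1/p_j}\left(\int_{\li{x}_j}\frac{w_j^{p'_j}\, d\sigma_j}{||d_x\pi_j||_\omega^{p'_j-1}}\right)^{1/p'_j}, \]
with an obvious $L^\infty$--$L^1$ splitting when $p_j = 1$. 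Raising to the $q_j$ power and multiplying over $j$, the rightmost factors assemble into precisely the quantity \eqref{testing2}, so choosing $\omega = \omega(x)$ pointwise bounds their product by $[[T]]$. Sufficiency of \eqref{testing2} therefore reduces to the un-weighted multilinear inequality
\[ \int_{\R^n}\prod_{j=1}^m\left(\int_{\li{x}_j} f_j(y_j)^{p_j}\,||d_x\pi_j(x,y_j)||_{\omega(x)}\,d\sigma_j(y_j)\right)^{q_j/p_j} dx \leq C\prod_{j=1}^m ||f_j||_{p_j}^{q_j}, \]
which, under the scaling constraint $\sum k_j q_j/p_j = n$, is purely geometric in the incidence data and entirely free of the weights $w_j$.

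\textbf{Sufficiency, Guth's lemma and main obstacle.} This reduced inequality is where Guth's visibility lemma does the heavy lifting. The factor $||d_x\pi_j(x,y_j)||_{\omega(x)}/||d_y\pi_j(x,y_j)||$ is recognizable as the Jacobian of the projection $\Sigma_j \to \R^{n}$ computed in the frame $\omega(x)$, so after Fubini the inner integrals rewrite as weighted integrals of $f_j^{p_j}$ along the conjugate slices $\ri{y_j}_j \subset \R^n$---exactly the visibility integrals that Guth's lemma controls. The polynomial hypothesis makes each $\ri{y_j}_j$ an algebraic variety of degree at most $\deg\pi_j$, so Guth's bound supplies the needed estimate paying a factor of $\deg\pi_j$ per variety. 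A dyadic decomposition of $f_j^{p_j}$ in size, combined by Hölder's inequality across the $m$ factors with the non-convex exponents $q_j/p_j$, assembles these pointwise visibility bounds into the global estimate and produces the constant $\prod_j (\deg\pi_j)^{q_j/p_j}$. The main obstacle is precisely this last assembly: reconciling the $x$-dependent frame $\omega(x)$ with the application of Guth's lemma at each $x$ and coordinating the $m$-fold interaction when some of the exponents $q_j/p_j$ exceed $1$ while others are smaller. By comparison, the Knapp construction on the necessity side is comparatively mechanical once the correct tube geometry has been identified.
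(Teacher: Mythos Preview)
Your overall architecture matches the paper's, but each half has a real gap.

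\textbf{Necessity.} Using a pure indicator for $f_j$ when $p_j>1$ does not recover the testing quantity \eqref{testing}. With $f_j = \chi_{\text{tube}}$ you obtain, after the $\delta$-cancellation, a bound of the form
\[
\prod_{j}\frac{\bigl(\int_{\li{x_0}_j} w_j\,d\sigma_j\bigr)^{q_j}}{\bigl(\int_{\li{x_0}_j}\|d_x\pi_j\|_\omega\,d\sigma_j\bigr)^{q_j/p_j}}\;\lesssim\;\|T\|,
\]
which by H\"older is \emph{weaker} than \eqref{testing}. The paper (Section~\ref{necessarysec}) instead takes the H\"older extremizer
\[
f_{j,\delta}(y)=\delta^{-k_j/p_j}\,\chi_{\|\overline{\pi}_j(x_0,y)\|<\delta}\,\Bigl[\tfrac{w_j(x_0,y)}{\|d_x\pi_j(x_0,y)\|}\Bigr]^{p_j'-1}\eta_j(y),
\]
so that both $T_jf_{j,\delta}(x_0)$ and $\|f_{j,\delta}\|_{p_j}^{p_j}$ are (asymptotically) the \emph{same} integral $\int w_j^{p_j'}\|d_x\pi_j\|_\omega^{1-p_j'}\,d\sigma_j$; the ratio then has the correct exponent $q_j(1-1/p_j)=q_j/p_j'$. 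The $\omega$-dependence is obtained not by shaping the tube but by a linear change of variable $x\mapsto x_0+M(x-x_0)$ at the very end, which converts $\|d_x\pi_j\|$ into $\|d_x\pi_j\|_\omega$ at no cost.

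\textbf{Sufficiency.} Your reduction via \eqref{strongineq}/\eqref{l1ineq} to the unweighted quantity $Q(f_1,\ldots,f_m)$ of \eqref{multilinear} is exactly right, and you correctly identify the obstacle: the frame $\omega(x)$ must be chosen \emph{before} you know which $\pi_j$'s will appear, yet must work for all of them simultaneously. The resolution is not a dyadic decomposition of the $f_j$; it is a duality trick. One writes $\int_{B_R} Q^{1/r}\psi^{1-1/r}\,dx$ for arbitrary $\psi\ge0$ (with $r=\sum_j q_j/p_j$), and lets Guth's visibility lemma (Lemma~\ref{guthlemma}) build the frame $\omega^x$ \emph{from $\psi$}, so that
\[
\int_{\Sigma_\pi\cap B_R}\widetilde\psi^{(n-k)/n}\|d\pi\|_{\omega^x}\,d\sigma \;\le\; C_n(\deg\pi)\Bigl(\int_{B_R}\psi\Bigr)^{(n-k)/n}
\]
holds for \emph{every} polynomial $\pi$ of every codimension. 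After Fubini \eqref{mainfubini} and a single H\"older in $j$ (with exponents $r_j/r$ summing to $1$), this gives $\int Q^{1/r}\psi^{1-1/r}\le C(\int\psi)^{1-1/r}\prod_j\|f_j^{p_j}\|_1^{r_j/r}$, and duality in $\psi$ finishes. The point you were missing is that the frame is selected by the dual function $\psi$, not by the $f_j$'s or the $\pi_j$'s individually.
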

When the defining functions $\pi_j$ have the form $\pi_j(x,y_j) := y_j - L_j(x)$ for some linear map $L_j : \R^n \rightarrow \R^{k_j}$ of full rank, the inequality \eqref{bddness} reduces to the classical Brascamp-Lieb inequality. In this case, the testing condition \eqref{testing} simplifies significantly because the slices $\li{x}_j$ are simply points and the coarea measure is simply a delta measure at $y_j = L_j(x)$. The condition on the maps $L_j$ that results from \eqref{testing2} can be understood using ideas from Geometric Invariant Theory \cite{gressman2021}, and in particular,  the supremum of \eqref{testing2} over all $\{\omega_i\}_{i=1}^n$ with determinant $\pm 1$ exactly equals a constant multiple of the Brascamp-Lieb constant as a consequence of \cite{gressman2021}*{Lemma 1}.

Radon-like transforms can of course also be written in terms of defining functions $\pi_j$. The scaling condition \eqref{scaling} is in some cases not necessary for boundedness: several very general works on $L^p$-improving properties of Radon-like transforms, including the groundbreaking works of Tao and Wright \cite{tw2003}, Stovall \cites{stovall2010,stovall2011,stovall2014} and Christ, Dendrinos, Stovall and Street \cite{cdss2020}, include positive results beyond the scaling line \eqref{scaling}.  A number of other important results also fail to be captured by \eqref{scaling} and Theorem \ref{mainthm}, including results in mixed-norm Lebesgue spaces \cites{os1982,christ1984,oberlin2010,ce2008,eo2010,hickman2016} and results focusing specifically on minimal regularity assumptions for associated submanifolds (e.g., \cite{bbg2013}). These exceptional works notwithstanding, there is a truly vast body of literature which pertains specifically to the scaling line \eqref{scaling}.
The famous $L^{(n+1)/n} \rightarrow L^{n+1}$ inequality for averages over curved hypersurfaces, first proved by Littman \cite{littman1971}, has the scaling \eqref{scaling}, as do all results in intermediate dimensions for maximally-curved ``model surfaces'' \cites{ricci1997,oberlin2008}. Endpoint estimates for convolution with affine arclength on the moment curve, first proved in the restricted weak-type sense in all dimensions in the groundbreaking work of Christ \cite{christ1998}, also belong to the scaling line \eqref{scaling} (or more precisely, one of the two endpoint inequalities falls on that line, and the other follows by duality).
See \cites{dg1991,ps1994,oberlin1999,bak2000,bos2002} for just a few additional unweighted examples and 
\cites{secco1999,oberlin1999II,oberlin2000II,oberlin2002,choi2003,dlw2009} for various weighted cases (because the weights $w_j$ in \eqref{tjdef} are essentially arbitrary, Theorem \ref{mainthm} covers affine weights and, after a limiting argument, extends to fractional integration kernels as well).
This frequency is due to the fact that bounds for \eqref{bddness} on the given scaling are automatically sharp in the sense that no bounds can hold when the right-hand side of \eqref{scaling} is strictly larger than $n$ (see the end of Section \ref{necessarysec} for justification).  Thus, even in the linear case $m=1$, Theorem \ref{mainthm} represents a significant advance in the understanding of many of the most fundamentally-important $L^p$-improving inequalities for Radon-like transforms of any dimension and codimension.
 
Like the techniques of the recent paper \cite{gressman2021}, the method of proof used here is neither combinatorial in the typical way (involving the construction and analysis of inflation maps) nor Fourier analytic. This new approach circumvents a number of recurring limitations of these common strategies. For example, it is clear from the statement of Theorem \ref{mainthm} that there are no special constraints on the dimensions $n,n_j,$ and $k_j$ in which the theorem applies, while constraints of this sort frequently arise when working with inflation map technology. Moreover, even in comparison to \cite{gressman2021}, the current proof involves a number of critical improvements. One key change is that the approach to be used here does not require a direct analysis of any nonconcentration inequalities, which was a key component of \cite{gressman2021}. This is due to a significant shift in the way that the Kakeya-Brascamp-Lieb inequality is formulated (compare Theorem \ref{main1} in Section \ref{possec} to \cite{gressman2021}*{Theorem 1}). The shift illustrates that, while in some cases it is natural to use Brascamp-Lieb inequalities as a means of building sharp weights in Kakeya-type inequalities (e.g., Zhang's variety version of Brascamp-Lieb \cite{zhang2018}*{Theorem 8.1}), in this more general setting it turns out to be beneficial to be somewhat agnostic about the sort of weights that should be encountered and to proceed along slightly more abstract lines.

Theorem \ref{mainthm}'s criterion \eqref{testing} may be regarded as roughly analogous to a sort of uniform sublevel set inequality, which transforms the problem of proving \eqref{bddness} into a very different and more tractable form to which a host of powerful tools (e.g., \cites{ccw1999,pss2001,cw2002,greenblatt2005mrl,gressman2010II}) may be applied after suitable adaptation.  Even so, the estimation of \eqref{testing2} is not trivial, particularly for intermediate dimensions (averaging over submanifolds that are neither curves nor hypersurfaces). The problem of computing the supremum of \eqref{testing2} under relatively general conditions will be taken up in a follow-up series of papers. Section \ref{examplesec} does contain several simple examples of how the necessary computations can be accomplished; a primary application of Theorem \ref{mainthm} appearing in Section \ref{examplesec} is the following result.
\begin{theorem}
For any integers $2 \leq \ell \leq n$, any functions $f_1,\ldots,f_n$ on $\R^{\ell}$, and any exponent $p \in [1,\infty)$, \label{multiobj}
\begin{equation} \begin{split} \left[ \int_{\R^n} \left| \prod_{j=1}^n \int_{\R^{\ell-1}} f_j( x^{j+1} + t^1,\ldots, x^{j+\ell-1} + t^{\ell-1}, x^{j+\ell} + ||t||^2) dt \right|^{p} dx \right]^\frac{1}{p} \hspace{-14pt} & \\ \leq C_{p,\ell,n} \prod_{j=1}^n  ||f_j &  ||_{L^p(\R^{\ell})} \end{split} \label{examp1} \end{equation}
(where indices of $x$ are interpreted periodically with period $n$, e.g., $x^{n+1} = x^1$, etc., and $||t||^2 = (t^1)^2 + \cdots + (t^{\ell-1})^2$) for some finite constant $C_{p,n,\ell}$ depending only on $n, \ell$, and $p$, if and only if 
\[ \frac{n+1}{n} \leq p < \frac{\ell}{\ell-1}. \]
When $p = \ell / (\ell-1)$, the restricted strong-type analogue of \eqref{examp1} holds.
\end{theorem}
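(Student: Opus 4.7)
My plan is to apply Theorem \ref{mainthm} to the defining functions $\pi_j(x,y_j) := y_j^\ell - x^{j+\ell} - \sum_{i=1}^{\ell-1}(y_j^i - x^{j+i})^2$, which have codimension $k_j = 1$ in $\R^n \times \R^\ell$, are polynomial of degree $2$ in $x$ (so the converse direction applies), and whose coarea measure identifies with Lebesgue measure $dt$ on $\R^{\ell-1}$ via the paraboloid parametrization. Taking $p_j = q_j = p$ makes the scaling identity $n = \sum_j k_jq_j/p_j$ automatic, so boundedness in \eqref{examp1} becomes equivalent to finiteness of $[[T]]$ from \eqref{testing2}.

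A short calculation gives $d_x\pi_j = -e_{j+\ell}^{*} + 2\sum_{i=1}^{\ell-1} t^i e_{j+i}^{*}$ where $t^i = y_j^i - x^{j+i}$. Writing $W$ for the $n \times n$ matrix whose rows are $\omega_1, \dots, \omega_n$ (so $|\det W| = 1$ and $\|d_x\pi_j\|_\omega = \|W d_x\pi_j\|$) and $c_m$ for its $m$th column, the testing integral evaluates by completing the square in the quadratic form $\|\cdot\|^2$:
\[ A_j(W) := \int_{\R^{\ell-1}} \frac{dt}{\|2\sum_{i=1}^{\ell-1} t^i c_{j+i} - c_{j+\ell}\|^{p'-1}} = C_{\ell,p'} \cdot \frac{D_j^{\ell-p'}}{V_j}, \]
where $V_j$ is the $(\ell-1)$-volume of the parallelepiped spanned by $c_{j+1},\dots,c_{j+\ell-1}$ and $D_j := \dist(c_{j+\ell},\operatorname{span}(c_{j+1},\dots,c_{j+\ell-1}))$. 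This integral is finite iff $p' > \ell$, forcing the upper endpoint $p < \ell/(\ell-1)$.

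The sufficient direction then reduces to showing $\prod_j A_j(W)^{p/p'}$ is uniformly bounded over $|\det W| = 1$ precisely when $p \geq (n+1)/n$. In the transparent case $\ell = n$, all columns participate, so $V_jD_j = |\det W| = 1$ and $A_j = CD_j^{n+1-p'}$; applying Hadamard's inequality to the dual basis $\{c_j^*\}$ (which also has unit determinant) yields $\prod_j D_j \leq 1$, so the product is uniformly bounded exactly when $n+1-p' \geq 0$, matching the claimed range. For $\ell < n$, I would derive the analogous bound from the cyclic family of Gram--Schmidt identities $1 = \|c_{j+1}\| \prod_{k=1}^{n-1}\dist(c_{j+k+1},\operatorname{span}(c_{j+1},\dots,c_{j+k}))$ (one per $j$), combined with monotonicity comparisons of the form $D_j \geq \dist(c_{j+\ell},\operatorname{span of all other columns})$ and a cyclic reindexing; I have verified by hand that this reduces the required inequality to a tractable cyclic product bound in the $(n,\ell) = (3,2)$ case.

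Necessity is handled by the forward direction of Theorem \ref{mainthm}: explicit diagonal $W$'s with disparate scales (generalizing the $n = \ell = 2$ calculation) make $[[T]] = \infty$ whenever $p < (n+1)/n$, forcing $\|T\| = \infty$. The restricted strong-type claim at $p = \ell/(\ell-1)$ follows by interpolating the interior strong-type bound against a restricted weak-type bound at the endpoint, the latter obtained from Theorem \ref{mainthm}'s testing condition applied to indicator-function-truncated integrals. The principal obstacle is the Hadamard-type inequality in the $\ell < n$ regime: classical Hadamard bounds unordered products, whereas our desired inequality reflects the cyclic index structure and requires careful AM--GM bookkeeping across the $n$ separate Gram--Schmidt chains, or equivalently an identification with an appropriate cyclic Brascamp--Lieb constant.
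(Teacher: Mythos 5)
Your reduction to the testing condition \eqref{testing2} via Theorem \ref{mainthm}, the identification of the coarea measures, and the closed-form evaluation of $A_j(W) = C\,D_j^{\ell-p'}/V_j$ all agree with what the paper does (the paper phrases $A_j$ in terms of Gram determinants, with $V_j = G(R_1,\ldots,R_{\ell-1})^{1/2}$ and $D_j^2 = G(R_1,\ldots,R_\ell)/G(R_1,\ldots,R_{\ell-1})$). The upper endpoint $p < \ell/(\ell-1)$ is handled correctly. Your $\ell = n$ case is also essentially correct and is in fact the trivial tip of the paper's machinery.

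The substantive gap is the case $\ell < n$, which you explicitly leave open. The paper's resolution is not a Hadamard or Gram--Schmidt chain argument but rather a \emph{log-concavity} statement for the cyclic products $I_\ell := \prod_{j=1}^n G(v_j,\ldots,v_{j+\ell-1})$: inequality \eqref{splitfactor} gives $G(v_j,\ldots,v_{j+\ell-1})\,G(v_{j+1},\ldots,v_{j+\ell}) \geq G(v_{j+1},\ldots,v_{j+\ell-1})\,G(v_j,\ldots,v_{j+\ell})$, and taking a cyclic product yields $I_\ell^2 \geq I_{\ell-1}I_{\ell+1}$, hence $I_\ell \geq (I_{\ell-1})^s\,G(v_1,\ldots,v_n)^{s+(1-s)\ell}$ for all $s \leq (n-\ell)/(n-\ell+1)$ (inequality \eqref{allG}). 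Setting $s = (p'-\ell-1)/(p'-\ell)$ in \eqref{pprod} converts this directly into the uniform bound on $\prod_j A_j$, with the admissible range $s \leq (n-\ell)/(n-\ell+1)$ translating precisely into $p' \leq n+1$. The monotone ``distance to span of all other columns'' idea you sketch does not by itself encode the sharp exponent $(n-\ell)/(n-\ell+1)$, so you would need to verify whether your cyclic product bound really closes; the log-concavity mechanism is the efficient path.

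Your necessity argument is in fact incorrect. Diagonal matrices $W = \mathrm{diag}(a_1,\ldots,a_n)$ with $\prod a_i = 1$ give $\prod_j A_j(W)$ exactly constant, independent of the $a_i$: with orthogonal columns, $\prod_j D_j^{\ell-p'} = |\prod_i a_i|^{\ell-p'} = 1$ and $\prod_j V_j = |\prod_i a_i|^{\ell-1} = 1$ by periodicity of the index sum, so no blow-up occurs. (This is already visible at $n=\ell=2$, where $A_1 A_2$ is constant for all diagonal $W$.) The paper's witnessing example for the sharpness of the exponent in \eqref{allG}, and hence for $[[T]]=\infty$ when $p < (n+1)/n$, is the sheared family $v_1 = N^{n-1}e_1$, $v_i = N^{n-1}e_1 + N^{-1}e_i$ for $i\geq 2$, which has unit determinant but is emphatically not diagonal; the near-collinearity of all rows in a single direction is essential.

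Finally, the restricted strong-type conclusion at $p = \ell/(\ell-1)$ is not obtained by interpolating a strong bound against a weak one (that would not upgrade to restricted strong type). The paper instead uses the sublevel-set inequality $|\{t : \|R_{j+\ell}+\sum_i t^i R_{j+i}\| \leq \epsilon\}| \leq C_\ell G(R_{j+1},\ldots,R_{j+\ell-1})^{-1/2}\epsilon^{\ell-1}$, takes a cyclic product, applies $I_\ell \geq G(R_1,\ldots,R_n)^\ell = 1$, and feeds the result into Theorem \ref{restrictstrongtype} with the corresponding $j$ placed in $J_2$; the restricted-strong-type machinery is built into that theorem and does not need a separate interpolation step.
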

The inequality \eqref{examp1} may be viewed as a ``Radon-Brascamp-Lieb inequality,'' combining features of both Radon-like transforms and Brascamp-Lieb inequalities. A particularly interesting feature of \eqref{examp1} is that applying classical Brascamp-Lieb inequalities and known inequalities for convolution with affine hypersurface measure on the paraboloid in $\R^\ell$ establish \eqref{examp1} when $p = (\ell+1)/\ell$, but fail to explain why the inequality \eqref{examp1} must be true for the remaining ranges of $p$. In particular, even in the case $\ell = n$, the inequality \eqref{examp1} holds for a broader range of $p$ than the convolution inequality and H\"{o}lder's inequality combined would otherwise suggest.
Thus in some sense, the inequality \eqref{examp1} necessarily depends on some deeper interplay between the transversality and curvature properties of the relevant objects than can be understood through a naive approach.

\subsection{Notation} 
Although defining functions $\pi(x,y)$ will be essentially ubiquitous throughout this paper, there will be only a few specific points at which it is necessary to consider the simultaneous dependence of $\pi$ on both $x$ and $y$. As a consequence, it will be convenient in most cases to use notation which suppresses dependence on one or the other of the two ``sides'' of $\pi(x,y)$ and to think primarily in terms of one-sided computations and parametrized perturbations of one-sided objects. To be specific, supposing that $\pi$ is some smooth map from an open subset of $\R^n$ into $\R^k$, the notation $D \pi(x)$ will be used to indicate the Jacobian matrix of $\pi$ at a point $x$ in the standard coordinates, i.e., when $x := (x^1,\ldots,x^n)$ and $\pi(x) = (\pi^1(x),\ldots,\pi^k(x))$, then
\begin{equation} D \pi(x) := \begin{bmatrix} \frac{\partial \pi^1}{\partial x^1}(x) & \cdots & \frac{\partial \pi^1}{\partial x^n}(x) \\ \vdots & \ddots & \vdots \\ \frac{\partial \pi^k}{\partial x^1}(x) & \cdots & \frac{\partial \pi^k}{\partial x^n} (x) \end{bmatrix}. \label{jacobiandef} \end{equation}
When the point $x$ at which $D \pi(x)$ is to be calculated is clear from context, the shorthand notation $D \pi$ will be used.
If $\pi$ is defined on an open subset of a product space like $\R^{n} \times \R^{n'}$, the notation $D_x \pi(x,y)$ or $D_1 \pi(x,y)$ will indicate the $k \times n$ Jacobian matrix of $\pi$ with respect to the $x$ variables only (regarding $y$ as fixed); likewise $D_y \pi(x,y)$ and $D_2 \pi(x,y)$ both refer to the $k \times n'$ Jacobian matrix of $\pi$ with respect to the $y$ variables only. As before, the pair $(x,y)$ may in some cases be omitted when it is clear from context.

The notation $d \pi(x)$ indicates the $k$-fold wedge product $d \pi^1 \wedge \cdots \wedge d \pi^k$ at $x$:
\[ d \pi(x) := \bigwedge_{i=1}^k \left( \sum_{j=1}^n \frac{\partial \pi^i}{\partial x^j} (x) dx^j \right). \]
Just like above, when $\pi$ depends on multiple distinct groups of variables, notation like $d_x \pi(x,y)$ or $d_1 \pi |_{(x,y)}$ indicates that the $x$ or first collection of variables should be used for differentiation. This $d \pi(x)$ will also be regarded as a $k$-linear functional on vectors in $\R^n$ whenever it is convenient to do so: the symbol $d \pi|_x ( v_1,\ldots,v_k)$ indicates the result of evaluating $d \pi(x)$ on the $k$-tuple of vectors $v_1,\ldots,v_k$, i.e., if $v_i$ has coordinates $(v_{i}^1,\ldots, v_{i}^n)$ in the standard basis, then
\[ d \pi |_x(v_1,\ldots,v_k) := \det \left[ \sum_{\ell=1}^n \frac{\partial \pi^i}{\partial x^\ell} v_{j}^\ell \right]_{i,j = 1,\ldots,k}. \]
Note that one can also evaluate $d \pi |_x (v_1,\ldots,v_n)$ as $\det  (D \pi(x) V)$, where $V$ is the $n \times k$ matrix of coordinates of the vectors $v$, i.e., the row $\ell$, column $j$ entry of $V$ is simply $v_{j}^\ell$.

As already noted, it will be important to quantify the size of $d \pi(x)$ in essentially arbitrary local coordinate systems. In analogy with the definition already given, when $\omega_1^x,\ldots,\omega_n^x$ are pointwise linearly-independent vector fields on $\R^n$, define
\[ ||d \pi(x)||_{\omega^x} := \sqrt{ \frac{1}{k!} \sum_{i_1=1}^n \cdots \sum_{i_k=1}^n \left| d \pi |_x (\omega_{i_1}^x,\ldots, \omega_{i_k}^x) \right|^2}. \]
When no $\omega$ is specified, the notation $||d \pi(x)||$ indicates that the standard basis vectors on $\R^n$ should be used at every point.

\subsection{About the organization of this paper}

The proofs of Theorems \ref{mainthm} and \ref{multiobj} are divided into several stages. Section \ref{computevissec} provides some important identities regarding $d_x \pi$ (found in Section \ref{computesubsec}) and a proof of Lemma \ref{guthlemma}, which is the main technical lemma driving the sufficiency direction of Theorem \ref{mainthm}. This is the lemma which is based on Guth's visibility lemma; the lemma itself is formulated in such a way that  direct considerations of visibility can be confined exclusively to Section \ref{vissubsec}. The same is true of the algebraic constraint that $\pi_j(\cdot,y_j)$ be given by polynomial functions for each $j$---this assumption plays a role in the arguments of Section \ref{vissubsec} but is largely irrelevant elsewhere. 

Section \ref{possec} provides the proof of boundedness (i.e., the finiteness of $||T||$ in \eqref{bddness}) under the finiteness assumption on \eqref{testing2} and the algebraic assumption on the $\pi_j$. This is accomplished in two stages. The first stage is to establish Theorem \ref{main1} in Section \ref{proofsec1}, which is in some sense an analogue of Zhang's variety version of Brascamp-Lieb \cite{zhang2018}*{Theorem 8.1}. The main difference is that one \textit{does not} invoke the Brascamp-Lieb inequality, but rather uses an approach similar to Zhang's to estimate a more general object, which is called $Q(f_1,\ldots,f_m)$ in Theorem \ref{main1}. It turns out that this more abstract quantity $Q(f_1,\ldots,f_m)$ is often effectively larger than what would result from the Brascamp-Lieb power weight of Zhang's Theorem 8.1, particularly in the presence of curvature. The proof of \eqref{bddness} from \eqref{testing2} and the algebraic assumption on the $\pi_j$ is itself accomplished in Section \ref{mainpossec} as a consequence of Theorem \ref{restrictstrongtype}, which is a local version of the sufficiency portion of Theorem \ref{mainthm}, expanded to include the additional features of restricted strong-type inequalities and local estimates off the scaling line \eqref{scaling}.

Section \ref{examplesec} explores several applications: a corollary of Theorem \ref{mainthm} in the spirit of Stein's program to quantify the $L^p$-improving properties of convolution with singular measures \cite{stein1976}, a fractional integration-type result based on Theorem \ref{mainthm}, and Theorem \ref{multiobj} itself (proved under the assumption that Theorem \ref{mainthm} has been fully established). The nature of Theorem \ref{mainthm} means that the proof of Theorem \ref{multiobj} reduces to the analysis of the quantity \eqref{testing2}. The necessary inequalities for determinants are established in Section \ref{detsec} and the remainder of the proof of Theorem \ref{multiobj} appears in Section \ref{intsec}. Section \ref{mixnorm} repurposes some computations from Section \ref{multiobj} to establish an endpoint restricted strong-type mixed norm inequality for convolution with affine hypersurface measure on the paraboloid.

Section \ref{necessarysec} contains the proof of necessity of \eqref{testing} under the assumption that \eqref{bddness} holds. The proof is essentially a careful quantitative analysis of certain optimized Knapp-type examples.

Finally, Section \ref{appendix} is an appendix which establishes a number of background results \textit{a la} geometric measure theory concerning the behavior of smooth incidence relations and smooth perturbations of incidence relations. Readers will likely find the results of this section to be minor variations on existing results, but their proofs have been included for completeness, due to the fact that the somewhat qualitative nature of the definition \eqref{sigmadef} means that many of these elementary facts do not quite follow trivially from nice existing versions of the coarea formula, etc.

\section{Basic computations and visibility}
\label{computevissec}
\subsection{Initial computations regarding $d_x \pi$}
\label{computesubsec}
This section contains two very basic computations concerning $d \pi$ which will be used repeatedly throughout the remainder of this paper. Both deal with alternate ways of calculating the magnitude of or generally understanding the nature of $d_x \pi(x,y)$ for some smooth incidence relation $(\Omega,\pi,\Sigma)$.
\begin{proposition}
Suppose $(\Omega,\pi,\Sigma)$ is a smooth incidence relation on $\R^{n} \times \R^{n'}$ of codimension $k$.  At every point $(x,y) \in \Sigma$, \label{matprop}
\begin{equation} ||d_x \pi(x,y)|| = \sqrt{\det (D_x \pi(x,y)) (D_x \pi(x,y))^T} \label{equality1} \end{equation}
and
\begin{equation} ||d_y \pi(x,y)|| = \sqrt{\det (D_y \pi(x,y)) (D_y \pi(x,y))^T}. \label{equality2} \end{equation}
\end{proposition}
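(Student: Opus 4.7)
The plan is to prove equality \eqref{equality1}; the identity \eqref{equality2} follows by the same argument with the roles of $x$ and $y$ interchanged. Unfolding the definition of $||d_x \pi(x,y)||$ with $\omega$ equal to the standard basis $e_1,\ldots,e_n$ of $\R^n$ gives
\[ ||d_x \pi(x,y)||^2 = \frac{1}{k!} \sum_{i_1,\ldots,i_k = 1}^n \left| d_x \pi|_{(x,y)}(e_{i_1},\ldots,e_{i_k}) \right|^2. \]
The first key step is to recognize each summand as the square of a $k \times k$ minor of the Jacobian $D_x \pi(x,y)$: by the formula $d \pi|_x(v_1,\ldots,v_k) = \det(D\pi \cdot V)$ noted in the notation section, $d_x \pi|_{(x,y)}(e_{i_1},\ldots,e_{i_k})$ equals the determinant of the $k \times k$ submatrix of $D_x \pi(x,y)$ formed by extracting columns $i_1,\ldots,i_k$ (in that order).

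Next I would collapse the $n^k$-fold ordered sum to a sum over $k$-element subsets of $\{1,\ldots,n\}$. Whenever two of the indices $i_1,\ldots,i_k$ coincide, the submatrix has a repeated column and the determinant vanishes, so only those tuples with pairwise distinct indices contribute. Each unordered $k$-subset $I \subset \{1,\ldots,n\}$ corresponds to exactly $k!$ ordered tuples, and permuting the columns of the submatrix only changes the sign of the determinant; hence all $k!$ orderings yield the same squared value $(\det M_I)^2$, where $M_I$ denotes the $k \times k$ submatrix of $D_x \pi(x,y)$ with column set $I$. Consequently
\[ ||d_x \pi(x,y)||^2 = \sum_{I} (\det M_I)^2, \]
summed over all $\binom{n}{k}$ such subsets $I$.

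The final step is to invoke the Cauchy–Binet formula, which states precisely that $\det(A A^T)$ for a $k \times n$ matrix $A$ (with $k \leq n$) equals the sum of squares of the determinants of all $k \times k$ submatrices obtained by selecting $k$ columns of $A$. Applying this with $A = D_x \pi(x,y)$ yields $||d_x \pi(x,y)||^2 = \det\bigl( D_x \pi(x,y) (D_x \pi(x,y))^T \bigr)$, which is \eqref{equality1} after taking square roots (both sides are nonnegative). There is no substantive obstacle here—the assertion is a direct bookkeeping exercise combined with Cauchy–Binet—so the only care required is to verify the factor $1/k!$ is exactly compensated by the $k!$ orderings of each column subset and to confirm that the sign ambiguity in reordering columns is harmless under the absolute value.
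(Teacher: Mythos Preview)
Your proof is correct but takes a different route from the paper. You invoke the Cauchy--Binet formula directly after collapsing the ordered sum to a sum over $k$-subsets, whereas the paper gives a self-contained proof of the identity $\det M M^T = \frac{1}{k!}\sum_{i_1,\ldots,i_k}|M_{i_1\cdots i_k}|^2$ by showing that both sides are invariant under left and right multiplication by orthogonal matrices and then reducing via the Singular Value Decomposition to the case where $M$ has nonzero entries only on the diagonal, where both sides equal $\prod_i \sigma_i^2$. Your argument is shorter and entirely adequate for this proposition; the paper's approach has the minor advantage of being self-contained and of establishing along the way the orthogonal invariance of the quantity $\frac{1}{k!}\sum |M_{i_1\cdots i_k}|^2$, a computation that is reused shortly afterward in the paper's Proposition~\ref{formprop}.
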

\begin{proof}
The proof of the proposition consists entirely of a string of observations about matrices and has nothing in particular to do with the geometric structure of $\Sigma$.  For any $k \times n$ matrix $M$, let $M_{i_1 \cdots i_k}$ be given by
\[ M_{i_1 \cdots i_k} := \det \begin{bmatrix} M_{1 i_1} & \cdots & M_{1 i_k} \\ \vdots & \ddots & \vdots \\ M_{k i_1} & \cdots & M_{k i_k} \end{bmatrix} \]
for any $i_1,\ldots,i_k \in \{1,\ldots,n\}$. It suffices to show that
\begin{equation} \det M M^T = \frac{1}{k!} \sum_{i_1=1}^{n} \cdots \sum_{i_k=1}^n \left| M_{i_1 \cdots i_k} \right|^2. \label{genmat} \end{equation}
Once this is established, the identities \eqref{equality1} and \eqref{equality2} follow by taking $M := D_x \pi$ and $M := D_y \pi$, respectively.

To begin, observe that both sides of \eqref{genmat} are unchanged when $M$ is replaced by $O_1 M$ for any $k \times k$ orthogonal matrix $O_1$: on the left-hand side this is because $\det (O_1 M)(O_1M)^T = (\det O_1) (\det M M^T) (\det O_1^T)= \det M M^T$, and on the right-hand side it is because $(O_1 M)_{i_1\cdots,i_k} = (\det O_1) M_{i_1 \cdots i_k} = \pm M_{i_1 \cdots i_k}$.  It is also the case that replacing $M$ by $M O_2$ for any $n \times n$ orthogonal matrix $O_2$ preserves both sides of \eqref{genmat}. This is more immediate to verify for the left-hand side because $(M O_2) (M O_2)^T = M (O_2 O^T_2) M^T = M M^T$. The computation for the right-hand side is a bit lengthier; to simplify, the subscript of $O_2$ will be temporarily suppressed. Substituting $MO$ in the place of $M$ on the right-hand side of \eqref{genmat} gives
\begin{align*}
\frac{1}{k!} & \sum_{i_1=1}^n \cdots \sum_{i_k=1}^n | (MO)_{i_1 \cdots i_k}|^2  = \frac{1}{k!} \sum_{i_1, \ldots, i_k} \left| \sum_{j_1,\ldots,j_k} M_{j_1 \cdots j_k} O_{j_1 i_1} \cdots O_{j_k i_k} \right|^2 \\
& = \frac{1}{k!} \sum_{i_1,\ldots,i_k} \sum_{j_1,\ldots,j_k} \sum_{j'_1,\ldots,j'_k} M_{j_1 \cdots j_k} M_{j_1' \cdots j_k'} O_{j_1 i_1} \cdots O_{j_k i_k} O_{j'_1 i_1} \cdots O_{j'_k i_k}.
\end{align*}
Summing over $i_1,\ldots,i_k$ first simplifies the expression significantly because 
\[ \sum_{i} O_{j i} O_{j' i} = \delta_{j j'} \]
for every pair $j,j' \in \{1,\ldots,n\}$, where $\delta$ is the Kronecker delta. Therefore
\[ \frac{1}{k!}  \sum_{i_1=1}^n \cdots \sum_{i_k=1}^n | (MO)_{i_1 \cdots i_k}|^2  = \frac{1}{k!} \sum_{j_1=1}^n \cdots \sum_{j_k=1}^n |M_{j_1 \cdots j_k} |^2 \]
as asserted. Now by the Singular Value Decomposition, there exist orthogonal matrices $O_1$ and $O_2$ such that $O_1 M O_2$ has its only nonzero entries on the diagonal. Let $\sigma_i$ denote the $i$-th diagonal entry of this matrix. Then clearly
\[ \det M M^T = \det (O_1 M O_2) (O_1 M O_2)^T = \prod_{i=1}^k \sigma_i^{2} \]
because $O_1 M O_2 (O_1 M O_2)^T$ is itself a diagonal matrix whose $i$-th diagonal entry is $\sigma_i^2$.
Similarly
\begin{equation} \frac{1}{k!} \sum_{j_1,\ldots,j_k} |M_{j_1 \cdots j_k}|^2 = \frac{1}{k!} \sum_{i_1 \cdots i_k} | (O_1 M O_2)_{i_1 \cdots i_k}|^2 = \prod_{i=1}^{k} \sigma_i^{2} \label{reusecalc} \end{equation}
because $(O_1 M O_2)_{i_1 \cdots i_k}$ vanishes unless $(i_1,\ldots,i_k)$ is a permutation of $(1,\ldots,k)$, in which case it equals $\prod_{i=1}^k \sigma_i$. Thus \eqref{genmat} is true and \eqref{equality1} and \eqref{equality2} follow.
\end{proof}

The second and final basic proposition to be proved at this point provides a dictionary of sorts to translate the notation of this paper into the form used by Zhang \cite{zhang2018}, which will become relevant shortly.
\begin{proposition}
Let $(\Omega,\pi,\Sigma)$ be a smooth incidence relation on $\R^{n} \times \R^{n'}$ of codimension $k$. Then at each point $x \in \ri{y}$, \label{formprop}
\begin{equation}
\frac{d_x \pi(x,y)}{||d_x \pi(x,y)||} \label{normalform}
\end{equation}
equals a $k$-fold wedge product $\omega_1^* \wedge \cdots \wedge \omega_k^*$, where the covectors $\omega_1^*,\ldots,\omega_k^*$ are orthonormal and annihilate the tangent space of $\ri{y}$ at $x$. In the notation of Zhang \cite{zhang2018}, this means that \eqref{normalform} equals $(T_x \ri{y})^{\perp}$ up to a factor of $\pm 1$. 
\end{proposition}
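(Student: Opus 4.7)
The plan is to reduce the proposition to a QR-type decomposition of the Jacobian matrix $D_x \pi(x,y)$ and reinterpret the decomposition at the level of differential forms. First, since $(x,y) \in \Sigma$ forces $||d_x \pi(x,y)|| > 0$, Proposition \ref{matprop} gives $\det(D_x \pi)(D_x \pi)^T > 0$, so $D_x \pi(x,y)$ has full rank $k$. The implicit function theorem then shows $\ri{y}$ is a smooth embedded submanifold of $\R^n$ near $x$ with tangent space $T_x \ri{y} = \ker D_x \pi(x,y)$.

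Second, apply Gram-Schmidt to the $k$ rows of $D_x \pi$ (viewed as vectors in $\R^n$): this produces a $k \times n$ matrix $\Omega$ with orthonormal rows $\omega_1^*,\ldots,\omega_k^*$ spanning the row space of $D_x \pi$, together with an invertible $k \times k$ matrix $A$ such that $D_x \pi = A \Omega$ and $\Omega \Omega^T = I_k$. Reading off rows, each covector $d \pi^i = \sum_{j=1}^k A_{ij} \omega_j^*$, so multilinearity of the wedge product yields
\[ d_x \pi(x,y) = d\pi^1 \wedge \cdots \wedge d\pi^k = (\det A) \, \omega_1^* \wedge \cdots \wedge \omega_k^*. \]

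Third, Proposition \ref{matprop} combined with the factorization gives
\[ ||d_x \pi(x,y)||^2 = \det (D_x \pi)(D_x \pi)^T = \det (A \Omega \Omega^T A^T) = (\det A)^2, \]
so dividing produces $\frac{d_x \pi(x,y)}{||d_x \pi(x,y)||} = \operatorname{sgn}(\det A)\, \omega_1^* \wedge \cdots \wedge \omega_k^*$. The stray sign is absorbed by replacing $\omega_1^*$ with $-\omega_1^*$ if necessary, which preserves both orthonormality and the annihilation property below. For the annihilation claim, note that the row space of $D_x \pi$ coincides with $\operatorname{span}\{\omega_1^*,\ldots,\omega_k^*\}$, and $v \in T_x \ri{y} = \ker D_x \pi$ if and only if $\omega_j^*(v)=0$ for every $j$; thus the $\omega_j^*$ are exactly an orthonormal basis for the annihilator of $T_x \ri{y}$, identifying $\omega_1^* \wedge \cdots \wedge \omega_k^*$ with the unit $k$-covector representing $(T_x \ri{y})^\perp$ in Zhang's notation up to sign.

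There is no real obstacle here beyond careful bookkeeping: the entire proposition is a restatement of the QR factorization of $D_x \pi$ in the language of exterior algebra, and the computation of $||d_x \pi||$ via Proposition \ref{matprop} ensures the normalization matches exactly. The only mildly delicate point is the sign of $\det A$, which is cosmetic.
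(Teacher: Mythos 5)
Your proof is correct and follows essentially the same strategy as the paper's: both factor $D_x \pi$ via an orthogonalization of its rows (you use Gram-Schmidt/QR, the paper uses an SVD-based orthogonal transformation to first produce mutually orthogonal rows and then normalizes), deduce $d_x\pi = (\det A)\,\omega_1^*\wedge\cdots\wedge\omega_k^*$ by multilinearity of the wedge, and divide out the determinant factor using Proposition \ref{matprop}. The only cosmetic difference is that with Gram-Schmidt $A$ is automatically triangular with positive diagonal so $\det A > 0$ and no sign correction is actually needed, whereas the paper's route naturally leaves a $\pm$ (which the statement tolerates anyway).
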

\begin{proof}
This result is closely related to Proposition \ref{matprop}.
Consider the Jacobian
\[ D_x \pi(x,y) = \begin{bmatrix} \frac{\partial \pi^1}{\partial x^1} (x,y) & \cdots & \frac{\partial \pi^1}{\partial x^n} (x,y) \\ \vdots & \ddots & \vdots \\  \frac{\partial \pi^k}{\partial x^1} (x,y) & \cdots & \frac{\partial \pi^k}{\partial x^n} (x,y) \end{bmatrix}. \]
By the Singular Value Decomposition, there is a $k \times k$ orthogonal matrix $O$ such that the row vectors
\[ R_i := \left( \sum_{j=1}^k O_{ij} \frac{\partial \pi^{j}}{\partial x^{\ell}} (x,y) , \ldots, \sum_{j=1}^{k} O_{ij} \frac{\partial \pi^j}{\partial x^n} (x,y) \right) \]
are pairwise orthogonal. Consequently 
\begin{equation}
\begin{split} \det \left[ (D_x \pi(x,y)) (D_x \pi (x,y))^T \right] & = \det \left[ O (D_x \pi(x,y)) (D_x \pi (x,y))^T O^T \right] \\ & = \prod_{i=1}^k ||R_i||^2 \end{split} \label{perpcomp} \end{equation}
because $(O D_x \pi) (O D_x \pi)^T$ is simply a diagonal matrix whose $i$-th diagonal entry is exactly $||R_i||^2$.
For each $i=1,\ldots,n$, let
\[ r_i^* := \sum_{j=1}^k \sum_{\ell=1}^{n} O_{ij} \frac{\partial \pi^j}{\partial x^\ell} d x^\ell. \]
The covectors $r_1^*,\ldots,r_k^*$ are pairwise orthogonal with $||r_i^*|| = ||R_i||$ for each $i$.
Observe also that $r_1^* \wedge \cdots \wedge r_k^* = (\det O) (d \pi^1 (x,y) \wedge \cdots \wedge d \pi^k(x,y))$. This is true because the map
\[ M \mapsto \left( \sum_{j=1}^k \sum_{\ell=1}^{n}  M_{1j} \frac{\partial \pi^j}{\partial x^\ell} d x^{\ell} \right) \wedge \cdots \wedge \left( \sum_{j=1}^k \sum_{\ell=1}^{n}  M_{kj} \frac{\partial \pi^j}{\partial x^\ell} d x^{\ell} \right) \]
is an alternating $k$-linear functional of the rows of $M$. Since scalar-valued alternating $k$-linear functionals on $\R^k$ are unique up to a scalar multiple, it follows by writing the coordinates of the above $k$-form in the standard basis that
\begin{align*}
 \left( \sum_{j=1}^k \sum_{\ell=1}^{n}  M_{1j} \frac{\partial \pi^j}{\partial x^\ell} d x^{\ell} \right) \wedge \cdots \wedge \left( \sum_{j=1}^k \sum_{\ell=1}^{n}  M_{kj} \frac{\partial \pi^j}{\partial x^\ell} d x^{\ell} \right) &  \\ = (\det M) (d_x \pi^1(x,y) \wedge  \cdots \wedge d_x \pi^k(x,y)) & = (\det M) d_x \pi(x,y)
 \end{align*}
 (because each coefficient of the $k$-fold wedge product in the standard basis is a scalar alternating $k$-linear functional of the rows of $M$ and so equals $\det M$ times its value when computed on the identity matrix).
 To finish, if one fixes $\omega_i^* := r_i^* / ||R_i||$, then it follows that the $\omega_i^*$ are now orthonormal covectors, each of which annihilates all vectors tangent to $\ri{y}$ at the point $x$, and
 \[ \frac{d_x \pi^1 (x,y) \wedge \cdots \wedge d_x \pi^k(x,y)}{||d_x \pi(x,y)||} = \pm \frac{r_1^* \wedge \cdots \wedge r_k^*}{||R_1|| \cdots ||R_k||} = \pm \omega_1^* \wedge \cdots \wedge \omega_k^*, \]
where \eqref{perpcomp} and \eqref{equality1} are used to establish that $||d_x \pi(x,y)|| = ||R_1|| \cdots ||R_k||$.
 In other words, one has
\[ \frac{d_x \pi(x,y)}{||d_x \pi(x,y)||} = \pm (T_x \ri{y} )^{\perp} \]
in the notation of Zhang \cite{zhang2018}*{p. 557}.
\end{proof}

\subsection{Visibility considerations and Lemma \ref{guthlemma}} 
\label{vissubsec} 
The stage is now set to prove the main lemma behind the sufficiency of \eqref{testing2} with regard to Theorem \ref{mainthm}. As mentioned earlier, this key lemma is primarily a reformulation of Guth's visibility lemma \cite{guth2010}, originally developed to prove the endpoint case of the Multilinear Kakeya conjecture  (previously formulated and proved up to the endpoint by Bennett, Carbery, and Tao \cite{bct2006}). Carbery and Valdimarsson \cite{cv2013} provide a very nice alternate proof of the lemma which is based on a variation of the Borsuk-Ulam Theorem and avoids a number of advanced tools from algebraic topology which featured in Guth's original approach. While these lemmas certainly involve visibility in a rather direct way, in the context of the present paper, it is perhaps misleading to think of them as being ``about'' visibility because the version of the lemma recorded below is, in fact, easiest to state without any reference to the notion of visibility at all. At its heart, the lemma below is a geometric lemma about the behavior of integrals of $||d \pi||_{\omega^x}$ along slices $\Sigma_\pi$.

\begin{lemma}
For any dimension $n \geq 1$, there exists a constant $C_n$ such that the following holds: for any positive integer $R$ and any Borel measurable, nonnegative integrable function $\psi$ on the box $B_R := [-R,R)^n$, there exist Borel measurable $\R^n$-valued functions $\omega_1^x,\ldots,\omega_n^x$ on $B_R$ (i.e., measurable vector fields) such that \label{guthlemma}
\begin{equation}
|\det \{\omega_i^x\}_{i=1}^n | = 1 \text{ for every } x \in B_R,
\end{equation}
($\det \{\omega_i^x\}_{i=1}^n$ is the determinant of the $n \times n$ matrix whose columns are $\omega_1^x,\ldots,\omega_n^x$ expressed in standard coordinates) and a nonnegative Borel-measurable function $\widetilde \psi$ on $B_R$ equal to $\psi$ almost everywhere such that every polynomial map $\pi : \R^n \rightarrow \R^{k}$ with $1 \leq k \leq n$ satisfies
\begin{equation}
\begin{split}
\int_{ \Sigma_\pi \cap B_R} \left[ \widetilde \psi(x) \right]^{\frac{n-k}{n}} &  || d \pi (x)||_{\omega^x} d \sigma(x) 
\leq C_n  (\deg \pi)   \left[ \int_{B_R} \psi(x) dx \right]^{\frac{n-k}{n}},
\end{split} \label{variety}
\end{equation}
where $\Sigma_\pi := \set{x \in \R^{n}}{ \pi(x) = 0, ||d_x \pi(x)|| > 0}$ and $d \sigma := \dH^{n-k} / ||d_x \pi||$.
 Here  $\deg \pi$ is the product of degrees of the coordinate functions of $\pi$, i.e., $\deg \pi := (\deg \pi^1) \cdots (\deg \pi^k)$.
\end{lemma}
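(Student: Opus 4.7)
This lemma is a weighted reformulation of Guth's visibility lemma \cite{guth2010}, and the plan is to prove it by adapting the Borsuk-Ulam argument of Carbery and Valdimarsson \cite{cv2013}. First, discretize: partition $B_R$ into unit subcubes $\{Q\}$, set $c_Q := |Q|^{-1}\int_Q \psi$, and define $\widetilde\psi$ to equal $c_Q$ on each $Q$. The identification $\widetilde\psi = \psi$ almost everywhere is recovered at the end by iterating the construction over a dyadic refinement and extracting a measurable limit via Lebesgue differentiation. The desired inequality then splits over $Q$:
\[ \sum_Q c_Q^{(n-k)/n} \int_{\Sigma_\pi \cap Q} \|d\pi\|_{\omega^x}\, d\sigma \leq C_n (\deg \pi) \Big(\sum_Q c_Q\Big)^{(n-k)/n}, \]
to be established uniformly in the polynomial $\pi$.

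Second, I would construct $\omega^x$ using only $\psi$, independent of $\pi$. For each $x \in B_R$, define a visibility functional $V(x,\theta)$ on unit vectors $\theta$ that accumulates $c_Q$-weight along the ray from $x$ in direction $\theta$ with a scale-covariant weighting (e.g.\ an inverse power of the distance to $x$). By a Borsuk-Ulam argument in the style of \cite{cv2013}, one extracts an orthonormal frame $e_1^x,\ldots,e_n^x$ on which $V(x,\cdot)$ is equidistributed up to a factor of $C_n$, so that $V(x,e_i^x) \sim (\sum_Q c_Q)^{1/n}$. The fields $\omega_i^x := \lambda_i^x e_i^x$ are then defined by choosing $\lambda_i^x > 0$ calibrated inversely to $V(x,e_i^x)$ in such a way that $\prod_i \lambda_i^x = 1$, ensuring $|\det\omega^x|=1$; Borel measurability in $x$ follows from a measurable selection of the Borsuk-Ulam output.

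Finally, apply the construction to $\Sigma_\pi$. By Proposition \ref{formprop}, $\|d\pi\|_{\omega^x}$ at $x \in \Sigma_\pi$ is the $k$-volume of the projection of the $(\omega_i^x)$-frame onto $(T_x\Sigma_\pi)^\perp$. Cauchy-Binet expands this into a sum over $k$-subsets $S\subseteq\{1,\ldots,n\}$ of $\prod_{i\in S}\lambda_i^x$ times a normalized cofactor. After integrating over $\Sigma_\pi \cap Q$ and using Bezout to bound the $(n-k)$-volume of $\Sigma_\pi \cap Q$ in each complementary projection direction by a constant multiple of $\deg\pi$, one arrives at expressions of the form $c_Q^{(n-k)/n}\prod_{i\notin S}V(x_Q, e_i^x)\, \lambda_i^x$ summed over $Q$ and $S$. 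The calibration of the $\lambda_i^x$ cancels the visibility factors, and a final application of H\"older's inequality over $Q$ collapses everything into the target right-hand side $(\sum_Q c_Q)^{(n-k)/n}$ with prefactor $C_n (\deg\pi)$.

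The principal obstacle is the calibration step: the visibility functional $V(x,\theta)$ must be constructed so that the Borsuk-Ulam equidistribution is simultaneously topologically available (through continuity in $\theta$) and quantitatively sharp enough to survive the Bezout count with constants depending only on $n$. This is the combinatorial heart of Guth's argument, and the new content in the weighted version is that the exponent $(n-k)/n$ on $\widetilde\psi$ must match exactly the dimension of $\Sigma_\pi$ in the constructed frame for the calibration to close; getting this matching right, rather than any algebraic-topological input, is where I expect the real work to lie.
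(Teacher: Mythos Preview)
Your proposal has a genuine gap at the point where you connect the visibility functional to the variety $\Sigma_\pi$. In the paper's proof (and in Guth's original argument), the crucial intermediary is an \emph{algebraic hypersurface} $Z$: Guth's visibility lemma produces a polynomial hypersurface of degree $\lesssim (\sum_Q M(Q))^{1/n}$ whose directional surface-area measure on each cube $Q$ has visibility at least $M(Q)$. The frame $\omega^x$ is then extracted from that surface-area measure (the paper's Proposition~\ref{basis}), and the bound on $\int_{\Sigma_\pi} \|d\pi\|_{\omega^x}\,d\sigma$ comes from Zhang's intersection estimate, which controls the wedge-product integral over $\Sigma_\pi \times Z^{n-k}$ by $\deg\pi \cdot (\deg Z)^{n-k}$; the factor $(\deg Z)^{n-k}$ is exactly what supplies $(\sum_Q c_Q)^{(n-k)/n}$ on the right-hand side. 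Your ray-accumulated functional $V(x,\theta)$ is not the normal measure of any algebraic object, so there is no B\'ezout-type count that sees both $\Sigma_\pi$ and the weights $c_Q$ at once. The step ``using B\'ezout to bound the $(n-k)$-volume of $\Sigma_\pi \cap Q$ in each complementary projection direction'' gives at best a Wongkew-type bound $\H^{n-k}(\Sigma_\pi\cap Q)\lesssim \deg\pi$, uniform over $Q$ and carrying no directional coupling to the $c_Q$; after that your closing H\"older step runs the wrong way, since $\sum_Q c_Q^{(n-k)/n}$ can exceed $(\sum_Q c_Q)^{(n-k)/n}$ by an arbitrarily large factor. The Borsuk--Ulam input in Carbery--Valdimarsson is used to \emph{build the polynomial} (a bisection argument), not to equidistribute a frame directly, so invoking it here does not bypass the need for $Z$.

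A second, smaller gap is the passage to $\widetilde\psi = \psi$ almost everywhere. Lebesgue differentiation of the cube-averages gives pointwise convergence of the averages, but the frames $\omega^x$ at successive dyadic scales have no reason to converge, and you need a single measurable frame that works simultaneously for \emph{every} polynomial $\pi$. The paper handles this by a layered approximation rather than a limit: for each $j$ it approximates a residual $\varphi_j$ by a locally constant $\psi_j$, defines $E_j$ to be the set where the approximation first succeeds, and assigns to $E_j$ the frame coming from scale $j-1$; summing the resulting geometric series in $j$ recovers the global bound with $\widetilde\psi := \psi\,\chi_{B_R\setminus E_\infty}$. This patching, not a limiting argument, is what actually delivers the a.e.\ identification.
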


As a brief aside, observe that the passage from $\psi$ to $\widetilde \psi$ in \eqref{variety} is necessary in general because the varieties $\Sigma_\pi$ are themselves sets of measure zero. If the function $\widetilde \psi$ were forced to equal $\psi$ everywhere, it would be possible, by adding a large multiple of $\chi_{\Sigma_\pi}$ to $\psi$ for some fixed $\pi$, to make the left-hand side of \eqref{variety} as large as desired without changing the right-hand side.

Before developing a full proof of the lemma, it is convenient to first handle the case $k=n$, as the only dependence of either side of \eqref{variety} on $\psi$ is through the vector fields $\{\omega_i^x\}_{i=1}^n$. It turns out that $||d \pi(x)||_{\omega^x}$ happens in this case to be independent of $\{\omega^x_i\}_{i=1}^n$. To see this, observe that
\[ || d \pi(x)||_{\omega^x}^2 = \frac{1}{n!} \sum_{i_1=1}^n \cdots \sum_{i_n=1}^n | d \pi|_x (\omega_{i_1}^x,\ldots,\omega_{i_n}^x)|^2 = |d \pi|_x(\omega^x_{1},\ldots,\omega^x_n)|^2. \]
As $d \pi$ is an alternating $n$-linear functional on $\R^n$, $|d \pi|_x(\omega^x_{1},\ldots,\omega^x_n)|$ must simply equal $|\det \{\omega_i^x\}_{i=1}^n|$ times some $x$-dependent function which is otherwise independent of $\{\omega^x_i\}_{i=1}^n$.  Since $|\det \{\omega_i^x\}_{i=1}^n|$ is constrained to equal $1$ everywhere, it follows that $||d \pi(x)||_{\omega^x} = ||d \pi(x)||$ at every $x$ (i.e., the value does not change when $\{\omega^x_i\}_{i=1}^n$ is replaced by the standard basis). This means that $||d \pi(x)||_{\omega^x} d \sigma$ is just $\dH^0$, i.e., counting measure. Therefore the left-hand side of \eqref{variety} simply counts nondegenerate solutions of the equation $\pi(x) = 0$ inside $B_R$ (it counts only nondegenerate solutions because $\Sigma_\pi$ contains only points where the Jacobian $\partial \pi / \partial x$ is nonsingular). B\'{e}zout's Theorem \cite{fulton1984}*{Chapter 8, Section 4} gives an upper bound of $\deg \pi$ for the number of such points (note that the complex version of the B\'{e}zout's Theorem which counts irreducible components of the solution variety is sufficient in this real setting because real nondegenerate solutions are also irreducible components of the solution variety over $\mathbb C$).

The rest of the proof of Lemma \ref{guthlemma} requires additional terminology and a few auxiliary propositions.
Let $\mu$ be any finite positive measure on $\R^n$. The fading zone $F(\mu)$ of this measure will be defined to equal the symmetric convex set
\begin{equation} F(\mu) := \set{ u \in \R^n}{ \int |u \cdot y^*| d \mu(y^*) \leq 1} \label{fadedef} \end{equation}
and the visibility $\vis(\mu)$ defined to equal
\begin{equation} \vis(\mu) = \frac{1}{|F (\mu)|} \label{visdef} \end{equation}
where $|\cdot|$ indicates Lebesgue measure. This definition deviates in a small but crucial way from that of other authors in that the fading zone is not assumed to be a subset of the unit ball. This makes the fading zone formally larger than the object of the same name considered elsewhere and consequently makes the visibility, as defined here, formally smaller than its standard counterpart.

Two propositions upon which Lemma \ref{guthlemma} rests are given below. The proof of Lemma \ref{guthlemma} will proceed immediately after the statement of both propositions. Once completed, the propositions themselves will be proved.
\begin{proposition}
Let $R$ and $B_R$ be as in Lemma \ref{guthlemma}.
Suppose $\delta := 2^{-j}$ for some nonnegative integer $j$, and let $\Lambda_\delta$ be the collection of boxes $[j_1 \delta,(j_1 + 1)\delta) \times \cdots \times [j_n \delta, (j_n+1) \delta)$ contained in the large box $B_R$, where $j_1,\ldots,j_n \in \Z$. 
\label{visibility}
Suppose $\psi$ is a function on the box $B_R$ which is nonnegative, constant on every box $Q \in \Lambda_\delta$, and not identically zero. For each $x \in B_R$, there is a finite positive measure $\mu^x$ on the unit sphere ${\mathbb S}^{n-1}$ such that
\begin{equation}
 \vis(\mu^x) \geq \psi(x) \text{ and } \label{visbound}
 \end{equation}
\begin{equation}
\int |u \cdot y^*| d \mu^x(y^*) \geq  \frac{||u||}{2R} \left(\int_{B_R} \psi(x) dx\right)^{\frac{1}{n}} \text{ for all } u \in \R^n. \label{bignorm0}
\end{equation}
The measures $\mu^x$ are constant as a function of $x$ on every box $Q \in \Lambda_\delta$, and for all polynomial maps $\pi : \R^n \rightarrow \R^k$ with $1 \leq k < n$,
\begin{equation}
\begin{split}
\int_{\Sigma_\pi} \left[ \int | d \pi (x) \wedge y_1^* \wedge \cdots \wedge y_{n-k}^*| d \mu^x(y_1^*) \cdots d \mu^x(y_{n-k}^*) \right] & d \sigma(x) \\
\leq C_n  \deg \pi
 \left[ \int_{B_R} \psi(x) dx \right]^{\frac{n-k}{n}} &\end{split} \label{whatisit}
\end{equation}
for some constant $C_n$ depending only on $n$, with $\Sigma_\pi$ and $d \sigma$ as in Lemma \ref{guthlemma}.
\end{proposition}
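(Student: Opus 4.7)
The plan is to follow the strategy of Guth's visibility lemma \cite{guth2010}, adapted to produce direction \emph{distributions} rather than single direction fields as in the classical setup. The argument has three stages: construct the measures $\mu^x$, verify (1)--(3), and prove the polynomial integral bound (4) via a coarea/B\'ezout argument. Because $\psi$ is constant on each $Q \in \Lambda_\delta$, it suffices to associate a single finite measure $\mu^Q$ on $\mathbb{S}^{n-1}$ to each cube. Following Guth's multiscale polynomial-partitioning approach, or equivalently the Borsuk--Ulam argument of Carbery--Valdimarsson \cite{cv2013}, I would build $\mu^Q$ as a sum of a \emph{local} piece of total mass $\sim \psi|_Q^{1/n}$ supported on roughly $n$ directions chosen to make the fading zone a parallelepiped of volume $\lesssim \psi|_Q^{-1}$, plus a \emph{background} piece of total mass $\sim (\int_{B_R}\psi)^{1/n}/R$ spread over $\mathbb{S}^{n-1}$ to enforce the uniform lower bound (2). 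The local direction choices must be made in a globally coherent way across cubes so that a single algebraic variety cannot receive fresh B\'ezout-type intersection counts in every cube it meets.

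With the construction in hand, (1) follows from a direct computation of the volume of $F(\mu^Q)$: the local piece produces a parallelepiped-like fading zone of volume $\lesssim \psi|_Q^{-1}$, yielding $\vis(\mu^Q) \geq \psi(x)$. Bound (2) follows directly from the background piece, which ensures the support function $u \mapsto \int |u\cdot y^*|d\mu^Q(y^*)$ satisfies the required spherical lower bound $|u|(\int \psi)^{1/n}/(2R)$. Measurability and constancy of $\mu^Q$ on each $Q \in \Lambda_\delta$ are automatic from the per-cube construction.

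The polynomial integral bound (4) is the main obstacle. For fixed $(y_1^*,\ldots,y_{n-k}^*)$, let $L : \R^n \to \R^{n-k}$ denote the linear map whose rows are the $y_i^*$. The coarea formula combined with Proposition \ref{matprop} yields, for any Borel $E \subseteq \R^n$,
\[ \int_{\Sigma_\pi \cap E} | d \pi(x) \wedge y_1^* \wedge \cdots \wedge y_{n-k}^*| d\sigma(x) = \int_{\R^{n-k}} \#(\Sigma_\pi \cap L^{-1}(z) \cap E)\, dz. \]
Each fiber $L^{-1}(z)$ is an affine $k$-plane, so B\'{e}zout's theorem (applied to the projective closure, exactly as in the $k=n$ warm-up already handled in the text) bounds the transverse fiber count by $\deg \pi$ outside a $z$-null set. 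The difficulty is converting this per-fiber control into the global integral estimate (4): a naive cube-by-cube H\"older argument yields an extraneous factor of $(R/\delta)^k$, so the correct aggregation must exploit the global coherence of the directions in $\mu^Q$ built in the construction, ensuring that any single $\Sigma_\pi$ does not contribute independent intersection counts in every cube it traverses. This is the crux of the argument, and it is where the Guth--Carbery--Valdimarsson topological input becomes indispensable.
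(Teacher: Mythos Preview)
Your proposal identifies the right ingredients but does not actually carry out the key step, and the construction you describe for $\mu^Q$ is not the one that makes the argument work.

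The paper does not build $\mu^Q$ as an ad hoc sum of a ``local piece on roughly $n$ directions'' plus an isotropic ``background piece.'' Instead it invokes Guth's visibility lemma as a black box: given the integer function $M(Q) := \lceil 1 + \psi|_Q \rceil$, Guth produces a single algebraic hypersurface $Z$ of degree $\lesssim (\sum_Q M(Q))^{1/n}$ whose mollified visibility in each cube $Q$ is at least $M(Q)$. One then adds Zhang's grid of coordinate hyperplanes $P$ to remove the truncation $\|u\|\le 1$ from Guth's definition of visibility, and defines $\mu_Q$ as the pushforward of $(n-1)$-dimensional Hausdorff measure on $(Z'\cup P)\cap Q$ under the Gauss map $z' \mapsto \widehat n(z')$, averaged over a small ball of hypersurfaces $Z'$ near $Z$. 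The added hyperplanes give \eqref{bignorm0} directly, and \eqref{visbound} follows because the fading zone of $\mu_Q$ sits inside the set defining mollified visibility.

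This is the point where your sketch breaks down. The ``global coherence'' you correctly flag as essential is not something you have to engineer by hand in the choice of directions: it is forced by the fact that all the $\mu_Q$ arise from \emph{one} algebraic hypersurface $Z\cup P$ of controlled degree. Because of this, the bracket in \eqref{whatisit} becomes an integral over $(n-k)$ copies of that hypersurface, and the whole left side is bounded by Zhang's intersection estimate \cite{zhang2018}*{Theorem 5.2}, which controls
\[
\int_{\Sigma_\pi}\int_{Z_1}\cdots\int_{Z_{n-k}} \chi_U\,\bigl|(T_x\Sigma_\pi)^\perp\wedge\widehat n(z_1)\wedge\cdots\wedge\widehat n(z_{n-k})\bigr|\,d\mathcal H^{n-1}\cdots d\mathcal H^{n-1}\,d\mathcal H^{n-k}
\]
by $|U|\deg\pi\prod\deg Z_i$. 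Your per-fiber B\'ezout computation is the $n{-}k{=}1$ shadow of this, but the aggregation you could not complete is exactly what Zhang's theorem supplies; without the measures coming from an algebraic variety there is no reason for it to hold. Finally, the case $\delta<1$ is handled by a straightforward rescaling $x\mapsto \delta x$, not by any modification of the construction.
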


\begin{proposition}
Let $\mu$ be a finite positive measure on $\R^n$ such that \label{basis}
\[  \int |u \cdot y^*| d \mu(y^*) \geq \psi_0 ||u|| \text{ for all } u \in \R^n, \]
where $|| \cdot ||$ is the standard Euclidean norm and $\psi_0 > 0$. There exists a basis $\{\omega_i\}_{i=1}^n$ of $\R^n$ for which $|\det \{\omega_i\}_{i=1}^n| = 1$ such that for any $k \in \{1,\ldots,n-1\}$ and any $k$-form $A^*$ on $\R^n$, regarded as an alternating $k$-linear functional,
\begin{align}
\left( \frac{2}{n} \right)^{n-k} \left( \vis \mu \right)^{\frac{n-k}{n}} & \max_{i_1,\ldots,i_k } | A^*(\omega_{i_1},\ldots,\omega_{i_k})| \nonumber \\
& \leq \int |A^* \wedge y_1^* \wedge \cdots \wedge y_{n-k}^*| d \mu(y_1^*) \cdots d \mu(y_{n-k}^*) \label{midineq}.
\end{align}
\end{proposition}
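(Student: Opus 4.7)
The plan is to build $\{\omega_i\}$ from a maximum-volume parallelepiped inscribed in the fading zone $F(\mu)$ and then establish the inequality by induction on $n-k$ using a ``boundary face'' argument. Since the hypothesis $\int |u \cdot y^*| d\mu \geq \psi_0 \|u\|$ makes $F(\mu)$ a compact symmetric convex body with nonempty interior, I pick $v_1, \ldots, v_n \in F(\mu)$ maximizing $V_0 := |\det(v_1, \ldots, v_n)|$. A standard replacement argument (substitute any $v_i$ by an arbitrary $u \in F(\mu)$ and invoke maximality) yields $F(\mu) \subset \{\sum_i c_i v_i : |c_i| \leq 1\}$ and $V_0 \geq |F(\mu)|/2^n$; expressing each point of the symmetric parallelepiped $\{\sum_i t_i v_i : |t_i| \leq 1\}$ as $n$ times a convex combination of $0, \pm v_1, \ldots, \pm v_n$ shows in turn that it lies in $nF(\mu)$, giving $V_0 \leq (n/2)^n |F(\mu)|$. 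Setting $\omega_i := V_0^{-1/n} v_i$ yields a unimodular basis; writing $P_\omega := \{\sum_i c_i \omega_i : |c_i| \leq 1\}$ and $c := (2/n)(\vis \mu)^{1/n}$, these two bounds on $V_0$ combine to give the key containment $c F(\mu) \subset V_0^{-1/n} F(\mu) \subset P_\omega$.

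By positive homogeneity I may assume $\max_I |A^*(\omega_I)| = 1$ is attained at some $I_0$, reducing the target to $\int |A^* \wedge y_1^* \wedge \cdots \wedge y_{n-k}^*| d\mu^{n-k} \geq c^{n-k}$. I would prove this by induction on $n-k$. In the base case $n-k=1$, $A^*$ is an $(n-1)$-form and the integrand equals $|y^* \cdot v|$ for $v = \sum_j \pm \alpha_j \omega_j$ with $\alpha_j := A^*(\omega_1, \ldots, \hat\omega_j, \ldots, \omega_n)$; the normalization $|\alpha_{j_0}|=1$ forces $v$ onto the face $|c_{j_0}|=1$ of $\partial P_\omega$, and since $cF(\mu) \subset P_\omega$ implies that no boundary point of $P_\omega$ lies in the interior of $cF(\mu)$, this yields $\|v\|_{F(\mu)} \geq c$. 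For the inductive step I rewrite $A^* \wedge y_0^* \wedge y_1^* \wedge \cdots \wedge y_{n-k-1}^* = (A^* \wedge y_0^*) \wedge y_1^* \wedge \cdots$ and apply the inductive hypothesis to the $(k+1)$-form $A^* \wedge y_0^*$, extracting a factor $c^{n-k-1}$ and reducing to $\int \max_{|I'|=k+1} |(A^* \wedge y_0^*)(\omega_{I'})| d\mu(y_0^*) \geq c$. Specializing $I' = I_0 \cup \{j\}$ for any chosen $j \notin I_0$ and using the shuffle formula gives $(A^* \wedge y_0^*)(\omega_{I_0 \cup \{j\}}) = y_0^* \cdot w_j$ with $w_j = \pm \omega_j + \sum_{i \in I_0} c_{ij} \omega_i$ and $|c_{ij}| = |A^*(\omega_{(I_0 \setminus \{i\}) \cup \{j\}})| \leq 1$; by the same boundary-face argument $\|w_j\|_{F(\mu)} \geq c$, so $\int |y_0^* \cdot w_j| d\mu(y_0^*) = \|w_j\|_{F(\mu)} \geq c$, completing the induction.

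The main obstacle the argument must navigate is the cancellation inherent in the wedge expansion $A^* \wedge y_1^* \wedge \cdots \wedge y_{n-k}^* = \sum_I \pm a_I \det[y_j^*(\omega_i)]_{i \in I^c}$: one cannot naively extract a single dominant $I_0$-term, since interference between the $I$'s could in principle collapse the integral. The maximal-determinant basis is precisely what tames this: every auxiliary vector $w_j$ appearing during the induction is constrained by the bound $|a_I| \leq 1$ onto a single boundary face of $P_\omega$, and the single geometric inclusion $cF(\mu) \subset P_\omega$ then supplies the uniform lower bound $\|w_j\|_{F(\mu)} \geq c$ with exactly the factor $(2/n)(\vis \mu)^{1/n}$ required by \eqref{midineq}; iterating $n-k$ times produces the overall constant $(2/n)^{n-k}(\vis \mu)^{(n-k)/n}$.
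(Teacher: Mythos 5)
Your proof is correct and rests on exactly the same geometric ingredients as the paper's: a maximal-volume parallelepiped inscribed in the fading zone, the sandwich of $F(\mu)$ between the inscribed cross-polytope and circumscribed parallelepiped (giving the $(2/n)(\vis\mu)^{1/n}$ constant), and induction on $n-k$. The organization of the induction is where you diverge. The paper works through the dual covector basis $\omega_i^*$: it establishes the John-type estimate $\frac{2}{n}(\vis\mu)^{1/n}\max_i|\omega_i^*\cdot v| \leq \|v\|_\mu$ once, then at each step peels off the \emph{last} variable $y_{n-k}^*$, realizes $A^*\wedge y_1^*\wedge\cdots\wedge y_{n-k-1}^*\wedge(\cdot)$ as $v\cdot(\cdot)$, bounds below by a maximum over $|A^*\wedge y_1^*\wedge\cdots\wedge\omega_i^*|$, and only then applies the inductive hypothesis to the fixed form $A^*\wedge\omega_i^*$; a final combinatorial identity converts $|A^*\wedge\omega_{i_1}^*\wedge\cdots\wedge\omega_{i_{n-k}}^*|$ to $|A^*(\omega_{j_1},\ldots,\omega_{j_k})|$. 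You instead keep everything in terms of $\omega_i$-coordinates and the parallelepiped $P_\omega$, apply the inductive hypothesis first to the varying form $A^*\wedge y_0^*$, and then pin down the remaining one-dimensional integral by specializing $I'=I_0\cup\{j\}$ and invoking the boundary-face argument. The two are equivalent up to duality, but your version avoids explicit use of the dual covectors $\omega_i^*$ and the complementary-index conversion at the end, which is arguably a cleaner bookkeeping; the paper's version factors the key geometric inequality \eqref{mainconvex} out once rather than reinvoking a face argument at each inductive step. One small item to flag: you should note the trivial case $A^*\equiv 0$ (where both sides vanish) before normalizing $\max_I|A^*(\omega_I)|=1$, and be explicit that $k<n$ guarantees a $j\notin I_0$ exists.
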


\begin{proof}[Proof of Lemma \ref{guthlemma}] 
Using these two propositions, the proof of Lemma \ref{guthlemma} is rather routine. 
The first step is to handle the case when $\psi$ is a locally constant function of the sort described in Proposition \ref{visibility}.  If $\psi$ is identically zero, \eqref{variety} is trivially satisfied with $C_n = 1$, $\widetilde \psi \equiv 0$ and $\{\omega_i^x\}_{i=1}^n$ is the standard basis at every point because both sides of \eqref{variety} will be zero when $k < n$ (and the case $k = n$ holds for reasons already identified).
Fixing $\psi$, the measures $\mu^x$ from Proposition \ref{visibility} are constant on cubes $Q \in \Lambda_\delta$. Since $\int_{B_R} \psi > 0$, \eqref{bignorm0} implies that Proposition \ref{basis} may be applied pointwise to these measures $\mu^x$ to give $x$-dependent vectors $\{\omega^x_i\}_{i=1}^n$ (which will also be constant on cubes in $\Lambda_\delta$). For any $x$, \eqref{midineq} implies by taking $A^* = d \pi(x)$ that
\begin{align*}
\int & |d \pi(x) \wedge y_1^* \wedge \cdots \wedge y_{n-k}^*| d \mu^x(y_1^*) \cdots d \mu^x(y_{n-k}^*) \\
& \geq \left( \frac{2}{n} \right)^{n-k} \left( \vis{\mu^x} \right)^{\frac{n-k}{n}} \max_{i_1 \cdots i_k} | d \pi(\omega_{i_1}^x,\ldots, \omega_{i_k}^x)| \\
& \geq \left( \frac{2}{n} \right)^{n-k} \left( \vis{\mu^x} \right)^{\frac{n-k}{n}} \sqrt{\frac{k!}{n^k}} \left( \frac{1}{k!} \sum_{i_1 \cdots i_k=1}^n | d \pi(\omega_{i_1}^x,\ldots, \omega_{i_k}^x)|^2 \right)^{\frac{1}{2}}.
\end{align*}
Applying \eqref{visbound}, integrating over $\Sigma_\pi$, and employing \eqref{whatisit} gives \eqref{variety} with $\widetilde \psi := \psi$
(the constant can be chosen independently of $k$ because the number of possible values of $k$ is finite and depends only on $n$).

Now that \eqref{variety} has been established for all $\psi$ which are constant on dyadic boxes (and noting that one may assume without loss of generality that $\widetilde \psi = \psi$ in all such cases), the final step is to extend it to all nonnegative integrable Borel functions on $B_R$.  Let $\psi$ be any such function; it may be assumed that $\psi$ is positive on a set of positive measure in $B_R$, as otherwise \eqref{variety} will once again hold by taking $\{\omega_i^x\}_{i=1}^n$ to be the standard basis at every point $x \in B_R$ and taking $\widetilde \psi$ to be identically zero.
The vector fields $\{\omega_i^x\}_{i=1}^n$ are constructed via approximation. To that end, let  $\varphi_0 := \psi$. For all integers $j \geq 0$, let $\psi_j$ be any nonnegative function on $B_R$, constant on some dyadic scale $\delta_j$, such that 
\[ \int_{B_R} | \psi_{j}(x) - \varphi_{j}(x)| dx \leq \frac{1}{N^2} \int_{B_R} \varphi_{j}(x) dx \]
for some large $N$ to be specified. Then let $\varphi_{j+1}(x) := \varphi_j(x) \chi_{\psi_j(x) \leq \frac{N-1}{N} \varphi_j(x)}$. 

The inequality $\psi_j(x) \leq (N-1) \varphi_j(x)/N$ holding for a particular $x$ implies that $\varphi_j(x) \leq N (\varphi_j(x) - \psi_j(x)) \leq N |\varphi_j(x) - \psi_j(x)|$. It follows that when $\varphi_{j+1}(x)$ is not simply zero, $\varphi_{j+1}(x) \leq N |\psi_j(x) - \varphi_j(x)|$. Consequently
\[ \int_{B_R} \varphi_{j+1} (x) dx \leq N \int_{B_R} |\psi_{j}(x) - \varphi_j(x)| dx \leq \frac{1}{N} \int_{B_R} \varphi_j(x) dx,   \]
so by induction on $j$ it follows that
\begin{equation}  \int_{B_R} \varphi_{j} (x) dx \leq N^{-j} \int_{B_R} \psi(x) dx \ \text{ for all } \ j \geq 0. \label{geodecay} \end{equation}
Similarly, the triangle inequality dictates that
\begin{equation} \int_{B_R} \psi_j \leq \int_{B_R} \varphi_j + \int_{B_R} |\varphi_j - \psi_j| \leq \frac{N^2+1}{N^2} \int_{B_R} \varphi_j \leq \frac{N^2+1}{N^{j+2}} \int_{B_R} \psi \label{psiub} \end{equation}
for every $j \geq 0$.

 The functions $\varphi_j$ are pointwise nonincreasing as functions of $j$ and in fact for each $x$, the sequence $\{\varphi_j(x)\}_{j=0}^{\infty}$ must either be constant or must be zero beyond some finite value of $j$. The Lebesgue Dominated Convergence Theorem (which applies because $\varphi_j(x) \leq \psi(x)$ for all $x \in B_R$ and because $\lim_{j \rightarrow \infty} \varphi_j(x)$ exists for every $x \in B_R$) implies that $\varphi_j(x) \rightarrow 0$ for almost every $x \in B_R$, so
  almost every $x \in B_R$ admits some finite minimal index $j_0 \geq 0$ for which $\varphi_{j_0} (x) = 0$. For each index $j \geq 0$, let $E_j$ be the set of those points $x \in B_R$ such that $j$ is the minimal index for which $\varphi_j(x) = 0$. Let $E_{\infty}$ be the collection of those points $x \in B_R$ not belonging to any $E_j$ for a finite value of $j$. As defined, the sets $E_\infty,E_0,E_1,\ldots$ are pairwise disjoint and their union is $B_R$.
  
  Now for each finite $j$, let $(\omega_{1}^x)_j,\ldots,(\omega_n^x)_j$ be the piecewise-constant vector fields obtained by applying Lemma \ref{guthlemma} to locally constant function $\psi_j$ (which is possible because the Lemma has already been established as true for such functions). Let $\{(\omega_1^x)_{\infty},\ldots,(\omega_n^x)_{\infty}\}$ be the standard basis (i.e., the vector fields are constant as functions of $x$ for each $i=1,\ldots,n$). The lemma will be shown to hold for the vector fields
\[ \omega_{i}^x := \chi_{E_{\infty}}(x) (\omega_i^x)_{\infty} + \chi_{E_0} (x) (\omega_i^x)_0 + \sum_{j=1}^\infty \chi_{E_j}(x) (\omega_i^x)_{j-1}, \qquad i = 1,\ldots,n \]
when $\widetilde \psi(x) := \psi(x) \chi_{B_R \setminus E_\infty}(x)$.
Every $x \in B_R$ belongs to exactly one of the sets $E_\infty, E_0, E_1,\ldots$, so the condition $|\det \{ \omega_i^x\}_{i=1}^n | = 1$ is satisfied at every $x \in B_R$ because $|\det \{ (\omega_i^x)_j\}_{i=1}^n | = 1$ for every $x \in B_R$ and every $j$.
Now substitute this definition of $\omega_i^x$ into \eqref{variety} and expand the sum. Because $\widetilde \psi$ vanishes on the null set $E_\infty$ by definition and because because $\psi$ vanishes at every point of $E_0$, it follows that
\begin{align*}
\int_{\Sigma_\pi}  & [ \widetilde \psi(x)]^{\frac{n-k}{n}}   || d \pi (x)||_{\omega^x} d \sigma(x) 
= \sum_{j=1}^\infty \int_{\Sigma_{\pi} \cap E_j} [ \psi(x)]^{\frac{n-k}{n}} || d \pi (x)||_{(\omega^x)_{j-1}} d \sigma(x)
\end{align*}
for every $k$ with $1 \leq k < n$ and every polynomial map $\pi : \R^n \rightarrow \R^k$.
For any $x \in E_j$ with $j \geq 1$, $\varphi_{j-1}(x) = \psi(x) \neq 0$; consequently the definition of $\varphi_j$ combined with the knowledge that $\varphi_j(x) = 0$ implies that
$\psi_{j-1}(x) > (N-1) \varphi_{j-1}(x) / N = (N-1) \psi(x) / N$.
In other words $\psi(x) \leq N \psi_{j-1}(x) / (N-1)$ for every $x \in E_j$, so
\begin{align*}
\int_{\Sigma_\pi  \cap E_j}[ \psi(x)]^{\frac{n-k}{n}}  &  || d \pi (x)||_{(\omega^x)_{j-1}} d \sigma(x) \\ & \leq \int_{\Sigma_\pi  \cap E_j} \left[  \frac{N}{N-1} \psi_{j-1}(x) \right]^{\frac{n-k}{n}}   || d \pi (x)||_{(\omega^x)_{j-1}} d \sigma(x) \\
& \leq C_n (\deg \pi) \left[ \frac{N}{N-1} \right]^\frac{n-k}{n} \left[ \int_{B_R} \psi_{j-1} \right]^{\frac{n-k}{n}}
\end{align*}
for all $j \geq 1$ by virtue of the fact that $\psi_{j-1}$ is constant on some dyadic grid and therefore \eqref{variety} is known to hold with $\widetilde \psi_{j-1} = \psi_{j-1}$. Summing over $j$ and using the upper bound \eqref{psiub} gives
\begin{align*} \int_{\Sigma_\pi}  & [ \psi(x)]^{\frac{n-k}{n}}   || d \pi (x)||_{\omega^x} d \sigma(x)  \\ & \leq C_n (\deg \pi) \left[ \int_{B_R} \psi \right]^{\frac{n-k}{n}} \sum_{j=0}^\infty \left[ \frac{N}{N-1} \right]^{\frac{n-k}{n}} \left[ \frac{N^2+1}{N^{j+2}} \right]^{\frac{n-k}{n}}. 
\end{align*}
Since $k < n$, choosing $N$ suitably large depending on $n$ and $k$, one may assume
\[ \sum_{j=0}^\infty  \left[ \frac{N^2+1}{N^{j+1}(N-1)} \right]^\frac{n-k}{n} \leq 2, \]
which means that \eqref{variety} holds for any integrable $\psi$ with a $C_n$ no more than double the constant which holds for functions constant on dyadic scales. 
\end{proof}

\begin{proof}[Proof of Proposition \ref{visibility}]
This lemma is a consequence of Guth's visibility lemma \cite{guth2010}*{Lemma 6.6} (or, alternatively, Theorem 3 of Carbery and Valdimarsson \cite{cv2013}) and Zhang's intersection estimate \cite{zhang2018}*{Theorem 5.2}.
It is assumed by the proposition that $\psi$ is nonnegative and not identically zero, so it suffices to assume that the integral of $\psi$ on $B_R$ is exactly $(2R)^n$, since if not, one may apply the lemma to the function $(2R)^n (\int_{B_R} \psi)^{-1} \psi(x)$ and then multiply the resulting measures $\mu^x$ by $(2R)^{-1} (\int_{B_R} \psi)^{1/n}$ to recover the full proposition. 

The proof proceeds by first taking $\delta = 1$ and then establishing all other cases by rescaling.
By Guth's Lemma 6.6, given any finitely-supported, nonnegative, integer-valued function $M(Q)$ defined on the lattice of cubes $\Lambda_1$, there exists an algebraic hypersurface $Z$ of degree at most $C_n (\sum_{Q} M(Q))^{1/n}$ for some $C_n$ (not the same as in \eqref{whatisit}) such that
\begin{equation} \overline{\vis} [ Z \cap Q ] \geq M(Q) \text{ for all } Q \in \Lambda_1,\label{guth} \end{equation}
where $\overline{\vis}[Z \cap Q]$ is the quantity called mollified visibility, defined to be the reciprocal of the Euclidean volume of the convex set of vectors $u$ for which $||u|| \leq 1$ and
\begin{equation} \frac{1}{|B(Z,\epsilon)|} \int_{B(Z,\epsilon)} \int_{Z' \cap Q} |u \cdot \widehat n(z') | \dH^{n-1} (z') d Z' \leq 1.\label{guth0} \end{equation}
Here $\widehat{n}(z)$ is the unit normal to $Z'$ at the point $z'$ and the metric structure and measure on the space of algebraic hypersurfaces is the one inherited by identifying each hypersurface $Z'$ of the given degree with the polynomial defining it modulo nonzero scalar multiples.
The technical constraint $||u|| \leq 1$ in Guth's definition of mollified visibility is one which must be properly handled, as on its surface it makes mollified visibility larger than it might otherwise be. One of the principal points of the current proposition is to remove this requirement so that one may work with the unrestricted definition of visibility given by \eqref{visdef}. 

Zhang's approach of adding hyperplanes works here as well. Let $P$ be the union of all hyperplanes having the form
\[\set{(x_1,\ldots,x_n)}{x_i = \frac{1}{2} + j } \] 
for some $i \in \{1,\ldots,n\}$ and $j \in \{-R,\ldots,R-1\}$. This collection $P$ is itself the zero set of a polynomial of degree $2Rn$ and each cube $Q \in \Lambda_1$ intersects $P$ in a union of $n$ orthogonal faces, each with $(n-1)$-dimensional Hausdorff measure equal to $1$. Therefore
\[ \int_{(Z' \cup P) \cap Q} |u \cdot \widehat n(z)| \dH^{n-1}(z') = \int_{Z' \cap Q} |u \cdot \widehat n(z)| \dH^{n-1}(z') + \sum_{i=1}^n |u \cdot e_i| \]
where $e_1,\ldots,e_n$ are standard Euclidean unit vectors. 
Averaging over $Z'$ implies that any $u \in \R^n$ for which
\[  \frac{1}{|B(Z,\epsilon)|} \int_{B(Z,\epsilon)} \int_{(Z' \cup P) \cap Q} |u \cdot \widehat n(z') | \dH^{n-1} (z') d Z' \leq 1 \]
(note that the difference from \eqref{guth0} is that the inner integral is now over $(Z' \cup P) \cap Q$), must satisfy both $||u|| \leq 1$ and \eqref{guth0}. Now for each $Q \in \Lambda_1$, define a measure $\mu_Q$ on the unit sphere $\mathbb S^{n-1}$ by means of the pushforward formula
\begin{equation} \int f d \mu_Q := \frac{1}{|B(Z,\epsilon)|} \int_{B(Z,\epsilon)} \int_{(Z' \cup P) \cap Q} f( \widehat n(z')) \dH^{n-1} (z') d Z'.  \label{push} \end{equation}
(Note that $\mu_Q$ is finite because the $(n-1)$-dimensional Hausdorff measure of an algebraic hypersurface must be finite on $Q$ with a bound depending only on degree. One of many possible proofs of this fact is to use Zhang's inequality (5.5) with $U := [-1,1]^{n(n-1)}$, $Z_1 := Z'$ and $Z_2,\ldots,Z_n$ ranging over all hyperplanes which pass through the center of $Q$ and have normals pointing in standard coordinate directions.) 
The (untruncated) fading zone $F(\mu_Q)$ is automatically contained in the intersection of the unit ball and also in the set of those $u \in \R^n$ satisfying \eqref{guth0}. Therefore 
\[ \vis(\mu_Q) \geq \overline{\vis}[Z \cap Q] \geq M(Q) \text{ and } \int |u \cdot y^*| d \mu_Q(y^*) \geq ||u|| \text{ for all } u \in \R^n. \]
Each variety $Z' \cup P$ appearing on the right-hand side of \eqref{push} has degree at most $C_n (\sum_Q M(Q))^{1/n} + 2Rn$. Now for each $x \in [-R,R)^n$, let $\mu^x$ be the measure $\mu_{Q}$ for the unique $Q \in \Lambda_1$ containing $x$. This gives that $\vis(\mu^x) = \vis(\mu_Q) \geq M(Q)$ for all $x \in Q$ and $\int |u \cdot y^*| d \mu^x(y^*) \geq ||u||$ for all $u \in \R^n$.

The next step is to use Zhang's Theorem 5.2 to establish that when $\Sigma_\pi$ is the smooth zero set of $\pi$,
\begin{align}
 \int_{\Sigma_\pi} & \left[ \int \cdots \int \left|\frac{d\pi(x)}{||d\pi(x)||} \wedge y_1^* \wedge \cdots \wedge y_{n-k}^* \right| d \mu^x(y_1^*) \cdots d \mu^x(y_{n-k}^*) \right] d \mathcal H^{n-1}(x) \nonumber
 \\ & \leq 2^{n(n-1)} \deg \pi \left[ C_n \left(\sum_Q M(Q) \right)^{1/n} + 2Rn \right]^{n-k}. \label{interwhat}
\end{align}
On the left-hand side of \eqref{interwhat}, write each $d \mu^x(y_i^*)$ in terms of \eqref{push} as an integral over varieties. For a given $x \in B_R$, the measure $\mu_Q$ for which $\mu^x = \mu_Q$ has the property that on the right-hand side of \eqref{push}, the integral over $z'$ is restricted in such a way that $x$ and $z'$ both belong to the same unique $Q \in \Lambda_1$. This means that $z' - x \in [-1,1]^n$ for any $x \in B_R$ and any $z'$ in the support of the integral defining $\mu^x$. Thus
 \begin{align*}
 \int_{\Sigma_\pi} & \left[ \int \cdots \int \left|\frac{d\pi(x)}{||d\pi(x)||} \wedge y_1^* \wedge \cdots \wedge y_{n-k}^* \right| d \mu^x(y_1^*) \cdots d \mu^x(y_{n-k}^*) \right] d \mathcal H^{n-k}(x)  \\
 & \leq \frac{1}{|B(Z,\epsilon)|^{n-k}} \int_{B(Z,\epsilon)} \cdots \int_{B(Z,\epsilon)} I(Z'_1,\ldots,Z'_{n-k}) d Z'_1 \cdots d Z'_{n-k} 
 \end{align*}
 with $I(Z'_1,\ldots,Z'_{n-k})$ equal to
 \begin{align*}
 \int_{\Sigma_\pi} &  \int_{Z'_1} \cdots \int_{Z'_{n-k}} \chi_U(z'_1-x,\ldots,z'_{n-k}-x) \\ & \cdot \left| \frac{d \pi(x)}{|| d \pi(x)||} \wedge \widehat{n}(z'_1) \wedge \cdots \wedge \widehat{n}(z'_{n-k}) \right| \dH^{n-1}(z'_{n-k}) \cdots \dH^{n-1}(z'_1) \dH^{n-k}(x) 
  \end{align*}
 and $U := [-1,1]^{n(n-k)}$. Now Proposition \ref{formprop} combined with the observation that $\widehat{n}(z'_i) = (T_{z'_i} Z')^\perp$ in Zhang's notation
 allows one to apply his Theorem 5.2 to conclude that
 \begin{equation} I(Z'_1,\ldots,Z'_{n-k}) \leq 2^{n(n-k)} \deg \pi \left[ C_n \left(\sum_Q M(Q) \right)^{1/n} + 2Rn \right]^{n-k}. \label{tonash} \end{equation}
 Since the left-hand side of \eqref{interwhat} is simply an average over $Z'_1,\ldots,Z'_{n-k}$, the full inequality \eqref{interwhat} follows immediately (with constant $2^{n(n-1)}$ because $k \geq 1$). It is also worth noting that it is possible to slightly relax the condition that $\pi$ is polynomial: if the functions $\pi^1,\ldots,\pi^k$ are Nash functions (see \cite{kollar2017} for an accessible introduction), then \eqref{tonash} holds with $\deg \pi$ replaced by the product of the complexities of $\pi^1,\ldots,\pi^k$ thanks to the B\'{e}zout Theorem for Nash functions \cite{ramanakoraisina89}. As this is the only place in the proof where B\'{e}zout's Theorem is needed (aside from the earlier proof of the case $k=n$, which can also be replaced by the Nash analogue), a Nash version of Theorem \ref{mainthm} must also hold once the remaining portions of the main proof are complete: finiteness of the supremum \eqref{testing2} implies boundedness of \eqref{bddness} with $||T|| \leq  C' [[T]] \prod_{j=1}^m (\prod_{i=1}^{k_j} c(\pi_j^i))^{q_j/p_j}$, where $c(\pi_j^i)$ is complexity. The analogous version of Theorem \ref{restrictstrongtype} (which is stated in Section \ref{mainpossec}) holds as well.
 
To conclude the case $\delta = 1$, observe that \eqref{interwhat} directly implies
\eqref{whatisit} when one takes $M(Q)$ to be the smallest integer greater than or equal to $1 + \psi$ on $Q$. Because $1 + \psi|_Q \leq M(Q) \leq 2 + \psi|_Q$ for each $Q$,
 \[ \sum_Q M(Q) \leq \sum_Q (2 + \psi) = 2 (2R)^n + \sum_Q \psi|_Q = 3 (2R)^n = 3 \int_{B_R} \psi, \]
 so
 \[ C_n \left( \sum_{Q} M(Q) \right)^{1/n} + 2Rn \leq (3^{1/n} C_n + n) \left( \int_{B_R} \psi \right)^{\frac{1}{n}}. \]
 The lower bound \eqref{bignorm0} follows simply because it has already been shown that left-hand side is greater than $||u||$, and $(\int_{B_R} \psi)^{1/n} / (2R) = 1$ with the current normalization of $\psi$.

The proposition is now fully proved when $\delta = 1$. At finer scales $\delta$, apply the scale $1$ version of the proposition to the function $\psi(\delta x)$ on the box $[-R \delta^{-1}, R \delta^{-1})^n$. This yields measures $ \tilde \mu^x$ for $x \in [-R \delta^{-1}, R \delta^{-1})^n$ such that
\[ \vis(\tilde \mu^{\delta^{-1} x}) \geq \psi(x) \text{ and } \int |u \cdot y^*| d \tilde \mu^{\delta^{-1} x} (y^*) \geq \frac{\left(\int_{[-R \delta^{-1} , R \delta^{-1})^n} \psi(\delta x) dx\right)^{\frac{1}{n}}}{2 R \delta^{-1}} ||u|| \]
for all $x \in B_R$ (and a change of variables shows that the coefficient of $||u||$ above is exactly $(\int_{B_R} \psi)^{1/n}/(2R)$). Now consider the quantity
\begin{align*} & \int_{\Sigma_\pi}  \left[ \int \left| \frac{d \pi(x)}{||d \pi(x)||} \wedge y_1^* \wedge \cdots \wedge y_{n-k}^* \right| d \tilde \mu^{\delta^{-1} x}(y_1^*) \cdots d \tilde \mu^{\delta^{-1} x}(y_{n-k}^*) \right] \dH^{n-k}(x),
\end{align*}
where the integral sign inside the brackets is shorthand for the $(n-k)$-fold iterated integral over $y_1^*,\ldots,y_{n-k}^*$.
After rescaling $x \mapsto \delta x$, this must equal
\begin{align*}
& \delta^{n-k} \int_{\Sigma_{\pi_\delta}} \left[ \int \left|\frac{d \pi_\delta(x)}{||d \pi_\delta(x)||} \wedge y_1^* \wedge \cdots \wedge y_{n-k}^* \right| d \tilde \mu^{ x}(y_1^*) \cdots d \tilde \mu^{x}(y_{n-k}^*) \right] \dH^{n-k}(x)
\end{align*}
where $\pi_\delta(x)$ is the polynomial $\pi(\delta x)$ (note, e.g., $\delta^{-1} \Sigma_{\pi} = \Sigma_{\pi_\delta}$). By \eqref{whatisit}, this does not exceed
\begin{align*}
 \delta^{n-k} C_{n} \deg \pi_\delta & \left[ \int_{[-R \delta^{-1}, R \delta^{-1})^n}   \psi(\delta x) dx \right]^\frac{n-k}{n} = C_n \deg \pi \left[ \int_{B_R} \psi(x) dx \right]^\frac{n-k}{n}. 
 \end{align*}
Therefore the proposition must hold at scale $\delta$ by choosing $\mu^x := \tilde \mu^{\delta^{-1}x}$ from the unit scale construction.
\end{proof}

\begin{proof}[Proof of Proposition \ref{basis}]
Consider the quantity
\[ ||u||_\mu := \int |u \cdot y^*| d \mu(y^*). \]
The fading zone $F(\mu)$ is precisely the set of those vectors $u$ for which $||u||_\mu \leq 1$. Moreover, the assumption that $||u||_\mu \geq \psi_0 ||u||$ and the finiteness of the measure $\mu$ guarantee that $F(\mu)$ is compact and contains an open ball centered at the origin. Let $(u_1,\ldots,u_n)$ be any tuple in $(F(\mu))^n$ which maximizes
\[ |\det (u_1,\ldots,u_n)|. \]
Because $F(\mu)$ contains a neighborhood of the origin, the maximum value attained is not zero. Let $u_1^*, \ldots, u_n^*$ be such that $u_i^* \cdot u_j = \delta_{ij}$. It follows that
\begin{equation} u_i^* \cdot v = (-1)^{i-1} \frac{\det (v,u_1,\ldots,\widehat{u_i},\cdots,u_n)}{\det (u_1,\ldots,u_n)} \label{cramer} \end{equation}
(here $\widehat{\cdot}$ denotes omission) for all $v$
because both sides of \eqref{cramer} are linear functions of $v$ which equal one when $v = u_i$ and vanish when $v = u_{j}$ for $j \neq i$. In particular, if $v$ belongs to the unit ball of $||\cdot||_\mu$, then the ratio of determinants on the right-hand side of \eqref{cramer} has magnitude at most $1$. 
After scaling, this implies that
\begin{equation} \max_{i=1,\ldots,n} | u_i^*\cdot v| \leq ||v||_\mu \label{flower} \end{equation}
for all $v \in \R^n$.
Since
\begin{equation} v = \sum_{i=1}^n (u_i^* \cdot v) u_i \label{fcoeff} \end{equation}
(again, because both sides are equal when $v = u_j$ for any $j=1,\ldots,n$), by the triangle inequality, 
\begin{equation} ||v||_\mu \leq \sum_{i=1}^n |u_i^* \cdot v|. \label{fupper} \end{equation}

Now consider the set of vectors $v$ such that $v = \sum_{i=1}^n \theta_i u_i$ for $\sum_{i=1}^n |\theta_i| \leq 1$. By \eqref{fcoeff} and \eqref{fupper}, $||v||_\mu \leq 1$, meaning every such $v$ belongs to the fading zone $F(\mu)$. This set of $v$ is a polytope in $\R^n$ of volume $\frac{2^n}{n!} |\det (u_1,\ldots,u_n)|$, so 
\[ \frac{2^n}{n!} |\det (u_1,\ldots,u_n)| \leq |F(\mu)|. \]
Likewise, \eqref{fcoeff} and \eqref{flower} imply that the fading zone is contained in the set of $v$'s expressible as $\sum_{i=1}^n \theta_i v_i$ with $\max_i |\theta_i| \leq 1$, which is a polytope of volume $2^n |\det (u_1,\ldots,u_n)|$. Therefore
\[ \frac{2^n}{n!} |\det (u_1,\ldots,u_n)| \leq |F(\mu)| \leq 2^n |\det (u_1,\ldots,u_n)|. \]
 Taking reciprocals gives
\begin{equation} 2^{-n} |\det (u_1^*,\ldots,u_n^*)| \leq \vis(\mu) \leq n! 2^{-n} |\det (u_1^*,\ldots,u_n^*)|. \label{itovis} \end{equation}

Let $\omega_i := u_i / |\det \{u_{i'}\}_{i'=1}^n |^{1/n}$ and $\omega_i^* := u_i^* / |\det \{u_{i'}^*\}_{i'=1}^n |^{1/n}$ for each $i=1,\ldots,n$.
Now $|\det \{\omega_i^*\}_{i=1}^n | = |\det \{\omega_i\}_{i=1}^n | = 1$; \eqref{flower} and \eqref{fupper} imply
\[ |\det (u_1^*,\ldots,u_n^*)|^{1/n} \max_{i=1,\ldots,n} |\omega_i^* \cdot v| \leq ||v||_\mu \leq |\det(u_1^*,\ldots,u_n^*)|^{1/n} \sum_{i=1}^n |\omega_i^* \cdot v|. \]
Using \eqref{itovis} to estimate $|\det (u_1^*,\ldots,u_n^*)|^{1/n}$ gives
\begin{equation} \frac{2}{(n!)^{1/n}} (\vis{\mu})^\frac{1}{n} \max_{i=1,\ldots,n} |\omega_i^* \cdot v| \leq \int |v \cdot y^*| d \mu(y^*) \leq 2 (\vis{\mu})^\frac{1}{n} \sum_{n=1}^n |\omega_i^* \cdot v| \label{mainconvex} \end{equation}
for every $v \in \R^n$.

The proof of \eqref{midineq} is by induction on $n - k$. Regarding $A^* \wedge y_1^* \wedge \cdots \wedge y_{n-k}^*$ as a linear functional acting on $y_{n-k}^*$ gives the existence of $v \in \R^n$ depending on $A^*$ and $y_1^*,\ldots,y_{n-k-1}^*$ such that $A^* \wedge y_1^* \wedge \cdots \wedge y_{n-k}^* = v \cdot y_{n-k}^*$; applying \eqref{mainconvex} to this particular $v$ gives
\begin{align*}
\frac{2}{n} (\vis \mu)^{\frac{1}{n}} \max_{i=1,\ldots,n} &  |A^* \wedge y_1^* \wedge \cdots \wedge y_{n-k-1}^* \wedge \omega_i^*| \\ & \leq \int |A^* \wedge y_1^* \wedge \cdots \wedge y_{n-k}^*| d \mu(y_{n-k}^*).  
\end{align*}
Integrating over the remaining $y_1^*,\ldots,y_{n-k-1}^*$ gives
\begin{align*}
\frac{2}{n} & (\vis \mu)^{\frac{1}{n}} \max_{i=1,\ldots,n}   \int \cdots \int |A^* \wedge y_1^* \wedge \cdots \wedge y_{n-k-1}^* \wedge \omega_i^*| d \mu(y_1^*) \cdots d \mu(y_{n-k-1}^*) \\ & \leq \int \cdots \int |A^* \wedge y_1^* \wedge \cdots \wedge y_{n-k}^*| d \mu(y_1^*) \cdots d \mu(y_{n-k}^*). 
\end{align*}
Now the induction hypothesis (applied to the $(k+1)$-linear functional $A^* \wedge \omega_i^*$) gives the inequalities
\begin{align*}
\int \cdots \int & |A^* \wedge y_1^* \wedge \cdots \wedge y_{n-k-1}^* \wedge \omega_i^*| d \mu(y_1^*) \cdots d \mu(y_{n-k-1}^*) \\
& \geq \left( \frac{2}{n} (\vis \mu)^{\frac{1}{n}} \right)^{n-k-1} \max_{i_1,\ldots,i_{n-k-1}} |A^* \wedge \omega_{i_1}^* \wedge \cdots \wedge \omega_{i_{n-k-1}}^* \wedge \omega_i^*|,
\end{align*}
which ultimately implies that
\begin{align*}
\left( \frac{2}{n} \left( \vis \mu \right)^{\frac{1}{n}} \right)^{n-k} & \max_{i_1,\ldots,i_{n-k} } | A^* \wedge \omega_{i_1}^* \wedge \cdots \wedge \omega_{i_{n-k}}^*| \\
& \leq \int |A^* \wedge y_1^* \wedge \cdots \wedge y_{n-k}^*| d \mu(y_1^*) \cdots d \mu(y_{n-k}^*).
\end{align*}
The conclusion of the proposition rests on the observation that $|A^* \wedge \omega_{i_{1}}^* \wedge \cdots \wedge \omega_{i_{n-k}}^*| = |A^*(\omega_{j_{1}},\ldots,\omega_{j_k})|$ where $\{i_1,\ldots,i_{n-k}\} \cup \{j_1,\ldots,j_k\} = \{1,\ldots,n\}$, which one can easily see using the fact that $\omega_i^* \cdot \omega_j = \delta_{ij}$ and writing $A^*$ in terms of the basis $\omega_{i_1}^* \wedge \cdots \wedge \omega_{i_k}^*$ for all possible $i_1 < i_2 < \cdots < i_k$.
\end{proof}

\section{Generalized Brascamp-Lieb on varieties}
\label{possec}
\subsection{Statement and proof of Theorem \ref{main1}}
\label{proofsec1}
The portion of Theorem \ref{mainthm} dealing with the sufficiency of the testing condition \eqref{testing} follows from Theorem \ref{main1} below, which is itself a rather direct consequence of Lemma \ref{guthlemma}.
\begin{theorem}
For any positive integer $m$, suppose that for each $j = 1,\ldots,m$, $(\Omega_j,\pi_j,\Sigma_j)$ is a smooth incidence relation on $\R^{n} \times \R^{n_j}$ with codimension $k_j$ such that $\pi_j(x,y_j)$ is a polynomial map as a function of $x$ with bounded degree as $y_j$ varies.  Fix exponents $r_1,\ldots,r_m \geq 0$ satisfying $k_1 r_1 + \cdots + k_m r_m = n$, and for each nonnegative integrable Borel function $f_j$ on $\R^{n_j}$, $j=1,\ldots,m$, let \label{main1}
\begin{equation}
\begin{split}  Q  (f_1,& \ldots,f_m)(x) := \\ & \mathop{\inf_{\{\omega_i\}_{i=1}^n}}_{|\det \{\omega_i\}_{i=1}| = 1} \prod_{j=1}^m \left( \int_{\li{x}_j} f_j(y_j) ||d_x \pi_j (x,y_j)||_\omega d \sigma_j (y_j) \right)^{r_j}. \end{split}
\label{multilinear} \end{equation}
There exists a constant $C_n$ depending only on $n$ such that 
\begin{equation}
\int_{[-R,R)^n} Q(f_1,\ldots,f_m)(x) dx \leq  \prod_{j=1}^m \left[ C_n (\deg \pi_j) \int_{\R^{n_j}} f_j \right]^{r_j}
\end{equation}
for any positive integer $R$, where $\deg \pi_j := \max_{y_j} \deg \pi_j(\cdot,y_j)$.
\end{theorem}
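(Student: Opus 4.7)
The plan is to apply Lemma \ref{guthlemma} with $\psi$ equal to a truncation of the very quantity $Q(f_1,\ldots,f_m)$ we wish to integrate, to combine the resulting $m$ Guth-type inequalities via multilinear H\"older with weights $\lambda_j := r_j/\sum_{j'}r_{j'}$, and then to exploit the infimum in the definition of $Q$ itself to close a self-improving estimate for $I_N := \int_{B_R}\min(Q,N)\,dx$ before sending $N\to\infty$.

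Concretely, fix $N>0$ and take $\psi(x) := \min(Q(f_1,\ldots,f_m)(x), N)$; to ensure Borel measurability of $Q$ I would replace the infimum over unit-determinant tuples by the infimum over a countable dense subfamily, which gives the same value by continuity of the integrand in $\omega$. Lemma \ref{guthlemma} then produces measurable vector fields $\{\omega_i^x\}_{i=1}^n$ with $|\det|=1$ and a Borel $\widetilde\psi = \psi$ a.e.\ such that, applied to each polynomial $\pi_j(\cdot,y_j)$ and integrated against $f_j(y_j)\,dy_j$, the two-sided coarea/disintegration of the incidence variety $\Sigma_j$ (the content of the appendix) converts the $\ri{y_j}_j$-sliced integral into the $\li{x}_j$-sliced one, producing the $m$ inequalities
\[
\int_{B_R} \widetilde\psi(x)^{(n-k_j)/n}\, F_j(x;\omega^x)\,dx \le A_j \Bigl(\int_{B_R}\psi\Bigr)^{(n-k_j)/n}, \quad A_j := C_n(\deg\pi_j)\int f_j,
\]
with $F_j(x;\omega^x) := \int_{\li{x}_j} f_j(y_j)\,||d_x\pi_j(x,y_j)||_{\omega^x}\,d\sigma_j(y_j)$.

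Multilinear H\"older with the weights $\lambda_j$ (which sum to $1$) then yields
\[
\int_{B_R} \widetilde\psi^{S^*}\prod_j F_j(x;\omega^x)^{\lambda_j}\,dx \le \prod_j A_j^{\lambda_j} \Bigl(\int_{B_R}\psi\Bigr)^{S^*},
\]
where $S^* := \sum_j\lambda_j(n-k_j)/n$. The scaling hypothesis $\sum k_j r_j = n$ forces $S^* = 1 - 1/\sum r_j$, and hence $S^* + 1/\sum r_j = 1$ and $\lambda_j/(1-S^*) = r_j$. Since $\{\omega_i^x\}$ has unit determinant, the infimum defining $Q$ gives the pointwise bound $\prod_j F_j(x;\omega^x)^{\lambda_j} \ge Q(x)^{1/\sum r_j}$; a short case-split on $\{Q\le N\}$ and $\{Q>N\}$, using $\widetilde\psi = \min(Q,N)$ a.e., upgrades this to $\widetilde\psi^{S^*} Q^{1/\sum r_j} \ge \min(Q,N)$. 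The H\"older bound thus collapses to the self-improving estimate $I_N \le \prod_j A_j^{\lambda_j}\, I_N^{S^*}$, which rearranges (since $1-S^* = 1/\sum r_j > 0$) to $I_N \le \prod_j A_j^{r_j}$. Letting $N\to\infty$ and invoking monotone convergence completes the argument; the prefactor $C_n^{\sum r_j}$ is absorbed into a new $C_n$ depending only on $n$, using $1\le \sum r_j\le n$.

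The main obstacle I anticipate is the apparent circularity of choosing $\psi$ that depends on $\omega^x$ while $\omega^x$ is itself produced by Lemma \ref{guthlemma} applied to $\psi$: the natural choice $\psi = \prod_j F_j(x;\omega^x)^{r_j}$, which would make the H\"older step an exact identity, is circular in this way. Replacing it by $\psi = \min(Q,N)$ breaks the circle, because the infimum defining $Q$ supplies exactly the one-sided inequality $\prod_j F_j(\omega^x)^{\lambda_j} \ge Q^{1/\sum r_j}$ that is needed to close the estimate, so no fixed-point argument is required; everything else is bookkeeping via multilinear H\"older, the algebraic identity $S^* + 1/\sum r_j = 1$, and the two-sided coarea identity for $\Sigma_j$ established in the appendix.
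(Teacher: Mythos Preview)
Your proposal is correct and follows essentially the same route as the paper: apply Lemma \ref{guthlemma} to a function $\psi$ on $B_R$, use the unit-determinant vector fields $\{\omega_i^x\}$ together with the infimum in the definition of $Q$ to bound $Q^{1/r}$ pointwise by $\prod_j F_j^{r_j/r}$, combine the $m$ slice inequalities by H\"older with weights $r_j/r$, and invoke the two-sided coarea identity \eqref{mainfubini} to swap the order of integration. The only difference is the closing step: the paper keeps $\psi$ arbitrary and finishes via $L^{r}$--$L^{r'}$ duality (taking $\psi=|g|^{r'}$), whereas you choose $\psi=\min(Q,N)$ and close a self-improving inequality $I_N\le \prod_j A_j^{\lambda_j} I_N^{S^*}$ before sending $N\to\infty$; these are standard equivalent ways to conclude, and your handling of measurability via a countable dense family is a reasonable substitute for what the paper leaves implicit.
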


This result should be compared to Zhang's variety version of Brascamp-Lieb \cite{zhang2018}*{Theorem 8.1}. An interesting feature here is that there is in some sense no need to introduce the Brascamp-Lieb machinery at this stage because Lemma \ref{guthlemma} is already powerful enough not only to reproduce the Brascamp-Lieb inequalities, but to yield a strictly richer family of inequalities (or, to view it another way, to yield multilinear inequalities where the weight factor involved is, in some favorable situations, strictly larger than the power of the Brascamp-Lieb constant that would otherwise be found there). For the moment, all the associated subtlety of this problem is encapsulated in the quantity $Q$, and the interesting geometric question which follows after the proof of Theorem \ref{main1} is one of establishing various lower bounds for $Q$.

\begin{proof}[Proof of Theorem \ref{main1}.]
Let $\psi$ be any nonnegative Borel function on $B_R$. Let $\{\omega^x_i\}_{i=1}^n$ and $\widetilde \psi$ be the promised vector fields and function on $B_R$, respectively, from Lemma \ref{guthlemma}. Because $|\det (\omega_1^x,\ldots,\omega_n^x)| = 1$ for every $x \in B_R$, $Q$ is bounded above by the quantity obtained by striking the infimum in \eqref{multilinear} and replacing $\omega$ by $\omega^x$. As a consequence, fixing $r := r_1 + \cdots + r_m$ gives
\begin{align}
\int_{B_R} & \left[ Q (f_1,\ldots,f_m) (x) \right]^{\frac{1}{r}} \left[ \psi(x) \right]^{1 - \frac{1}{r}} dx \nonumber \\
& \leq \int_{B_R}  \left[ \widetilde \psi(x) \right]^{\frac{r-1}{r}} \prod_{j=1}^m \left( \int_{\li{x}_j} f_j(y_j) ||d_x \pi_j(x,y_j) ||_{\omega^x} d \sigma_j(y_j) \right)^{\frac{r_j}{r}} dx. \label{mainstep0}
\end{align}
The appearance of $\widetilde \psi$ on the right-hand side of \eqref{mainstep0} follows simply because $\psi$ and $\widetilde \psi$ are equal almost everywhere with respect to $n$-dimensional Lebesgue measure.  By virtue of the identity \[\sum_{i=1}^m \frac{n-k_j}{n} \frac{r_j}{r} = 1 - \frac{1}{r}, \] one may pull the factor $(\widetilde \psi)^{(r-1)/r}$ into the product over $j$ and apply H\"{o}lder's inequality to conclude
\begin{align}
\int_{B_R}  & \left[ \widetilde \psi(x) \right]^{\frac{r-1}{r}}  \prod_{j=1}^m \left( \int_{\li{x}_j} f_j (y_j)||d_x \pi_j(x,y_j) ||_{\omega^x} d \sigma_j(y_j) \right)^{\frac{r_j}{r}} dx \nonumber \\
& = \int_{B_R} \prod_{j=1}^m \left( [ \widetilde \psi(x)]^{\frac{n-k_j}{n}} \int_{\li{x}_j} f_j(y_j) ||d_x \pi_j(x,y_j) ||_{\omega^x} d \sigma_j(y_j) \right)^{\frac{r_j}{r}} dx \nonumber \\
& \leq \prod_{j=1}^m \left( \int_{B_R}  [ \widetilde \psi(x)]^{\frac{n-k_j}{n}} \int_{\li{x}_j} f_j(y_j) ||d_x \pi_j(x,y_j) ||_{\omega^x} d \sigma_j(y_j) \, dx \right)^{\frac{r_j}{r}}. \label{afterholder} 
\end{align}
To estimate \eqref{afterholder} using the inequality \eqref{variety} given by Lemma \ref{guthlemma}, one needs an auxiliary Fubini-type result which guarantees that the measure $d \sigma_j(y_j) \, dx$ on $\Sigma_j$ is equal to $d \sigma_j(x) dy_j$ (where in the first case coarea measure is on slices $\li{x}_j$ and in the second case is on slices $\ri{y_j}_j$). This a consequence of the identity \eqref{mainfubini} proved in Section \ref{appendix}. Using this fact gives
\begin{align}
 \int_{B_R}  [ \widetilde \psi(x)]^{\frac{n-k_j}{n}} &  \int_{\li{x}_j} f_j (y_j) ||d_x \pi_j(x,y_j) ||_{\omega^x} d \sigma_j(y_j) \, dx = \nonumber \\
 &  \int_{\R^{n_j}}  f_j(y_j) \int_{\ri{y_j}_j \cap B_R}  [ \widetilde \psi(x)]^{\frac{n-k_j}{n}} ||d_x \pi_j(x,y_j) ||_{\omega^x} d \sigma_j(x) \, dy_j. \label{mainstep1}
\end{align}
By Lemma \ref{guthlemma} and Fubini's Theorem, the quantity \eqref{mainstep1} is no greater than
\[ C_n (\deg \pi_j)  \left[ \int_{B_R} \psi \right]^{\frac{n-k_j}{n}} ||f_j||_{L^1(\R^{n_j})}. \]
Thus
\begin{align*}
 \int_{B_R}   \left[ Q (f_1,\ldots,f_m) (x) \right]^{\frac{1}{r}} & \left[ \psi(x) \right]^{1 - \frac{1}{r}} dx \\
& \leq \prod_{j=1}^m \left[ C_n (\deg \pi_j) \left[ \int_{B_R} \psi \right]^{\frac{n-k_j}{n}} ||f_j||_{L^1(\R^{n_j})} \right]^{\frac{r_j}{r}}  \\
& = \left[ \int_{B_R} \psi \right]^{1 - \frac{1}{r}} \prod_{j=1}^m ( C_n \deg \pi_j ||f_j||_{L^1(\R^{n_j})})^{\frac{r_j}{r}}.
\end{align*}
When $r = 1$, the inequality just proved has no dependence on $\psi$, so \eqref{multilinear} is immediate in this case. Otherwise $r$ must be strictly greater than $1$, because $r_1 + \cdots + r_k \geq r_1(k_1/n) + \cdots + r_m (k_m/n) = 1$ by virtue of the fact that $k_i/n \leq 1$ for each $i$.  In this case, let $\psi := |g|^{r'}$ for any $g \in L^{r'}(B_R)$, where $r'$ is exponent dual to $r$ in the H\"{o}lder sense. Then 
\begin{align*}
 \int_{B_R}  & \left[ Q (f_1,\ldots,f_m) (x) \right]^{\frac{1}{r}} |g(x)| dx \\
& \leq \prod_{j=1}^m \left[ C_n (\deg \pi_j) \left[ \int_{B_R} \psi \right]^{\frac{n-k_j}{n}} ||f_j||_{L^1(\R^{n_j})} \right]^{\frac{r_j}{r}}  \\
& = ||g||_{L^{r'}(B_R)} \prod_{j=1}^m ( C_n \deg \pi_j ||f_j||_{L^1(\R^{n_j})})^{\frac{r_j}{r}}.
\end{align*}
By duality, then, it must be the case that $(Q(f_1,\ldots,f_m))^{1/r}$ belongs to $L^r$ and
\[ \int_{B_R} Q(f_1,\ldots,f_m)(x) dx \leq  \prod_{j=1}^m \left( C_n \deg \pi_j ||f_j||_{L^1(\R^{n_j})} \right)^{r_j}\]
as desired.
\end{proof}

\subsection{Proof of Theorem \ref{mainthm}: Sufficiency of the testing condition}
\label{mainpossec}

This section contains the proof of Theorem \ref{mainthm}, which is a slight generalization of Theorem \ref{mainthm}. The inclusion of the parameter $s$ allows one to deduce local inequalities for products of Radon-Brascamp-Lieb transforms; unlike their $s = 0$ counterparts, the $s > 0$ cases are not expected to be sharp, but are included for their natural utility nevertheless. Theorem \ref{restrictstrongtype} also includes provisions for establishing restricted strong-type inequalities; this will be useful because several examples in Section \ref{examplesec} will showcase settings in which restricted strong-type inequalities are the best possible endpoint inequalities.
\begin{theorem}
For each $j=1,\ldots,m$, let $(\Omega_j,\pi_j,\Sigma_j)$ be a smooth incidence relation on $\R^n \times \R^{n_j}$ with codimension $k_j$ and let $w_j$ be a continuous, nonnegative function on $\Sigma_j$.  Suppose $p_1,\ldots,p_m \in [1,\infty)$ and $q_1,\ldots,q_m,s \in [0,\infty)$ satisfy the scaling condition \label{restrictstrongtype}
\begin{equation} n-s =  \sum_{i=1}^m \frac{k_j q_j}{p_j}. \label{genscaling} \end{equation} 
 Let $J_0$, $J_1$, and $J_2$ be pairwise disjoint subsets of $\{1,\ldots,m\}$ whose union is $\{1,\ldots,m\}$ and let $J_1 \cup J_2$ be exactly the set of those indices $j$ for which $p_j > 1$. Let $[[T]]$ be the supremum of
 \begin{equation}
 \begin{split}
 \left[ \sum_{i=1}^n ||\omega_i|| \right]^{-s} & \!  \! \! \! \prod_{j \in J_0}  \sup_{y_j \in \li{x}_j} \frac{|w_j(x,y_j)|}{||d_x \pi_j(x,y_j)||_\omega} \prod_{j \in J_1}   \! \left[ \int_{\li{x}_j} \! \! \frac{|w_j(x,y_j)|^{p'_j} d \sigma_j(y_j) }{||d_x \pi_j(x,y_j)||_{\omega}^{p'_j-1}}  \right]^{\frac{q_j}{p'_j}}  \\
\cdot \prod_{j \in J_2}   \sup_{\epsilon > 0}  & \left[ \epsilon^{1-p_j'} \int_{\li{x}_j} \chi_{||d_x \pi_j(x,y_j)||_\omega < \epsilon |w_j(x,y_j)|} |w_j(x,y_j)| d \sigma_j(y_j) \right]^{\frac{q_j}{p_j'}}
\end{split}\label{testing3}
 \end{equation}
 over all $x \in \R^n$ and all $\{\omega_i\}_{i=1}^n$ with $|\det \{\omega_i\}_{i=1}^n| = 1$. If $[[T]] < \infty$ 
 and each $\pi_j(x,y_j)$ is a polynomial in $x$ with degree bounded as a function of $y_j$, then for each $R > 0$,
 \begin{equation}   \int_{B_R} \prod_{j=1}^m \left|T_j f_j(x) \right|^{q_j} dx   \leq C [[T]] R^s \prod_{j=1}^m  (\deg \pi_j)^{\frac{q_j}{p_j}} ||f_j ||_{L^{p_j}(\R^{n_j})}^{q_j}  \label{restrictresult} \end{equation}
  holds for Borel measurable $f_1,\ldots,f_m$ under the assumption that $f_j$ is a characteristic function for each $j \in J_2$, with the constant $C$ depending only on $n, s,$ and $n_j,k_j,p_j,q_j$. As before, $\deg \pi_j := \sup_{y_j} \deg \pi^1_j(\cdot,y_j) \cdots \deg \pi^k_j(\cdot,y_j)$ and $B_R := [-R,R)^n$.
\end{theorem}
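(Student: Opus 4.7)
The plan is to adapt the proof of Theorem \ref{main1} by incorporating three new ingredients: case-by-case pointwise H\"older bounds to accommodate the $L^{p_j}$-norms, a restricted-strong-type splitting for indices $j \in J_2$, and a weighted integration producing the $R^s$ factor dictated by the off-scaling condition $\sum_j k_jq_j/p_j = n - s$.

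First I would establish a pointwise bound. Fixing $x$ and any basis $\{\omega_i\}_{i=1}^n$ with $|\det\{\omega_i\}_{i=1}^n|=1$: for $j \in J_0$ one factors out $\sup_{y_j}(w_j/\|d_x\pi_j\|_\omega)$; for $j \in J_1$ apply H\"older with exponents $p_j, p_j'$ after splitting $w_j = \|d_x\pi_j\|_\omega^{1/p_j}(w_j/\|d_x\pi_j\|_\omega^{1/p_j})$; for $j \in J_2$ with $f_j = \chi_{E_j}$, split $T_j\chi_{E_j}(x) = A_\epsilon + B_\epsilon$ at an adaptive threshold $\epsilon$, where the piece $A_\epsilon$ over $\{\|d_x\pi_j\|_\omega < \epsilon w_j\}$ is at most $\epsilon^{p_j'-1}$ times the $J_2$-testing factor of \eqref{testing3}, and on the complement $w_j \leq \epsilon^{-1}\|d_x\pi_j\|_\omega$ delivers the bound $B_\epsilon \leq \epsilon^{-1/p_j'}(T_j\chi_{E_j})^{1/p_j}(\int\chi_{E_j}\|d_x\pi_j\|_\omega d\sigma_j)^{1/p_j'}$, after which Young's inequality lets one solve for $T_j\chi_{E_j}(x)$ and optimization in $\epsilon$ yields the expected restricted-strong-type estimate. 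Setting $g_j := f_j^{p_j}$ and $r_j := q_j/p_j$, the three cases combine to
\[
\prod_j|T_jf_j(x)|^{q_j} \leq C\prod_j\mathcal C_j(x,\omega)^{q_j}\prod_j\Bigl(\int_{\li{x}_j}g_j\|d_x\pi_j\|_\omega d\sigma_j\Bigr)^{r_j},
\]
where $\mathcal C_j(x,\omega)$ are precisely the three testing factors of \eqref{testing3}; the definition of $[[T]]$ then gives $\prod_j\mathcal C_j^{q_j} \leq [[T]](\sum_i\|\omega_i\|)^s$.

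Next I would choose the basis $\omega^x$ via Lemma \ref{guthlemma} applied to a non-negative $\psi$ on $B_R$. Tracking the construction of Propositions \ref{visibility} and \ref{basis}, where $\omega_i^x = u_i^x/|\det(u_1^x,\ldots,u_n^x)|^{1/n}$ with the $u_i^x$ in the fading zone of $\mu^x$, combining the bignorm bound $\int|u\cdot y^*|d\mu^x \geq \|u\|(\int\psi)^{1/n}/(2R)$ with $\operatorname{Vis}(\mu^x) \geq \psi(x)$ yields an a priori norm bound of the form $\|\omega_i^x\| \leq C_nR\,\psi(x)^{1/n}(\int\psi)^{-1/n}$ and hence $(\sum_i\|\omega_i^x\|)^s \leq C_nR^s\psi(x)^{s/n}(\int\psi)^{-s/n}$. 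Writing $r := \sum_j r_j$, raising the pointwise bound to the $1/r$ power, and multiplying by $\psi^{1-1/r}$, the scaling identity $1 - 1/r + s/(nr) = \sum_j (n-k_j)/n \cdot r_j/r$ (immediate from $\sum_jk_jr_j = n-s$) allows one to distribute the $\psi$-weight across the product and apply H\"older with exponents $r/r_j$. Each resulting factor $\int_{B_R}\psi^{(n-k_j)/n}\int g_j\|d_x\pi_j\|_{\omega^x}d\sigma_j\,dx$ is controlled by Fubini (via the coarea identity from the appendix) and Lemma \ref{guthlemma}; the prefactor $(\int\psi)^{-s/(nr)}$ from the norm bound cancels one copy of $(\int\psi)^{s/(nr)}$ produced by the H\"older step, leaving
\[
\int_{B_R}\Bigl(\prod_j|T_jf_j|^{q_j}\Bigr)^{1/r}\psi^{1-1/r}\,dx \leq C([[T]]R^s)^{1/r}\Bigl(\int\psi\Bigr)^{1-1/r}\prod_j(\deg\pi_j\|g_j\|_1)^{r_j/r}.
\]

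Finally I extract the desired inequality by cases on $r$. If $r = 1$ the $\psi$-dependence drops out entirely. If $r > 1$, take $\psi = |g|^{r'}$ for arbitrary $g \in L^{r'}(B_R)$ and apply converse H\"older. If $r < 1$ (possible here because $\sum_jk_jr_j = n-s$ only forces $r \geq (n-s)/n$), set $\psi \equiv 1$ and invoke the Jensen inequality $\int f \leq (\int f^{1/r})^r|B_R|^{1-r}$ (convexity of $x^{1/r}$ for $r<1$); a direct computation shows that the $R$-exponents from the right-hand side and the Jensen step cancel exactly to give the prefactor $R^s$. Raising to the $r$-th power in each case and substituting $\|g_j\|_1 = \|f_j\|_{p_j}^{p_j}$ yields the stated bound. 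The hardest technical points will be the $J_2$ restricted-strong-type splitting together with the bookkeeping in the fading-zone construction that produces the a priori norm bound on $\omega^x$; once these are in place, the rest is a direct variant of the proof of Theorem \ref{main1}.
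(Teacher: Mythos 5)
Your pointwise bounds for $j\in J_0$ and $j\in J_1$ are correct and match the paper, and your Young-inequality variant of the $J_2$ restricted-strong-type estimate (split $T_j\chi_{E_j}$ at $\|d_x\pi_j\|_\omega = \epsilon w_j$, apply H\"older on the complement, absorb, optimize in $\epsilon$) is a legitimate alternative to the paper's Chebyshev argument, modulo the usual a priori finiteness checks.

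The genuine gap is the claimed a priori norm bound $\|\omega_i^x\| \leq C_n R\,\psi(x)^{1/n}(\int\psi)^{-1/n}$. Tracing the construction: $\omega_i = u_i / |\det(u_1,\ldots,u_n)|^{1/n}$ with $u_i\in F(\mu^x)$, so \eqref{bignorm0} gives $\|u_i\|\leq 2R(\int\psi)^{-1/n}$, while \eqref{itovis} gives $|\det(u_1,\ldots,u_n)|^{-1/n}\leq 2\,\vis(\mu^x)^{1/n}$. Combining, $\|\omega_i^x\|\lesssim R(\int\psi)^{-1/n}\vis(\mu^x)^{1/n}$. To convert this to your claimed bound you would need an \emph{upper} bound $\vis(\mu^x)\lesssim\psi(x)$, but Proposition \ref{visibility} only supplies the \emph{lower} bound $\vis(\mu^x)\geq\psi(x)$. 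The inequality goes the wrong way, and in fact the upper bound is false in general: the hyperplane union $P$ added in Proposition \ref{visibility} already gives every $\mu_Q$ a fixed amount of mass (hence visibility bounded below by an absolute constant) regardless of how small $\psi|_Q$ is, so $\vis(\mu^x)/\psi(x)$ can be made arbitrarily large. What the construction does give uniformly is $\|\omega_i^x\|\lesssim R$, but without the $\psi(x)^{s/n}$ factor your H\"older exponent bookkeeping breaks down: you need to distribute $\psi^{1-1/r+s/(nr)}$ across the product but only have $\psi^{1-1/r}$ to spend.

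The paper resolves this differently. Rather than trying to bound $\|\omega_i^x\|$ pointwise in terms of $\psi(x)$, it adjoins a $(m{+}1)$-st incidence relation: the codimension-$1$ sphere $\pi_{m+1}(x,y):=\tfrac{1}{2}(|x-y|^2-R^2)$ paired with $f_{m+1}:=(4R)^{-n}\chi_{[-2R,2R]^n}$. Inequality \eqref{sphereavg} gives $\chi_{B_R}(x)\leq C_nR\bigl[\sum_i\|\omega_i\|\bigr]^{-1}\int_{\li{x}_{m+1}}f_{m+1}\|d_x\pi_{m+1}\|_\omega\,d\sigma_{m+1}$, which simultaneously supplies a factor $[\sum_i\|\omega_i\|]^{-1}$ (raised to the $s$-th power, it cancels the $[\sum_i\|\omega_i\|]^{s}$ from the testing quantity) and an integral of $f_{m+1}\|d_x\pi_{m+1}\|_\omega$ that slots into Theorem \ref{main1} as an additional factor with $k_{m+1}=1$, $r_{m+1}=s$. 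Since $k_1r_1+\cdots+k_mr_m+1\cdot r_{m+1}=(n-s)+s=n$, this restores the critical scaling needed by Theorem \ref{main1}, which sidesteps the subcriticality you are attempting to handle (and also keeps $r=r_1+\cdots+r_{m+1}\geq 1$, so the $r<1$ Jensen case you describe never arises).
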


The proof of Theorem \ref{restrictstrongtype} is accomplished by establishing a number of different lower bounds for quantities like those appearing in the definition \eqref{multilinear} of $Q(f_1,\ldots,f_m)$. These inequalities are recorded in the proposition below. 
\begin{proposition}
Suppose $f$ is a nonnegative Borel function on $\R^{n'}$ and that $(\Omega,\pi,\Sigma)$ is a smooth incidence relation on $\R^n \times \R^{n'}$ of codimension $k$. Suppose also that $w$ is any continuous, nonnegative function on $\Sigma$. Let $\{\omega_i\}_{i=1}^n$ be any tuple of vectors with $|\det \{\omega_i\}_{i=1}^n| = 1$.
\begin{itemize}
\item If $p \in (1,\infty)$, then
\begin{equation}
\begin{split}
 \int_{\li{x}}   f(y)  w (x,y)  d \sigma(y)  \leq & \left[  \int_{\li{x}} \! \! |f(y)|^{p} ||d_x \pi(x,y)||_\omega d \sigma(y)  \right]^{\frac{1}{p}} \\
  &  \cdot \left[ \int_{\li{x}} \frac{|w(x,y)|^{p'} d \sigma (y)  }{||d_x \pi(x,y)||_{\omega}^{p'-1}} \right]^{\frac{1}{p'}}.
 \end{split} \label{strongineq}
 \end{equation}
 If, in addition, $f$ is a characteristic function, then
 \begin{equation}
 \begin{split}
 \int_{\li{x}}   f(y)   & w (x,y)  d \sigma(y)  \leq 2 \left[  \int_{\li{x}} \! \! f(y) ||d_x \pi(x,y)||_\omega d \sigma(y)  \right]^{\frac{1}{p}} \\
  \cdot \sup_{\epsilon > 0} & \left[ \epsilon^{1-p'} \int_{\li{x}} \chi_{||d_x \pi(x,y)||_\omega < \epsilon w(x,y)} w(x,y) d \sigma(y) \right]^{\frac{1}{p'}}.
\end{split} \label{restrictineq}
 \end{equation}
 \item If $p = 1$, then
 \begin{equation}
\begin{split}
  \int_{\li{x}}   f(y) & w (x,y)  d \sigma(y)  \leq \left[  \int_{\li{x}} \! \! f(y) ||d_x \pi(x,y)||_\omega d \sigma(y)  \right] \\
  &  \cdot \sup_{y \in \li{x}}  \frac{w(x,y)}{||d_x \pi(x,y)||_{\omega}}.
 \end{split} \label{l1ineq}
 \end{equation}
 \item If $\pi (x,y) := \frac{1}{2} \left( |x-y|^2 - R^2 \right) $ on $\R^n \times \R^n$ and $f$ is $(4R)^{-n}$ times the characteristic function of $[-2R,2R]^n$, then there is a constant $C_n$ depending only on $n$ such that
 \begin{equation}
 \chi_{B_R}(x) \leq C_n R \left[ \sum_{i=1}^n ||\omega_i|| \right]^{-1} \left[ \int_{\li{x}} f(y)  || d_x \pi(x,y)||_\omega d \sigma(y)  \right]. \label{sphereavg}
 \end{equation}
\end{itemize}
\end{proposition}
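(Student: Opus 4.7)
The plan is to handle the four inequalities in turn, grouping them by technique. Parts \eqref{strongineq} and \eqref{l1ineq} are essentially direct applications of H\"older's inequality. For \eqref{strongineq}, the factorization
\[ f(y) w(x,y) = \left[f(y)^{p} ||d_x \pi(x,y)||_\omega\right]^{1/p} \cdot \left[\frac{w(x,y)^{p'}}{||d_x \pi(x,y)||_\omega^{p'-1}}\right]^{1/p'} \]
together with H\"older's inequality with exponents $p,p'$ yields the bound immediately once one notes $p'/p = p'-1$. Inequality \eqref{l1ineq} is the limiting $p=1, p'=\infty$ version: write $fw = f||d_x\pi||_\omega \cdot (w/||d_x\pi||_\omega)$ and pull the pointwise sup outside the integral. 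In both cases the quotient $w/||d_x\pi||_\omega$ makes sense on $\li{x}$ because $\Sigma$ excludes points where $||d_x\pi||$ vanishes and $||\cdot||_\omega$ vanishes at exactly the same points as $||\cdot||$.

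For \eqref{restrictineq} a dyadic decomposition is needed because H\"older would require an $f^{p'}$ factor which is too large for a characteristic function. Set $A := \int_{\li{x}} f ||d_x\pi||_\omega d\sigma$ and $B := \sup_{\epsilon>0} \epsilon^{1-p'} \int_{\li{x}} \chi_{||d_x\pi||_\omega < \epsilon w} w \, d\sigma$, and decompose the slice according to the ratio $||d_x\pi||_\omega / w$ by setting $E_k := \{ y \in \li{x} : 2^{k} w \leq ||d_x \pi||_\omega < 2^{k+1} w\}$ for $k \in \Z$. On $E_k$ one has $w \leq 2^{-k} ||d_x\pi||_\omega$, giving $\int_{E_k} fw\,d\sigma \leq 2^{-k} A$; simultaneously $E_k \subset \{||d_x\pi||_\omega < 2^{k+1}w\}$, and since $f \leq 1$ one obtains $\int_{E_k} fw\,d\sigma \leq 2^{(k+1)(p'-1)} B$. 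Summing the minimum of these two bounds over $k \in \Z$ and optimizing at the crossover index $k_0$ determined by $2^{-k_0} \sim (B/A)^{1/p'}$ produces $A^{1/p} B^{1/p'}$ up to a universal constant coming from the two geometric series; the factor $2$ stated in \eqref{restrictineq} is the result of this book-keeping. The key point is that because $f$ is a characteristic function, the quantity $B$ can be taken $f$-independent.

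Finally, \eqref{sphereavg} reduces to a direct computation on a sphere combined with a rotational-invariance lower bound. With $\pi(x,y) = \frac{1}{2}(|x-y|^2 - R^2)$, the slice $\li{x}$ is the Euclidean sphere of radius $R$ centered at $x$, on which $||d_y\pi|| = |x-y| = R$ and therefore $d\sigma = d\mathcal{H}^{n-1}/R$. Writing $W$ for the $n \times n$ matrix with rows $\omega_i^T$, the identity $||d_x\pi||_\omega = ||W(x-y)||$ holds because $k=1$. For any $x \in B_R$ the support condition $y \in [-2R,2R]^n$ is automatic on the sphere, and the substitution $y = x + Ru$ with $u \in S^{n-1}$ (Jacobian $R^{n-1}$) combined with the homogeneity $||W(Ru)|| = R||Wu||$ collapses the integral on the right-hand side of \eqref{sphereavg} to a constant multiple of $R^{-1} \int_{S^{n-1}} ||Wu||\, d\mathcal{H}^{n-1}(u)$.

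The only remaining step is the lower bound $\int_{S^{n-1}} ||Wu||\, d\mathcal{H}^{n-1}(u) \geq c_n \sum_{i=1}^n ||\omega_i||$, which absorbs the normalization of $f$. This follows from the elementary $\ell^2$--$\ell^1$ inequality $||Wu||_2 \geq n^{-1/2} \sum_{i=1}^n |\omega_i \cdot u|$ applied pointwise, together with the rotational invariance identity $\int_{S^{n-1}} |\omega \cdot u|\, d\mathcal{H}^{n-1}(u) = ||\omega|| \int_{S^{n-1}} |u^1|\, d\mathcal{H}^{n-1}(u)$ applied to each $\omega_i$. The main (and really only) obstacle is the dyadic argument in \eqref{restrictineq}; everything else is either H\"older or a bare-hands sphere computation. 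Notably, the statement is phrased so that $B$ is independent of $f$, which is what allows the proposition to serve as the engine of the restricted strong-type extension in Theorem \ref{restrictstrongtype}.
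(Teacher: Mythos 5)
Parts \eqref{strongineq}, \eqref{l1ineq}, and \eqref{sphereavg} are correct and essentially identical to the paper's own proof: \eqref{strongineq} is the same H\"older factorization, \eqref{l1ineq} the same limiting case, and the sphere computation behind \eqref{sphereavg}, including the Cauchy--Schwarz step $\|Wu\|_2 \ge n^{-1/2}\sum_i|\omega_i\cdot u|$ and the rotational-invariance reduction, matches the paper line for line.

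The gap is in \eqref{restrictineq}. Your dyadic decomposition is a legitimate strategy and does prove an inequality of the stated form, but \emph{not} with the universal constant $2$ that you claim the book-keeping yields. The slab sum $\sum_{k\le k_0} 2^{(k+1)(p'-1)}B$ is a geometric series whose ratio is $2^{-(p'-1)}$; as $p'\to 1^+$ that ratio tends to $1$, so the sum is of size $\frac{2^{(k_0+1)(p'-1)}}{1-2^{-(p'-1)}}B$, and the prefactor $\bigl(1-2^{-(p'-1)}\bigr)^{-1}$ diverges. After matching at the crossover you end up with a constant comparable to $1 + \frac{1}{1-2^{-(p'-1)}}$, which is bounded only for $p'$ bounded away from $1$. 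So your assertion that a universal constant, let alone the constant $2$, emerges from the two geometric series is incorrect. The paper avoids this by not decomposing at all: it sets $E := \mathrm{supp}\, f \cap {}^x\Sigma$ and $A := \bigl(\int_E \|d_x\pi\|_\omega\, d\sigma\bigr)\big/\bigl(\int_E w\,d\sigma\bigr)$, observes via Chebyshev that the subset of $E$ where $\|d_x\pi\|_\omega \ge 2Aw$ carries at most half of $\int_E w\,d\sigma$, and hence $\int_E w\,d\sigma \le 2\int \chi_{\|d_x\pi\|_\omega < 2Aw}\, w\,d\sigma \le 2(2A)^{p'-1}B$. Substituting back the definition of $A$ and taking $p'$-th roots gives exactly the factor $2$. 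You should replace the dyadic sum with this single adaptively-chosen threshold $\epsilon = 2A$ (or, equivalently, redo the dyadic decomposition at scale $2^{1/(p'-1)}$ instead of $2$ so that both geometric ratios stay bounded away from $1$, though even then recovering $2$ exactly requires care). You should also address the degenerate cases $A \in \{0,\infty\}$, which the paper handles explicitly and your sketch omits.
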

\begin{proof}
To establish \eqref{strongineq}, observe by H\"{o}lder's inequality that
\begin{align}
\int_{\li{x}}  &  f(y) w(x,y) d \sigma(y) =  \int_{\li{x}} f(y) ||d_x \pi(x,y)||_\omega^\frac{1}{p}  \frac{w(x,y) d \sigma(y)}{||d_x \pi(x,y)||_{\omega}^{1/p}} \nonumber \\ 
 \leq &
\left[  \int_{\li{x}}\! \! |f(y)|^{p} ||d_x \pi(x,y)||_\omega d \sigma(y)  \right]^{\frac{1}{p}} \left[ \int_{\li{x}} \!   \frac{ (w(x,y))^{p'} d \sigma(y) }{||d_x \pi(x,y)||_{\omega}^{p'/p}} \right]^{\frac{1}{p'}} \nonumber \\
 = & \left[  \int_{\li{x}} \! \! |f(y)|^{p} ||d_x \pi(x,y)||_\omega d \sigma(y)  \right]^{\frac{1}{p}} \left[ \int_{\li{x}} \!   \frac{|w(x,y)|^{p'} d \sigma (y)  }{||d_x \pi(x,y)||_{\omega}^{p'-1}} \right]^{\frac{1}{p'}}. \nonumber
\end{align}
The inequality \eqref{l1ineq} is even simpler:
\begin{align*}
\int_{\li{x}} & f(y) w (x,y) d \sigma(y) \leq \int_{\li{x}} f(y) ||d_x \pi(x,y)||_\omega \frac{w (x,y)}{||d_x \pi(x,y)||_\omega} d \sigma(y) \\
& \leq    \int_{\li{x}} f(y) ||d_x \pi(x,y)||_\omega d \sigma(y)   \sup_{y \in \li{x}}  \frac{w(x,y)  }{||d_x \pi(x,y)||_{\omega}}.
\end{align*}

It suffices to prove \eqref{restrictineq} under the assumption that the right-hand side is finite and the left-hand side is nonzero. This implies that both sides are finite and nonzero, since if 
\[ \int_{\li{x}} f(y) ||d_x \pi(x,y)||_\omega d \sigma(y) = 0, \]
the fact that $||d_x \pi(x,y)||_\omega > 0$ almost everywhere with respect to $\sigma$ (by definition of $\Sigma$) means that $f = 0$ almost everywhere and thus implies that
\[\int_{\li{x}} f(y) w(x,y) d \sigma(y) = 0 \]
as well, which has already been assumed otherwise. Similarly, if
\[ \int_{\li{x}} f(y) w(x,y) d \sigma(y) = \infty, \]
 let $F$ be the set on which $||d_x \pi(x,y)||_\omega < w(x,y)$. Then
\[ \int_{\li{x} \cap F} w(x,y) d \sigma(y) < \infty \]
by virtue of the fact that the supremum on the right-hand side of \eqref{restrictineq} is, without loss of generality, finite when $\epsilon = 1$.
Thus if $E \subset \li{x}$ is the set of those $y$ for which $f(y) = 1$, then $E$ has infinite measure with respect to $w d \sigma$ and $||d_x\pi(x,y)||_\omega \geq w(x,y)$ on all but a set of finite measure with respect to $w d \sigma$. Consequently
\[ \infty = \int_{\li{x} \cap (E \setminus F)} w(x,y) d\sigma(y) \leq \int_{\li{x} \cap (E \setminus F)} ||d_x\pi(x,y_j)||_\omega d\sigma(y), \]
which forces the right-hand side of \eqref{restrictineq} to be infinite.
Thus without loss of generality, one may also assume that the ratio
\[ A := \frac{1}{\int_{\li{x} \cap E} w(x,y) d \sigma(y)} \int_{\li{x} \cap E}  ||d_x \pi (x,y)||_\omega d \sigma(y) \]
 is finite and nonzero. By
 Chebyshev's inequality, if $E'$ consists of those points $y \in E$ at which $||d_x \pi(x,y)||_\omega \geq 2 A w(x,y)$, then
\begin{align*} \int_{\li{x} \cap E'} w(x,y) d \sigma(y) & \leq \int_{\li{x} \cap E'} \frac{||d_x \pi(x,y)||_\omega}{2A} d \sigma(y) \\ & \leq \frac{1}{2} \int_{\li{x} \cap E} w(x,y) d \sigma(y). \end{align*}
It follows that the measure of $E \setminus E'$ with respect to $w d \sigma$ is at least half of the measure of $E$ itself. As $E \setminus E'$ consists only of points $y$ at which $||d_x \pi(x,y)||_\omega < 2 A w(x,y)$, the measure of $E$ with respect to $w d \sigma$ being bounded above by twice the measure of $E \setminus E'$ gives the integral inequality
\begin{align*}
\int_{\li{x} \cap E} & w(x,y) d \sigma(y)  \leq 2 \int_{\li{x}} \chi_{||d_x \pi(x,y)||_\omega < 2 A w(x,y)} w(x,y) d \sigma(y) \\
& \leq 2 (2 A)^{p'-1} \sup_{\epsilon > 0}  \epsilon^{1-p'} \int_{\li{x}} \chi_{||d_x \pi(x,y)||_\omega < \epsilon w(x,y)} w(x,y) d \sigma(y).
\end{align*}
Substituting the definition of $A$ back into this last line and simplifying gives
\begin{align*}
 \left( \int_{\li{x} \cap E}  w(x,y) d \sigma(y) \right)^{p'}   &  \leq 2^{p'}   \left(\int_{\li{x} \cap E} ||d_x \pi(x,y)|| d \sigma(y) \right)^{p'-1} \\ &  \cdot \sup_{\epsilon > 0}  \epsilon^{1-p'} \int_{\li{x}} \chi_{||d_x \pi(x,y)||_\omega < \epsilon w(x,y)} w(x,y) d \sigma(y).
 \end{align*}
 Raising each side to the power $1/p'$ and recalling the definition of $E$ gives \eqref{restrictineq}.

Finally, consider \eqref{sphereavg}. Suppose that $\pi (x,y) := \frac{1}{2} \left( |x-y|^2 - R^2 \right) $ on $\R^n \times \R^n$ and $f$ is $(4R)^{-n}$ times the characteristic function of $[-2R,2R]^n$. Computation gives that $D_y \pi|_{(x,y)} v = v \cdot (y-x)$ and $D_x \pi |_{(x,y)} v = v \cdot (x-y)$, meaning that for $(x,y) \in \Sigma$, $||d_y \pi  || = ||x-y|| = R$ and $||d_x \pi||_\omega = \sqrt{\sum_{i=1}^n |(x-y)\cdot \omega_i|^2}$. Thus
\begin{align*}
\int_{\li{x}} f(y) & || d_x \pi(x,y)||_\omega d \sigma(y) = \int_{|x-y|=R} f(y) \left( \sum_{i=1}^n |(x-y)\cdot \omega_i|^2 \right)^{\frac{1}{2}} \frac{\dH^{n-1}(y)}{R} \\
& \geq \chi_{B_R}(x)  \int_{{\mathbb S}^{n-1}} \frac{1}{4^n R^n}  \left( \sum_{i=1}^n |R z \cdot \omega_i|^2 \right)^{\frac{1}{2}} \frac{R^{n-1}}{R} \dH^{n-1}(z)
\end{align*}
because $f(y)$ is identically equal to $(4R)^{-n}$ on the sphere $|x-y| = R$ when $x \in B_R$ (and note that the integral over $z$ is obtained simply by the change of variables $z = R (x-y)$, which gives $\dH^{n-1}(y) = R^{n-1} \dH^{n-1}(z)$). Continuing this chain of inequalities leads to the conclusion
\begin{align*}
\int_{\li{x}} f(y) & || d_x \pi(x,y)||_\omega d \sigma(y)  \geq  \frac{\chi_{B_R}(x)}{4^n R}  \int_{{\mathbb S}^{n-1}} \left( \sum_{i=1}^n |z \cdot \omega_i|^2 \right)^{\frac{1}{2}}\dH^{n-1}(z) \\
& \geq \frac{\chi_{B_R}(x)}{4^n R}  \int_{{\mathbb S}^{n-1}} \frac{1}{\sqrt{n}} \sum_{i=1}^n |z \cdot \omega_i| \dH^{n-1}(z) \\
& \geq \frac{\chi_{B_R}(x)}{C_n R} \sum_{i=1}^n ||\omega_i||.
\end{align*}
The last line of this derivation follows by the fact that symmetry and scaling imply
\[ \int_{\mathbb S^{n-1}} |z \cdot \omega_i| \dH^{n-1}(z) = ||\omega_i|| \int_{\mathbb S^{n-1}} |z \cdot e_1| \dH^{n-1}(z), \]
where $e_1$ is the first vector in the standard basis. The constant $C_n$, as the notation suggests, depends only on $n$. This completes the proof of \eqref{sphereavg}.
\end{proof}

\begin{proof}[Proof of Theorem \ref{mainthm}: finiteness of \eqref{testing2} implies \eqref{bddness}.] Assuming that Theorem \ref{restrictstrongtype} is established, the proof of \eqref{bddness} from finiteness of the supremum \eqref{testing2} is almost immediate. Take $J_2$ to be empty and set $s = 0$. The quantity \eqref{testing3} then reduces exactly to \eqref{testing2}. For any fixed $f_1,\ldots,f_m$, the Monotone Convergence Theorem applied in the limit $R \rightarrow \infty$ implies that
 \begin{equation}   \int_{\R^n} \prod_{j=1}^m \left|T_j f_j(x) \right|^{q_j} dx   \leq C [[T]] \prod_{j=1}^m  (\deg \pi_j)^{\frac{q_j}{p_j}} ||f_j ||_{L^{p_j}(\R^{n_j})}^{q_j},  \label{restrictresult2} \end{equation}
 which then implies exactly the promised bound on $||T||$ in \eqref{bddness}.
\end{proof}

\begin{proof}[Proof of Theorem \ref{restrictstrongtype}]
Let $[[T]]_{x,\omega}$ equal \eqref{testing3} (i.e., before the supremum over $x$ or $\omega$ is taken). Let $J_0,J_1,J_2$ be as described in the statement of the theorem, and if $s > 0$, let $f_{m+1}$ be the function $(4R)^{-n} \chi_{[-2R,2R]^n}$ and set $\pi_{m+1}(x,y) := \frac{1}{2} (|x-y|^2 - R^2)$ for the current fixed value of $R$. By taking pointwise products of \eqref{strongineq}--\eqref{sphereavg}, each raised to the appropriate power,
\begin{align}
\chi_{B_R}(x) &  \prod_{j=1}^m \left| T_j f_j(x) \right|^{q_j}  \leq [[T]]_{x,\omega} \prod_{j \in J_0} \left( \int_{\li{x}_j} f_j(y_j) ||d_x \pi_j(x,y_j) ||_\omega d \sigma_j(y_j) 
\right)^{q_j} \nonumber \\
& \cdot \prod_{j \in J_1} \left( \int_{\li{x}_j} |f_j(y_j)|^{p_j} ||d_x \pi_j(x,y_j) ||_\omega d \sigma_j(y_j) \right)^{\frac{q_j}{p_j}} \nonumber \\
& \cdot \prod_{j \in J_2} 2^{q_j} \left( \int_{\li{x}_j} f_j(y_j) ||d_x \pi_j(x,y_j) ||_\omega d \sigma_j(y_j) \right)^{\frac{q_j}{p_j}} \label{justj2} \\
& \cdot \left( C_n R \int_{\li{x}_{m+1}} f_{m+1}(y_{m+1}) ||d_x \pi_{m+1}(x,y_{m+1}) ||_\omega d \sigma_{m+1}(y_{m+1}) \right)^{s} \nonumber
\end{align}
under the assumption that $f_j$ is a characteristic function for each $j \in J_2$.
(If $s = 0$, simply omit the final factor.). Let $r_i := q_i / p_i$ for $1 \leq i \leq m$, and if $s > 0$, let $r_{m+1} := s$. The scaling condition \eqref{genscaling} implies $k_1 r_1 + \cdots + k_m r_m = n$ when $s = 0$ and $k_1 r_1 + \cdots + k_m r_m + r_{m+1} = n$ when $s > 0$. Now the inequality $[[T]]_{x,\omega} \leq [[T]]$ permits the following pointwise estimate of $Q$ from below:
\[ \chi_{B_R}(x) \prod_{j=1}^m \left| T_j f_j(x) \right|^{q_j} \leq 2^{\sum_{j \in J_2} q_j } [[T]] Q(|f_1|^{p_1},\ldots,|f_m|^{p_m}) \]
when $s = 0$ and
\[ \chi_{B_R}(x) \prod_{j=1}^m \left| T_j f_j(x) \right|^{q_j} \leq 2^{\sum_{j \in J_2} q_j } (C_n R)^s [[T]] Q(|f_1|^{p_1},\ldots,|f_m|^{p_m},f_{m+1}) \]
when $s > 0$.  (Note that when $j \in J_2$, $f_j = |f_j|^{p_j}$, so there is no error made by replacing $f_j$ by $|f_j|^{p_j}$ inside the terms \eqref{justj2}).
By Theorem \ref{main1}, this implies (in both cases $s > 0$ and $s = 0$) that
\[ \begin{split}
\int_{B_R} &  \prod_{j=1}^m \left|T_j f_j(x) \right|^{q_j} dx \\ & \leq (C_n')^r  2^{\sum_{j \in J_2} q_j } (C_n R \deg \pi_{m+1})^{s} [[T]] \prod_{j=1}^{m} \left[ (\deg \pi_j) \int_{R^{n_j}} |f_{j}|^{p_j} \right]^{\frac{q_j}{p_j}} \end{split} \]
with the constant $C_n'$ being the one from Theorem \ref{main1}. Here it has been implicitly observed that $||f_{m+1}||_{L^1} = 1$.
This yields exactly the inequality \eqref{restrictresult}.
\end{proof}

\section{Applications}
\label{examplesec}

Note that in this section it is assumed that Theorem \ref{mainthm} has been fully proved; some readers may wish to read Section \ref{necessarysec} for the proof of the necessity of the testing condition \eqref{testing} and return here later.

\subsection{Simple corollaries of Theorems \ref{mainthm} and \ref{restrictstrongtype} for Radon-like transforms}

Even in the linear case, Theorem \ref{mainthm} has interesting and novel implications. In the spirit of a question of Stein \cite{stein1976}, it is possible to give an explicit criterion by which convolution with certain measures on affine varieties are $L^p$-improving for some pairs of exponents $p$ and $q$:
\begin{corollary}
Let $1 \leq k < n$ and suppose $\pi : \R^n \rightarrow \R^k$ is polynomial. Let $D \pi$ be the $k \times n$ Jacobian matrix of $\pi$, let $\Sigma \subset \R^n$ consist of those points in the zero set of $\pi$ at which $D \pi$ is full rank, and let $\mu$ be the measure on $\R^n$ given by
\[ \int f d \mu := \int_{\Sigma} f(y) \frac{w(y) \dH^{n-k}(y)}{|\det D \pi(y) (D \pi(y))^T|^{1/2}} \]
for any nonnegative continuous function $w$ on $\Sigma$ and any nonnegative Borel-measurable function $f$ on $\R^n$. Let $s$ be any positive real number. Then convolution with $\mu$ extends to a bounded map from $L^p(\R^n)$ to $L^{q}(\R^n)$ for all pairs $p,q$ satisfying
\begin{equation} \frac{1}{p} - \frac{1}{q} = \frac{(n-k)s}{n(s+1)} \ \text{ and } \ \left| \frac{1}{p} + \frac{1}{q} - 1 \right| \leq \frac{n-sk}{n(s+1)} \label{interval} \end{equation}
 if and only if
\begin{equation} \mathop{\sup_{M \in \R^{n \times n}}}_{ |\det M| = 1} \int_{\Sigma} \frac{(w(y))^{s} d \mu(y)}{|\det D \pi(y) M M^T (D \pi(y))^T|^{s/2}} < \infty. \label{seeoberlin} \end{equation}
\end{corollary}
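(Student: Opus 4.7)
The plan is to recognize convolution with $\mu$ as a Radon-like transform fitting the framework of Theorem \ref{mainthm}, compute that the testing condition \eqref{testing2} for this transform reduces to \eqref{seeoberlin}, and then extend from a single endpoint to the full interval \eqref{interval} via duality and Riesz--Thorin interpolation.

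\emph{Setup.} Define the smooth incidence relation on $\R^n \times \R^n$ by $\tilde \pi(x,z) := \pi(x-z)$ with weight $\tilde w(x,z) := w(x-z)$; this is polynomial in $x$ of degree $\deg \pi$ uniformly in $z$. The slice $\tilde \Sigma^x$ equals $x - \Sigma$, and $D_z \tilde \pi(x,z) = -D\pi(x-z)$, so Proposition \ref{matprop} gives $\|d_z \tilde \pi(x,z)\| = |\det D\pi(x-z)(D\pi(x-z))^T|^{1/2}$. Changing variables $z \mapsto x - y$ on the slice shows that the coarea measure $d\sigma(z)$ paired with $\tilde w$ pushes forward exactly to $d\mu$, so the associated transform $T f(x) = \int \tilde w(x,z) f(z)\,d\sigma(z)$ is convolution $f * \mu(x)$.

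\emph{Reduction of the testing condition.} Fix $p_1 := (s+1)/s$ and $q_1 := n(s+1)/(ks)$; then $p_1' - 1 = s$ and the scaling \eqref{scaling} with $m=1$ holds. Writing the $\omega$-tuple as the columns of an $n \times n$ matrix $W$ with $|\det W|=1$, Proposition \ref{matprop} yields $\|d_x \tilde \pi(x,z)\|_\omega = |\det D\pi(x-z) WW^T D\pi(x-z)^T|^{1/2}$, which is independent of $x$ after the substitution $y = x - z$. The inner integral in \eqref{testing2} then becomes
\[
\int_\Sigma \frac{w(y)^{p_1'} \,\dH^{n-k}(y)}{\|d\pi(y)\|_\omega^{p_1' - 1}\,\|d\pi(y)\|}
= \int_\Sigma \frac{w(y)^{s}\, d\mu(y)}{|\det D\pi(y) WW^T D\pi(y)^T|^{s/2}},
\]
which, after identifying $W$ with $M$, is exactly the expression in \eqref{seeoberlin}. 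Thus $[[T]] < \infty \iff$ \eqref{seeoberlin}.

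\emph{Endpoints and interpolation.} By Theorem \ref{mainthm}, \eqref{seeoberlin} is equivalent to boundedness of convolution $L^{p_1} \to L^{q_1}$, and one checks that $(p_1,q_1)$ is precisely the lower endpoint (where $1/p + 1/q - 1 = -(n-sk)/(n(s+1))$) of the interval \eqref{interval}. To reach the upper endpoint $(q_1', p_1')$, I use convolution duality: $f \mapsto f * \mu$ is bounded $L^p \to L^q$ iff $g \mapsto g * \check \mu$ is bounded $L^{q'} \to L^{p'}$, where $\check\mu(E) := \mu(-E)$. The reflected measure $\check \mu$ is the measure of the same form associated to the polynomial $\pi(-\cdot)$ on the variety $-\Sigma$, and since $D[\pi(-\cdot)](y) = -D\pi(-y)$, the Jacobian factor in \eqref{seeoberlin} is unchanged after $y \mapsto -y$. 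Thus \eqref{seeoberlin} also gives boundedness at $(q_1',p_1')$, and Riesz--Thorin interpolation between the two endpoints fills in every $(p,q)$ satisfying \eqref{interval}.

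\emph{Necessity.} Conversely, if $T$ is bounded on every pair in \eqref{interval}, it is bounded at $(p_1,q_1)$, which lies on the scaling line \eqref{scaling}. Translation invariance makes $\|T\|$ in the sense of \eqref{bddness} finite, and the necessity direction of Theorem \ref{mainthm} yields \eqref{testing}; by the identification above, this is \eqref{seeoberlin}. The only delicate step in the whole argument is the bookkeeping that matches \eqref{testing2} to \eqref{seeoberlin} — in particular, keeping track of the weight $w$, which sits partly inside the integrand and partly inside $d\mu$ itself — but once this is done the remainder of the proof is a routine combination of Theorem \ref{mainthm}, duality, and interpolation.
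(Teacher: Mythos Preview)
Your proof is correct and follows essentially the same route as the paper's: put $\tilde\pi(x,z)=\pi(x-z)$, identify the testing quantity \eqref{testing2} at $p=(s+1)/s$, $q=np/k$ with \eqref{seeoberlin} via the matrix identity $||d_x\tilde\pi||_\omega^2=\det(D\pi(x-z)\,WW^T\,D\pi(x-z)^T)$, and pass to the second endpoint by duality of convolution. You are slightly more explicit than the paper in two places---you invoke Riesz--Thorin to fill in the interior of the segment \eqref{interval} (the paper only names the endpoints), and you argue duality through $\check\mu$ rather than the paper's observation that $\mu*:L^p\to L^q$ is equivalent to $\mu*:L^{q'}\to L^{p'}$ because the adjoint is reflection-conjugated convolution---but these are cosmetic differences, not substantive ones.
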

\begin{proof}
To apply Theorem \ref{mainthm}, first select a defining function for the full Radon-like transform; the natural choice is to take $\pi(x,y) := \pi(x-y)$. In this case $||d_y \pi|| = ||d_x \pi||$, and by a slight modification of \eqref{equality1}, it must be the case that
\[ ||d_x \pi(x,y)||_\omega := \sqrt{ \det D \pi(x-y) M M^T (D \pi(x-y))^T }, \]
where $M$ is the matrix whose columns are the coordinates of $\omega_1,\ldots,\omega_n$, respectively.
Beyond Theorem \ref{mainthm} itself, the other necessary observation is that $||\mu * f||_q \leq C ||f||_p$ for all $f$ if and only if $||\mu * f||_{p'} \leq C ||f||_{q'}$ because the dual of convolution with $\mu$ is simply convolution conjugated by reflection $x \mapsto -x$ in $\R^n$. Setting $p$ so that $s = p'-1$ gives one of the two extreme points on the line segment \eqref{interval} and the other is obtained by duality. 
\end{proof}
The condition \eqref{seeoberlin} is very much in the same spirit as D.~Oberlin's curvature condition \cite{oberlin2000II}*{Corollary 3} but, when comparing \eqref{seeoberlin} to Oberlin's condition, one sees that \eqref{seeoberlin} is most reasonably understood as a sort of sublevel set inequality on the measure $\mu$ transported to the submanifold $\set{D \pi(y) \in \R^n}{ y \in \Sigma}$ rather than it being a condition ``directly'' on $\Sigma$ itself. To be clear about why this integral inequality is morally a strengthened sublevel set inequality, note that while \eqref{seeoberlin} is not explicitly computing the measure of a sublevel set, the layer cake formula
\[  \begin{split} \int_{\Sigma} & \frac{(w(y))^{s} d \mu(y)}{|\det D \pi(y) M M^T (D \pi(y))^T|^{s/2}} \\
& = s \int_0^\infty  \epsilon^{-s} \mu \left( \set{y \in \Sigma}{ |\det D \pi(y) M M^T (D \pi(y))^T|^{1/2} \leq \epsilon w(y) } \right) \frac{d \epsilon}{\epsilon} 
\end{split} \]
shows the very close relationship between \eqref{seeoberlin} and simpler sublevel set estimates of the sort appearing in Theorem \ref{restrictstrongtype} in the restricted strong-type cases. This is also to be expected when considering the interpolation theory of Lorentz spaces and its implications for the finiteness of \eqref{restrictresult}.

Another consequence of Theorems \ref{mainthm} and \ref{restrictstrongtype} is that restricted strong-type inequalities for Radon-like transforms with fractional-integration-like kernels are virtually automatic along the scaling line $q = np/k$ (though note that results of Secco \cite{secco1999} show that at least in some cases, full $L^p$--$L^q$ boundedness may also hold if one has additional information about the structure of the Radon-like transform).
\begin{corollary}[Fractional Integration]
Suppose $(\Omega,\pi,\Sigma)$ is a smooth incidence relation on $\R^n \times \R^{n'}$ of codimension $k$ such that the Radon-like transform 
\[ T f(x) := \int_{\li{x}} f(y) d \sigma(y) \]
admits a constant $C < \infty$ and an exponent $p \in [1,\infty)$ such that
\begin{equation} ||T f||_{L^{pn/k}(\R^n)} \leq C ||f||_{L^p(\R^{n'})} \label{thebound} \end{equation}
for all Borel functions $f$ on $\R^{n'}$.
Let $W$ be a nonnegative continuous function on $\Sigma$ such that
\[ \sup_{x \in \R^n} \int_{\li{x}} \chi_{W(x,y) \leq \epsilon} d \sigma(y) \leq C' \epsilon^s \]
for all $\epsilon > 0$, where $s$ is any fixed real number strictly greater than $1$ and $C'$ is finite. If $\pi(\cdot,y)$ is a polynomial function for each $y$ with bounded degree as a function of $y$, then the Radon-like transform
\[ \tilde T f(x) := \int_{\li{x}} f(y) \frac{d \sigma(y)}{W(x,y)} \]
admits a finite constant $C''$ such that
\begin{equation} ||\tilde T \chi_E ||_{L^{\tilde pn/k}(\R^n)} \leq C'' |E|^{\frac{1}{\tilde p}} \label{fracend} \end{equation}
for all Borel sets $E \subset \R^{n'}$, where $\tilde p := s p / (s-1)$.
\end{corollary}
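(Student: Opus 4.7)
The plan is to derive the corollary as a direct application of Theorem \ref{restrictstrongtype}, taken with $m = 1$, $w_1 := 1/W$, $p_1 := \tilde p$, $q_1 := \tilde p n/k$, the theorem's own local parameter set to $0$, and $J_2 := \{1\}$ (so that $f_1 = \chi_E$ is a characteristic function). The scaling condition reads $n = k q_1 / p_1$, which is automatic, and the polynomial hypothesis on $\pi$ is precisely the algebraic assumption required by the theorem. The whole content of the proof is therefore to verify $[[T]] < \infty$. Since the only nonempty index class is $J_2$, the relevant testing quantity specializes to
\[ I(x, \omega, \epsilon) := \int_{\li{x}} \chi_{||d_x\pi(x,y)||_\omega < \epsilon W(x,y)^{-1}} W(x,y)^{-1} \, d\sigma(y), \]
and the goal becomes to show $I(x,\omega,\epsilon) \lesssim \epsilon^{\tilde p' - 1}$ uniformly in $x \in \R^n$, in $\omega$ with $|\det\{\omega_i\}| = 1$, and in $\epsilon > 0$.

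The hypothesis \eqref{thebound} on $T$ feeds into this through the necessity direction of Theorem \ref{mainthm} applied with $m = 1$ and weight $w_1 \equiv 1$: it produces a finite $K$ such that, uniformly in $(x,\omega)$,
\[ \int_{\li{x}} ||d_x\pi(x,y)||_\omega^{-(p'-1)} \, d\sigma(y) \leq K \ \text{ when } p > 1, \qquad \text{or} \qquad ||d_x\pi(x,y)||_\omega \geq K^{-1} \ \text{ on } \li{x} \ \text{ when } p = 1. \]
Writing $A(y) := ||d_x\pi(x,y)||_\omega$, Chebyshev converts the $p > 1$ statement into the sublevel bound $\sigma(\{A < \lambda\}) \leq K \lambda^{p'-1}$, which together with the hypothesis $\sigma(\{W < \delta\}) \leq C'\delta^s$ equips both factors in the product $AW$ with explicit power-type sublevel estimates.

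The main step is then a dyadic decomposition in $W$: on each shell $\{W \in [2^j, 2^{j+1})\}$ the inequality $AW < \epsilon$ forces $A < \epsilon 2^{-j}$, so the measure of the intersection is at most $\min(C' 2^{(j+1)s},\ K(\epsilon 2^{-j})^{p'-1})$. Multiplying by $W^{-1} \sim 2^{-j}$ and summing produces geometric series in each regime separated by the crossover $j^\ast := \frac{p'-1}{s+p'-1}\log_2 \epsilon$; since $s > 1$ and $p' > 1$ both tails decay away from $j^\ast$ and the total is controlled by its value at $j = j^\ast$, yielding
\[ I(x,\omega,\epsilon) \lesssim \epsilon^{(s-1)(p'-1)/(s+p'-1)}. \]
The algebraic identity $(s-1)(p'-1)/(s+p'-1) = \tilde p' - 1$, verified from $\tilde p = sp/(s-1)$ together with $p' = p/(p-1)$, is exactly the desired exponent. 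The $p = 1$ case is easier: the uniform lower bound $A \geq K^{-1}$ collapses $I(x,\omega,\epsilon)$ to a layer-cake integral of $W^{-1} \chi_{W < \epsilon K}$ against the hypothesis $\sigma(\{W < \delta\}) \leq C'\delta^s$, which again gives $\lesssim \epsilon^{s-1}$ and matches $\tilde p' - 1 = s - 1$ for $\tilde p = s/(s-1)$.

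With $[[T]] < \infty$ confirmed, Theorem \ref{restrictstrongtype} (with its local parameter equal to $0$) yields \eqref{restrictresult} on $B_R$ with constant independent of $R$, and monotone convergence as $R \to \infty$ produces \eqref{fracend}. The one genuinely technical step is the dyadic crossover estimate above; the balance of sublevel information on $A$ and on $W$ is delicate enough to require both hypotheses simultaneously, but the summation itself is routine.
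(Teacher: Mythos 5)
Your proposal follows the same overall route as the paper: extract a uniform sublevel bound on $||d_x\pi(x,y)||_\omega$ from the necessity direction of Theorem \ref{mainthm} applied to \eqref{thebound}, combine it with the sublevel hypothesis on $W$, and verify the $J_2$-type testing quantity of Theorem \ref{restrictstrongtype}, then let $R\to\infty$. Your dyadic-shell decomposition in $W$ with a $\min$ of the two sublevel bounds is a cosmetic variant of the paper's argument, which splits at a single threshold $\delta$ on $||d_x\pi||_\omega$, bounds the two resulting pieces by $\epsilon^{s-1}\delta^{-(s-1)}$ and $\delta^{p'}\epsilon^{-1}$, and then optimizes $\delta = \epsilon^{s/(s+p'-1)}$; both pick out the same crossover and the same exponent $\tilde p' - 1$. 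Your explicit separation of the $p=1$ case (pointwise lower bound on $||d_x\pi||_\omega$) is cleaner than the paper's presentation, which tacitly assumes $p>1$ in its intermediate formulas.

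There is, however, one genuine technical gap. You take $w_1 := 1/W$ directly, but Theorem \ref{restrictstrongtype} requires $w_1$ to be a \emph{continuous} nonnegative function on $\Sigma$, and $1/W$ need not be: $W$ is only assumed nonnegative continuous, and the hypothesis $\int_{\li{x}} \chi_{W\leq\epsilon}\,d\sigma \leq C'\epsilon^s$ controls the measure of its small sublevel sets without forcing $W$ to be bounded away from zero. The paper circumvents this by proving the testing estimate for every continuous $w$ with $0\leq w\leq 1/W$ pointwise, verifying that the resulting constant is independent of the particular $w$ chosen, applying Theorem \ref{restrictstrongtype} to each such $w$, and finally letting $w(x,y) = \min\{N W(x,y),\, 1/W(x,y)\} \nearrow 1/W$ with Monotone Convergence. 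Without this truncation step, the application of the theorem is not legitimate as stated. The remainder of your argument — the Chebyshev step, the crossover estimate, and the algebraic identity $(s-1)(p'-1)/(s+p'-1) = \tilde p' - 1$ — is correct.
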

\begin{proof}
By Theorem \ref{mainthm}, the bound \eqref{thebound} implies the existence of some constant $C_1$ such that
\[ \int_{\li{x}} \frac{d \sigma(y)}{||d_x \pi(x,y)||^{p'-1}_\omega} \leq C_1 \]
for all $x$ and all $\omega$ with $|\det \{\omega_i\}_{i=1}^n| = 1$. By Chebyshev's inequality, this implies the bound
\[ \sigma( \set{y \in \li{x}}{ ||d_x \pi(x,y)||_\omega \leq \delta}) \leq C_1 \delta^{p_1'-1} \]
for all $\delta > 0$, uniformly in $x$ and $\omega$.
Now let $w$ be any nonnegative continuous function on $\Sigma$ which is bounded above by $1/W(x,y)$ at every point.
For any nonnegative value of $\epsilon$,
\begin{align*}
w \sigma( & \set{y \in \li{x}}{ ||d_x \pi(x,y)||_\omega \leq \epsilon w(x,y)}) \\
& \leq \frac{\sigma}{W}\left( \set{y \in \li{x}}{ ||d_x \pi(x,y)||_\omega \leq \frac{\epsilon}{W(x,y)}} \right).
\end{align*}
Now for any fixed $\delta > 0$, every point $y \in \li{x}$ at which $||d_x \pi(x,y)||_\omega \leq \epsilon / W(x,y)$ must either satisfy $\delta \leq \epsilon / W(x,y)$ or $||d_x \pi(x,y)||_\omega \leq \delta$, so
\begin{align*}
w \sigma( & \set{y \in \li{x}}{ ||d_x \pi(x,y)||_\omega \leq \epsilon w(x,y)}) \\
& \leq \frac{\sigma}{W} \left( \set{y \in \li{x}}{ \delta \leq \frac{\epsilon}{W(x,y)}} \right) \\ & \qquad + \frac{\sigma}{W} \left( \set{y \in \li{x}}{ ||d_x \pi(x,y)||_\omega  \leq \delta \text{ and } \frac{\epsilon}{W(x,y)} \leq \delta } \right)  \\
& \leq \sum_{j=0}^\infty \frac{\sigma}{W} \left( \set{y \in \li{x}}{ 2^{-j-1} \frac{\epsilon}{\delta} < W(x,y) \leq 2^{-j} \frac{\epsilon}{\delta}} \right) \\
& \qquad + \frac{\delta}{\epsilon}  \sigma \left( \set{y \in \li{x}}{ ||d_x \pi(x,y)||_\omega \leq \delta } \right) \\
& \leq \sum_{j=0}^\infty 2^{j+1} \frac{\delta}{\epsilon} C' \left( 2^{-j} \frac{\epsilon}{\delta} \right)^s  + C_1 \delta^{p'} \epsilon^{-1} \leq C_2 (\epsilon^{s-1} \delta^{-(s-1)} + \delta^{p'} \epsilon^{-1})
\end{align*}
for some constant $C_2$ which is finite by virtue of the fact that $s > 1$. Note in particular that $C_2$ does not depend on $w$, $\omega$, $x$, or the parameters $\delta$ and $\epsilon$. Fixing $\delta := \epsilon^{s/(s+p'-1)}$ gives that
\[ \epsilon^{s-1} \delta^{-(s-1)} = \delta^{p'} \epsilon^{-1} = \epsilon^{\tilde p' - 1}, \]
so
\[ w \sigma(  \set{y \in \li{x}}{ ||d_x \pi(x,y)||_\omega \leq \epsilon w(x,y)}) \leq 2 C_2 \epsilon^{\tilde p'-1}. \]
By Theorem \ref{restrictstrongtype}, this uniform sublevel set inequality implies that every Borel set $E$ in $\R^{n'}$ satisfies
\[ \left( \int_{B_R} \left| \int_{\li{x}} \chi_E(y) w(x,y) d \sigma(y) \right|^{\frac{n\tilde p}{k}} dx \right)^{\frac{k}{n \tilde p}} \leq C'' |E|^\frac{1}{\tilde p} \]
for some constant $C''$ which is independent of $R$ and of the particular choice of $w$ for which $w(x,y) \leq 1/W(x,y)$. By Monotone Convergence as $R \rightarrow \infty$ and by fixing $w(x,y) = \min\{N W(x,y), 1/W(x,y)\}$ and letting $N \rightarrow \infty$, the inequality \eqref{fracend} must hold.
\end{proof}

The next two subsections are devoted to the proof of Theorem \ref{multiobj}. As mentioned in the introduction, Section \ref{detsec} establishes some fundamental inequalities for Gram determinants which are relevant to the proof, and the proof itself is carried out in Section \ref{intsec}.

\subsection{Some inequalities for Gram determinants} 
\label{detsec}

Let $H$ be any Hilbert space; for any vectors $v_1,\ldots,v_\ell \in H$, let $G(v_1,\ldots,v_\ell)$ be the associated Gram determinant, i.e.,
\begin{equation} G(v_1,\ldots,v_\ell) := \det \begin{bmatrix} \ang{v_1 , v_1} & \cdots & \ang{v_1 , v_\ell} \\ \vdots & \ddots & \vdots \\ \ang{v_\ell , v_1} & \cdots & \ang{v_\ell , v_\ell} \end{bmatrix}. \label{gram} \end{equation}
The matrix on the right-hand side of \eqref{gram} is always positive semidefinite  because
\[ \sum_{i=1}^{\ell}\sum_{i'=1}^\ell c^i c^{i'} \ang{v_i,v_{i'}} = \left| \left| \sum_{i=1}^\ell c^i v_i \right| \right|^2. \]
This identity guarantees that $G(v_1,\ldots,v_\ell)$ is never negative and that it vanishes if and only if $v_1,\ldots,v_\ell$ are linearly dependent.
The quantity $G(v_1,\ldots,v_\ell)$ can also be understood geometrically via the identity
\begin{equation} |G(v_1,\ldots,v_\ell)|^{-\frac{1}{2}} = \int_{\R^\ell} e^{- \pi || z^1 v_1 + \cdots z^\ell v_\ell||^2} dz; \label{setsize} \end{equation}
to prove this formula, first observe that when the matrix $G$ with entries $G_{ii'} := \ang{v_i,v_{i'}}$ is nonsingular, the fact that it is symmetric and positive-definite means that it has an inverse square root $G^{-1/2}$ satisfying the condition $\det G^{-1/2} = (\det G)^{-1/2} = (G(v_1,\ldots,v_\ell))^{-1/2}$. Making the change of variables $z \mapsto G^{-1/2} z$ gives
\[ \begin{split} \int_{\R^\ell} & e^{- \pi || z^1 v_1 + \cdots z^\ell v_\ell||^2} dz \\ & = |G(v_1,\ldots,v_\ell)|^{-1/2} \int_{\R^\ell} e^{-\pi (z^1)^2 - \cdots - \pi (z^\ell)^2} dz =  |G(v_1,\ldots,v_\ell)|^{-1/2}. \end{split} \]
On the other hand, if $G$ happens to be singular, then both sides of \eqref{setsize} have to be infinite. For the left-hand side, this is automatic; the right side must be infinite by virtue of the fact that when $u \in \R^\ell$ belongs to the kernel of $G$, the identity $||z^1 v_1 + \cdots + z^\ell v_\ell|| = ||(z^1 + t u^1) v_1 + \cdots + (z^\ell + t u^\ell) v_\ell||$ holds for all real $t$, which means that the integrand is constant in the direction of $u$ and therefore the integral is infinite by Fubini (because the integral on any hyperplane will always be nonzero).

The identity \eqref{setsize} has some important consequences. By making various linear changes of variables in the $z$ integral, one can easily verify that
 $G(v_1,\ldots,v_\ell)$ is invariant under permutations of the $v_i$ (verified by permuting indices of the coordinates of $z$) and that
\[ G(v_1,\ldots,v_{\ell-1}, v_\ell + c v_i) = G(v_1,\ldots,v_\ell) \]
for any constant $c$ and any $i \neq \ell$ (verified by the change of variables $z^i \mapsto z^i + c z^\ell$). Similarly, for any orthogonal projection $P$ on $H$,
\begin{equation} G(v_1,\ldots,v_\ell) \geq G(P v_1,\ldots, P v_\ell) \label{projectsmall} \end{equation}
by virtue of the fact that  $||\sum_{i=1}^\ell z^i P v_i|| \leq ||\sum_{i=1}^\ell z^i v_i||$ for all $v_i$ and $z$, so integral on the right-hand side of \eqref{setsize} cannot decrease when the $v_i$ are projected.

Using \eqref{projectsmall}, it is also possible to prove several identities and inequalities for Gram determinant when computed for collections of vectors that have some elements in common. To be more precise, let $u_1,\ldots,u_\ell \in H$ and suppose $P$ is projection onto the orthogonal complement of the span of $u_1,\ldots,u_\ell$. Then for any $v_1,\ldots,v_{\ell'}$, because $P v_{i'} = v_{i'} + \sum_{i=1}^\ell c_i u_i$ for some coefficients $c_i$, it must be the case that
\[ G(u_1,\ldots,u_\ell,v_1,\ldots,v_{\ell'}) = G(u_1,\ldots,u_\ell,P v_1,\ldots,P v_{\ell'}). \]
Applying the definition \eqref{gram} directly to $G(u_1,\ldots,u_\ell,P v_1,\ldots,Pv_{\ell'})$ expresses it as the determinant of a block-form matrix because $\ang{u_i, P v_{i'}} = 0$ for all $i,i'$. In particular, the Gram determinant factors as the product of the determinants of the blocks and therefore
\begin{equation}
 G(u_1,\ldots,u_\ell,v_1,\ldots,v_{\ell'}) = G(u_1,\ldots,u_\ell) G(P v_1,\ldots,P v_{\ell'}). \label{gramfactor} \end{equation}
If $P'$ is projection onto the orthogonal complement of $P v_1,\ldots,P v_{\ell'}$, then for any $w_1,\ldots,w_{\ell''} \in H$,
\begin{align*}
G & (u_1,\ldots,u_\ell,v_1,\ldots,v_{\ell'}) G(u_1,\ldots,u_\ell,w_1,\ldots,w_{\ell''})  \\
 & = (G(u_1,\ldots,u_\ell))^2 G( P v_1,\ldots,P v_{\ell'}) G(P w_1,\ldots, P w_{\ell''}) \\
 & \geq (G(u_1,\ldots,u_\ell))^2 G( P v_1,\ldots,P v_{\ell'}) G(P' P w_1,\ldots, P' P w_{\ell''})\\
 & = (G(u_1,\ldots,u_\ell))^2 G( P v_1,\ldots,P v_{\ell'}, P w_1,\ldots, P w_{\ell''}),
 \end{align*}
 where \eqref{gramfactor} is used to justify the second and last lines and the third line follows from \eqref{projectsmall}. Applying \eqref{gramfactor} again to the final line gives that
 \begin{equation}
 \begin{split}
  G & (u_1,\ldots,u_\ell,v_1,\ldots,v_{\ell'}) G(u_1,\ldots,u_\ell,w_1,\ldots,w_{\ell''}) \\ & \geq G(u_1,\ldots,u_\ell) G(u_1,\ldots,u_\ell,v_1,\ldots,v_{\ell'},w_1,\ldots,w_{\ell''}).
  \end{split} \label{splitfactor}
  \end{equation}
  A virtually identical argument (i.e., replacing each $w_i$ by $P' w_i$ and using \eqref{projectsmall} and \eqref{gramfactor}) shows that the analogue also holds when there are no common $u_i$, i.e., that
   \begin{equation}
 \begin{split}
  G & (v_1,\ldots,v_{\ell'}) G(w_1,\ldots,w_{\ell''}) \geq G(v_1,\ldots,v_{\ell'},w_1,\ldots,w_{\ell''}).
  \end{split} \label{splitfactor2}
  \end{equation}
  
There is one last family of inequalities for the Gram determinant which will be important in the proof of Theorem \ref{multiobj}.
For any fixed $n$-tuple of vectors $v_1,\ldots,v_n \in H$ and any $\ell \in \{1,\ldots,n\}$, let
  \begin{equation} I_\ell := \prod_{j=1}^n G(v_{j},\ldots,v_{j+\ell-1}), \label{cyclicdef} \end{equation}
  where the indices are interpreted periodically (i.e., $v_{n+1} := v_1$, $v_{n+2} := v_2$, etc.). These quantities $I_\ell$ and the various inequalities they satisfy will be of critical importance in the next section. The identity \eqref{splitfactor} implies when $\ell > 1$ that
  \[ G(v_{j},\ldots,v_{j+\ell-1}) G(v_{j+1},\ldots,v_{j+\ell}) \geq G(v_{j+1},\ldots,v_{j+ \ell - 1}) G(v_j,\ldots,v_{j+\ell})  \]
 for every $j$ (again, with indices understood periodically) so taking the product as $j$ ranges from $1$ to $n$ gives that
 $I_\ell^2 \geq I_{\ell-1} I_{\ell+1}$
for each $\ell  \in \{2,\ldots,n-1\}$. This means that the sequence $\{I_1,\ldots,I_n\}$ is log-concave. In particular, it must be the case that
\begin{equation} I_\ell \geq (I_{\ell-1})^{\frac{n-\ell}{n-\ell+1}} (I_n)^\frac{1}{n-\ell+1} = (I_{\ell-1})^{\frac{n-\ell}{n-\ell+1}} (G(v_1,\ldots,v_n))^\frac{n}{n-\ell+1} \label{adjacentG} \end{equation}
for each $\ell \in \{2,\ldots,n-1\}$ (because $I_n = (G(v_1,\ldots,v_n))^n$ by periodicity). Likewise by log concavity,
\begin{equation} I_\ell \geq (I_1)^{\frac{n-\ell}{n-1}} (I_n)^{\frac{\ell-1}{n-1}} \geq (G(v_1,\ldots,v_n))^{\ell} \label{globalG} \end{equation}
by virtue of the identity $I_n = (G(v_1,\ldots,v_n))^n$ and inequality $I_1 \geq G(v_1,\ldots,v_n)$ (which is proved by a repeated application of \eqref{splitfactor2} splitting $G(v_1,\ldots,v_n)$ into individual factors $G(v_i)$ for $i=1,\ldots,n$).
Then \eqref{adjacentG} combined with the identity $1 \geq (G(v_1,\ldots,v_n))^{\ell-1} I_{\ell-1}^{-1}$ (a consequence of \eqref{globalG}) gives
\begin{equation} I_\ell \geq (I_{\ell-1})^{s} (G(v_1,\ldots,v_n))^{s + (1-s) \ell} \text{ for all } s \leq \frac{n-\ell}{n-\ell+1} \label{allG} \end{equation}
for $2 \leq \ell \leq n$.
The upper limit of $s$ cannot be improved.  To see this, let $e_1,\ldots,e_n$ be mutually orthogonal unit vectors and define
$v_1 := N^{n-1} e_1$, $v_i := N^{n-1} e_1 + N^{-1} e_i$ for $i=2,\ldots,n$ for some large real number $N$.  Clearly $N^{n-1} \leq ||v_i|| \leq \sqrt{2} N^{n-1}$ for each $i$ and each $N \geq 1$, 
so $N^{2n(n-1)} \leq I_1 \leq 2^n N^{2n(n-1)}$. 
It is also easy to check that $G(v_1,\ldots,v_n) = G(v_1,v_2 - v_1,\ldots,v_n - v_1) = 1$ for each $N$ because this latter collection of vectors is mutually orthogonal. By \eqref{globalG}, it must be the case that $I_\ell \geq N^{2n(n-\ell)}$ for each $\ell = 1,\ldots,n$. But similarly, $G(v_j,\ldots,v_{j+\ell-1}) = G(v_j,v_{j+1} - v_j,\ldots, v_{j+\ell-1} - v_j) \leq ||v_j||^2 ||v_{j+1}-v_j||^2 \cdots ||v_{j+\ell-1} - v_j||^2 \leq 2 N^{2(n-1)} (2 N^{-2})^{\ell-1} = 2^\ell N^{2(n-\ell)}$, so $I_\ell \leq 2^{n \ell} N^{2n(n-\ell)}$ and therefore
\begin{equation}  \frac{(I_{\ell-1})^s (G(v_1,\ldots,v_n))^{s + (1-s)\ell}}{I_\ell} \geq \frac{N^{2sn(n-\ell+1)}}{2^{n \ell} N^{2 n (n-\ell)}} \rightarrow \infty \label{slimit} \end{equation}
as $N \rightarrow \infty$ if $s > (n-\ell)/(n-\ell+1)$.

\subsection{Proof of Theorem \ref{multiobj}}
\label{intsec}

\begin{proof}[Proof of Theorem \ref{multiobj}.] 
The integral appearing on the left-hand side of \eqref{examp1} can be put in the form \eqref{bddness}  by choosing
\[ \pi_j(x,y_j) := x^{j+\ell} - y^\ell_j + \sum_{i=1}^{\ell-1} |x^{j+i} - y^i_j|^2 \]
for each $j=1,\ldots,\ell$, with indices of $x$ understood as periodic of period $n$.  The first important calculation to carry out is to identify that the coarea measures $\sigma_j$ agree with the Lebesgue measure with respect to $t \in \R^{\ell-1}$ as it appears in \eqref{examp1}. For each $x \in \R^n$, the submanifold $\li{x}_j$ consists of those $y_j \in \R^{\ell}$ belonging to the paraboloid
\[ y^\ell_j = x^{j+\ell} + \sum_{i=1}^{\ell-1} |x^{j+i} - y^i_j|^2. \]
Viewing $y^{\ell}_j$ as a function of $y^{1}_j,\ldots,y^{\ell-1}_j$ gives a parametrization of the paraboloid as a graph, and the $(\ell-1)$-dimensional Hausdorff measure on this graph will equal
\[ \sqrt{1 + \sum_{i=1}^{\ell-1} \left|\frac{\partial y_j^\ell}{\partial y_j^i} \right|^2}  \, dy_j^1 \cdots dy^{\ell-1}_j  = \sqrt{ 1 + 4 \sum_{i=1}^{\ell-1} |y^i_j - x^{j+i}|^2} \, dy_j^1 \cdots dy^{\ell-1}_j. \]
For these maps $\pi_j$,
\[ D_{y_j} \pi_j (x,y_j) := \begin{bmatrix} 2 (y^1_j - x^{j+1}) & \cdots & 2(y^{\ell-1}_j - x^{j+\ell-1}) & -1 \end{bmatrix}, \]
which means that
\[ ||d_y \pi_j(x,y_j)|| = \sqrt{ 1 + 4 \sum_{i=1}^{\ell-1} |y^i_j - x^{j+i}|^2}. \]
Because $d \sigma_j = \dH^{\ell-1} / ||d_y \pi_j||$, it follows that $d \sigma_j = dy^1_j \cdots d y^{\ell-1}_j$ on $\li{x}_j$, and so a simple renaming of variables $y_j^1 = x^{j+1} + t^1,\ldots,y_j^{\ell-1} = x^{j+\ell-1} + t^{\ell-1}$ gives 
\[ \int_{\li{x}_j} f_j(y_j) d \sigma_j(y_j) = \int_{\R^{\ell-1}} f_j(x^{j+1} + t^1,\ldots,x^{j+\ell-1} + t^{\ell-1}, x^{j+\ell} + ||t||^2) dt \]
as is necessary for the application of Theorem \ref{mainthm}.

The matrix $D_x \pi_j(x,y_j)$ is a $1 \times n$ matrix with entry equal to $1$ in column $j + \ell$ and equal to $2 (x^{j+i'} - y^{i'}_j)$ in column $j+i'$ for $1 \leq i' \leq \ell-1$; the entries in all remaining columns are zero. Given $\{\omega_i\}_{i=1}^n$, it follows that
\[ ||d_x \pi_j(x,y_j)||_\omega^2 = \sum_{i=1}^n \left| \omega_i \cdot e_{j+\ell} + 2 \sum_{i'=1}^{\ell-1} (y_j^{i'}-x^{j+i'}) \omega_i \cdot e_{j+i'} \right|^2 \]
where $e_1,\ldots,e_n$ are the standard basis vectors of $\R^n$ and $\cdot$ is the usual inner product.
By Theorem \ref{mainthm} (and the substitution $t = 2 (y^{1}_j - x^{j+1},\ldots,y^{\ell-1}_j-x^{j+\ell-1})$), the constant $C_{p,\ell,n}$ in \eqref{examp1} is finite exactly when
\begin{equation}  \mathop{\sup_{\{\omega_i\}_{i=1}^n}}_{|\det \{\omega_i\}_{i=1}^n|=1} \prod_{j=1}^n \int_{\R^{\ell-1}} \left| \sum_{i=1}^n \left| \omega_{i} \cdot e_{j+\ell} + \sum_{i'=1}^{\ell-1} t^{i'} \omega_i \cdot e_{j+i'} \right|^2 \right|^{-\frac{p'-1}{2}} dt < \infty. \label{finiteprod} \end{equation}
The goal of the rest of the proof is consequently to determine the range of $p$ for which finiteness of \eqref{finiteprod} holds.

Consider for a moment a single term $j$ in the product \eqref{finiteprod}. Let $R_1,\ldots,R_\ell$ be vectors in $\R^n$ defined so that the coordinates of $R_{i}$ are $(e_{j+i} \cdot \omega_1,\ldots,e_{j+i} \cdot \omega_n)$ for each $i=1,\ldots,n$. By virtue of the identity
\[ \sum_{i=1}^n \left|  (e_{j+ \ell} \cdot \omega_i) + \sum_{i'=1}^{\ell-1} t^{i'} (e_{j+i'} \cdot \omega_i) \right|^2 = \left| \left| R_{\ell} + \sum_{i'=1}^{\ell-1} t^{i'} R_{i'} \right| \right|^2, \]
it is possible to choose any $t_* \in \R^{\ell-1}$ and write
\[ \sum_{i=1}^n \left|  (e_{j+ \ell} \cdot \omega_i) + \sum_{i'=1}^{\ell-1} t^{i'} (e_{j+i'} \cdot \omega_i) \right|^2 = \left| \left| R_{\ell} - \sum_{i'=1}^{\ell-1} t_*^{i'} R_{i'} + \sum_{i'=1}^{\ell-1} (t-t_*)^{i'} R_{i'} \right| \right|^2 \]
(where the reader is reminded that the notation $(t-t^*)^{i'}$ refers to the $i'$ coordinate of $t-t^*$).
For a suitable choice of $t_*$, it can always be arranged so that $R_{\ell} - t_*^1 R_1 - \cdots - t_*^{\ell-1} R_{\ell-1}$ is orthogonal to $R_1,\ldots,R_{\ell-1}$, giving that
\begin{align*}
\left| \left| R_{\ell} - \sum_{i'=1}^{\ell-1} t_*^{i'} R_{i'} + \sum_{i'=1}^{\ell-1} (t-t_*)^{i'} R_{i'} \right| \right|^2  & = || P R_{\ell}||^2 + \left| \left| \sum_{i'=1}^{\ell-1} (t-t_*)^{i'} R_{i'} \right| \right|^2 \\  = ||P R_{\ell}||^2& \left( 1 +  \left| \left| \frac{1}{||P R_{\ell}||} \sum_{i'=1}^{\ell-1} (t-t_*)^{i'} R_{i'} \right| \right|^2 \right),
\end{align*}
where $P$ is orthogonal projection onto the orthogonal complement of the span of $R_1,\ldots,R_{\ell-1}$. Now for any $p > 1$,
\begin{align*}
 \int_{\R^{\ell-1}} & \left| \left| R_{\ell} + \sum_{i'=1}^{\ell-1} t^{i'} R_{i'} \right| \right|^{-(p'-1)} dt \\
 & = ||P R_{\ell}||^{-(p'-1)} \int_{\R^{\ell-1}}  \left( 1 + \left| \left| \frac{1}{||P R_{\ell}||} \sum_{i'=1}^{\ell-1} (t-t_*)^{i'} R_{i'} \right| \right|^2 \right)^{-\frac{p'-1}{2}}dt \\
 & = ||P R_{\ell}||^{-(p'-\ell)} \int_{\R^{\ell-1}}  \left(1 + \left| \left| \sum_{i'=1}^{\ell-1} t^{i'} R_{i'} \right| \right|^2 \right)^{-\frac{p'-1}{2}}dt \\
 & = ||P R_{\ell}||^{-(p'-\ell)} \left(G(R_1,\ldots,R_{\ell-1})\right)^{-\frac{1}{2}} \int_{\R^{\ell-1}}  (1+||t||^2)^{-\frac{p'-1}{2}}dt \\
 & = \frac{(G(R_1,\ldots,R_{\ell-1}))^{(p'-\ell)/2}}{(G(R_1,\ldots,R_{\ell}))^{(p'-\ell)/2}} \left(G(R_1,\ldots,R_{\ell-1})\right)^{-\frac{1}{2}} \! \int_{\R^{\ell-1}}  (1+||t||^2)^{-\frac{p'-1}{2}}dt \\
 & =  \frac{(G(R_1,\ldots,R_{\ell-1}))^{(p'-\ell-1)/2}}{(G(R_1,\ldots,R_{\ell}))^{(p'-\ell)/2}} \int_{\R^{\ell-1}}  (1+||t||^2)^{-\frac{p'-1}{2}}dt.
 \end{align*}
 The integral quantity on this final line is independent of $R_1,\ldots,R_\ell$ and is finite if and only if $p'-1 > \ell-1$, i.e., $p < \ell/(\ell-1)$. 
  Taking a periodic product gives that
 \begin{equation} \prod_{j=1}^n \int_{\R^{\ell-1}} \left| \left| R_{j+\ell} + \sum_{i=1}^{\ell-1} t^i R_{j+i} \right| \right|^{-(p'-1)} dt = (C_p)^n \frac{(I_{\ell-1})^{(p'-\ell-1)/2}}{(I_{\ell})^{(p'-\ell)/2}}. \label{pprod} \end{equation}
 By \eqref{allG} and \eqref{slimit}, the ratio  ${(I_{\ell-1})^{(p'-\ell-1)/2}}/{(I_{\ell})^{(p'-\ell)/2}}$ is uniformly bounded for all $R_1,\ldots,R_n$ with $G(R_1,\ldots,R_n) = 1$ (which is true in this case because the matrix with rows $R_1,\ldots,R_n$ equals the matrix with columns $\omega_1,\ldots,\omega_n$) if and only if 
 \[ p' - \ell - 1 \leq \frac{n-\ell}{n-\ell+1} (p' - \ell), \]
 which occurs exactly when $p' \leq n+1$.
Thus it follows that
 \[
 \prod_{j=1}^n \int_{\R^{\ell-1}} \left| \sum_{i=1}^n \left| \omega_{i} \cdot e_{j+\ell} + \sum_{i'=1}^{\ell-1} t^{i'} \omega_i \cdot e_{j+i'} \right|^2 \right|^{-\frac{p'-1}{2}} dt \leq (C_p)^n
 \]
 uniformly in $\{\omega_i\}_{i=1}^n$ with $|\det \{\omega_i\}_{i=1}^n| = 1$ for a finite constant $C_p$ if and only if  $(n+1)/n \leq p < \ell/(\ell-1)$.
 
When $p = \ell/(\ell-1)$, one instead uses the inequality
\begin{align}
 & \left| \set{ t \in \R^{\ell-1}}{  \left| \left| R_{j+\ell} + \sum_{i=1}^{\ell-1} t^{i} R_{j+i} \right| \right| \leq \epsilon }  \right| \nonumber \\
 & \qquad \qquad = \left| \set{ t \in \R^{\ell-1}}{  ||P R_{j+\ell}||^2 + \left| \left| \sum_{i=1}^{\ell-1} t^{i} R_{j+i} \right| \right|^2 \leq \epsilon^2 } \right| \nonumber \\
 & \qquad \qquad \leq \left| \set{ t \in \R^{\ell-1}}{  \left| \left| \sum_{i=1}^{\ell-1} t^{i} R_{j+i} \right| \right| \leq \epsilon} \right| \nonumber \\
 & \qquad \qquad = C_\ell (G(R_{j+1},\ldots,R_{j+\ell-1}))^{-1/2} \epsilon^{\ell-1}. \label{rstype}
 \end{align}
 Taking a product over $j$ and using the inequality $I_\ell \geq (G(R_1,\ldots,R_n))^{\ell} = 1$ gives
 \[ \prod_{j=1}^n \sup_{\epsilon > 0} \epsilon^{-\ell+1} \sigma_j \left( \set{ y \in \li{x}_j}{ ||d_x \pi_j(x,y_j)||_\omega \leq \epsilon} \right) \leq (C_\ell)^n, \]
 which implies by Theorem \ref{restrictstrongtype} that \eqref{examp1} holds in the restricted strong-type sense when $p = \ell/ (\ell-1)$.
\end{proof}

\subsection{Mixed-norm inequalities from Theorems \ref{mainthm} and \ref{restrictstrongtype}}
\label{mixnorm}

Although not specifically phrased in terms of mixed-norm inequalities, the multilinear nature of Theorems \ref{mainthm} and \ref{restrictstrongtype} allows one to deduce certain mixed-norm inequalities for Radon-like transforms. For purposes of clarity (and to take advantage of the existing computations), we consider mixed norm estimates for the model operators
\[ T f(x,x') := \int_{\R^{\ell-1}} f(x + t, x' + ||t||^2) dt \]
for $x \in \R^{\ell-1}$ and $x' \in \R$.  Theorem \ref{restrictstrongtype} implies the following result.
\begin{theorem}
The operator $T$ satisfies a restricted strong-type $L^{\ell/(\ell-1)}(\R^{\ell-1} \times \R) \rightarrow L^{\infty}_{x'} ( L^{\ell}_x)$ inequality.
\end{theorem}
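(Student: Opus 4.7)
The plan is to apply Theorem \ref{restrictstrongtype} in the restricted strong-type regime, with the parameter $x'$ absorbed into the defining function in such a way that the testing quantity $[[T]]$ is manifestly independent of $x'$. For fixed $x' \in \mathbb R$ and any Borel set $E \subset \R^{\ell}$, I would rewrite the $\ell$-th power of $T\chi_E(\cdot, x')$ as the multilinear form
\[
\int_{\R^{\ell-1}} |T\chi_E(x, x')|^\ell \, dx = \int_{\R^{\ell-1}} \prod_{j=1}^\ell T_j \chi_E(x) \, dx,
\]
where each of the $\ell$ identical operators is $T_j f(x) := T f(x, x')$. Take the defining function
\[
\pi_j(x, y_j) := y_j^\ell - x' - \sum_{i=1}^{\ell-1} (y_j^i - x^i)^2,
\]
which is polynomial in $x$ of degree $2$ uniformly in $y_j$. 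Set $n = \ell - 1$, $n_j = \ell$, $k_j = 1$, $p_j = \ell/(\ell-1)$, $q_j = 1$, $w_j \equiv 1$, $s = 0$, and place every index in $J_2$; the scaling identity $\sum_j k_j q_j / p_j = n$ then holds.

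As in the proof of Theorem \ref{multiobj}, the coarea measure $\sigma_j$ on $\li{x}_j$ pushes forward to Lebesgue measure on $\R^{\ell-1}$ via $t := (y_j^1 - x^1, \ldots, y_j^{\ell-1} - x^{\ell-1})$, and $D_x \pi_j$ is the row vector $2 t^T$. Hence for any $\{\omega_i\}_{i=1}^{\ell-1}$ with $|\det \{\omega_i\}_{i=1}^{\ell-1}| = 1$, writing $\Omega$ for the matrix whose columns are the $\omega_i$,
\[
||d_x \pi_j(x, y_j)||_\omega = 2 |\Omega^T t|.
\]
The sublevel set $\{y_j \in \li{x}_j : ||d_x \pi_j||_\omega < \epsilon\}$ therefore corresponds to the ellipsoid $\{t \in \R^{\ell-1} : |\Omega^T t| < \epsilon/2\}$, whose Lebesgue measure is $C_\ell \epsilon^{\ell-1}$ after the unit-Jacobian substitution $s = \Omega^T t$. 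Since $p_j' = \ell$, the $J_2$ factor in \eqref{testing3} evaluates to
\[
\sup_{\epsilon > 0} \epsilon^{1-\ell} \cdot C_\ell \epsilon^{\ell-1} = C_\ell,
\]
uniformly over $x$, $x'$, and $\omega$. The critical observation is that neither $D_x \pi_j$ nor the sublevel-set measurement depends on $x'$ at all, so $[[T]] \leq C_\ell^\ell$ with no $x'$-dependence.

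Theorem \ref{restrictstrongtype} then yields, for every $R > 0$ and every Borel set $E$,
\[
\int_{[-R, R)^{\ell-1}} |T\chi_E(x, x')|^\ell \, dx \leq C |E|^{\ell-1},
\]
with $C$ independent of $R$ and $x'$. Letting $R \to \infty$ by monotone convergence and taking $\ell$-th roots gives $\|T\chi_E(\cdot, x')\|_{L^\ell_x} \leq C^{1/\ell} |E|^{(\ell-1)/\ell}$, and taking the supremum over $x'$ produces exactly the desired restricted strong-type $L^{\ell/(\ell-1)}(\R^{\ell-1} \times \R) \to L^\infty_{x'}(L^\ell_x)$ inequality. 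I do not expect a substantive obstacle: the main point requiring explicit justification is the $x'$-independence of $[[T]]$, which follows from the fact that $\pi_j$ depends on $x'$ only through an additive constant (invisible to $D_x$), while every other computation is a direct specialization of those carried out in Section \ref{intsec}.
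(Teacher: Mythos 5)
Your proof is correct, but it takes a genuinely different route from the paper's. The paper works in the full $\ell$-dimensional base space $(x,x') \in \R^{\ell-1} \times \R$, uses $m=2$ (one copy of the convolution operator with $(k_1,q_1,p_1)=(1,\ell,\ell/(\ell-1))$ in $J_2$, plus a \emph{trivial} projection operator with defining function $\pi'((x,x'),z) := z-x'$ and $(k_2,q_2,p_2)=(1,1,1)$ in $J_0$), and extracts the $L^\infty_{x'}$ norm by duality against an $L^1$ density $g(x')$ paired with the trivial operator; verifying the testing condition there requires the Gram determinant inequality $G(R_1,\ldots,R_{\ell-1})\,G(R_\ell) \geq G(R_1,\ldots,R_\ell) = 1$ from Section \ref{detsec}. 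You instead freeze $x'$, work in the $(\ell-1)$-dimensional base space for $x$ alone with $m=\ell$ identical copies of the operator, and obtain the $L^\infty_{x'}$ norm by observing that the testing constant is uniform in the parameter $x'$ (since $D_x\pi_j$ is insensitive to the additive shift by $x'$), after which the $\operatorname{ess.sup}$ over $x'$ is immediate. Your route is more elementary — the only input is the trivial sublevel set estimate $|\{t : |\Omega^T t| < \epsilon/2\}| = c_{\ell-1}(\epsilon/2)^{\ell-1}$ under the normalization $|\det\Omega|=1$, with no Gram determinant machinery needed — whereas the paper's route exhibits the capacity of Theorem \ref{restrictstrongtype} to mix curved operators ($J_2$) with flat ones ($J_0$) and fits naturally into the framework used earlier in the proof of Theorem \ref{multiobj}. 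Both approaches lose nothing and prove the same endpoint inequality. (One small bookkeeping point you should make explicit: after the reduction $a^\ell = a\cdots a$, all $\ell$ operators $T_j$ coincide and $f_j = \chi_E$ for every $j$, so the requirement that each $f_j$ with $j\in J_2$ be a characteristic function is satisfied, and $\prod_j \|\chi_E\|_{p_j}^{q_j} = |E|^{\ell-1}$ as you use.)
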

The endpoint mixed-norm inequality just stated does not appear to be widely observed in the literature (and although the exponents involved suggest that one might be able to prove this inequality by slicing the paraboloid into spheres of one lower dimension, no elementary argument of this sort appears to suffice when $\ell > 2$).
By interpolation with the standard $L^{(\ell+1)/\ell} \rightarrow L^{\ell+1}$ inequality, it follows that
\[ || T f ||_{L^{q}_{x'} (L^{p'}_x)} \leq C_p ||f||_{p} \ \forall f \in L^p (\R^{\ell-1} \times \R) \]
when $(\ell+1)/\ell \leq p < \ell / (\ell-1)$ and $1/q = 1 - (\ell/p')$ (where, as usual, $p$ and $p'$ are dual exponents). The restricted strong-type $L^{\ell/(\ell-1)}(\R^{\ell-1} \times \R) \rightarrow L^{\infty}_{x'} ( L^{\ell}_x)$ bound is particularly interesting because the full $L^{\ell/(\ell-1)}(\R^{\ell-1} \times \R) \rightarrow L^{\infty}_{x'} ( L^{\ell}_x)$ fails to hold.

\begin{proof}
Just as was computed in the previous section, this operator can be regarded as being of the form \eqref{tjdef} for a defining function $\pi((x,x'),(y,y')) := x' - y' + ||x-y||^2$ when $y \in \R^{\ell-1}$ and $y' \in \R$. Exactly as was the case for \eqref{rstype},
\[ \begin{split} \sup_{\epsilon > 0} \epsilon^{-\ell+1} & \sigma \left( \set{ (y,y') \in \li{(x,x')}}{ ||d_{(x,x')} \pi ((x,x'),(y,y'))||_\omega \leq \epsilon} \right) \\ & \leq C_\ell (G(R_1,\ldots,R_{\ell-1}))^{-1/2} \end{split} \]
where $R_1,\ldots,R_{\ell-1}$ are vectors in $\R^{\ell}$ with the property that $R_i := (\omega_1 \cdot e_i,\ldots,\omega_\ell \cdot e_i)$ for $i=1,\ldots,\ell$ with $e_1,\ldots,e_\ell$ being the standard basis of $\R^{\ell}$. Now suppose that $\pi'((x,x'),z) := z - x'$ for $z \in \R$. For this $\pi'$, $||d_{(x,x')} \pi'||_\omega = ||R_\ell|| = G(R_\ell)^{1/2}$. Therefore
\begin{align*}
\sup_{z'} & \frac{1}{||d_{(x,x')} \pi'((x,x'),z') ||_\omega} \\ & \cdot \sup_{\epsilon > 0} \epsilon^{-\ell+1} \sigma \left( \set{ (y,y') \in \li{(x,x')}}{ ||d_{(x,x')} \pi ((x,x'),(y,y'))||_\omega \leq \epsilon} \right) \\ & \leq C_\ell (G(R_1,\ldots,R_{\ell-1}))^{-1/2} (G(R_\ell))^{-1/2} \leq C_\ell (G(R_1,\ldots,R_\ell))^{-1/2} = C_\ell. 
\end{align*}
Because
\[ \ell = \frac{1 \cdot \ell}{ \ell / (\ell-1)} + 1, \]
Theorem \ref{restrictstrongtype} (with $(k_1,q_1,p_1) := (1,\ell,\ell/(\ell-1))$ and $(k_2,q_2,p_2) := (1,1,1)$ followed by the standard limiting argument $R \rightarrow \infty$) implies that
\[ \int_{\R} \int_{\R^{\ell-1}} \left| T \chi_E(x,x') \right|^{\ell} |g(x')| dx dx' \leq C |E|^{\ell-1} ||g||_{L^1(\R)} \]
for all Borel sets $E \subset \R^{\ell-1} \times \R$, all Borel functions $g$ on $\R$, and some constant $C$ depending only on $\ell$. Fixing $E$ momentarily and applying duality in $g(x')$ gives that
\[ \mathop{\operatorname{ess.sup}}_{x' \in \R}  \int_{\R^{\ell-1}} |T \chi_E(x,x')|^{\ell} dx \leq C |E|^{\ell-1}. \]
Raising both sides to the power $1/\ell$ gives that $T$ satisfies a restricted strong-type $L^{\ell/(\ell-1)} \rightarrow L^\infty_{x'} ( L^\ell_x)$ inequality.

Regarding sharpness, observe that the full $L^{\ell/(\ell-1)} \rightarrow L^\infty_{x'} ( L^\ell_x)$ bound cannot hold, for if it did, it would imply an inequality of the form
\[ \int_{\R} \int_{\R^{\ell-1}} \left| T f(x,x') \right|^{\ell} |g(x')| dx dx' \leq C ||f||_{L^{\ell/(\ell-1)}(\R^{\ell-1} \times \R)}^{\ell} ||g||_{L^1(\R)} \]
for all $f$ and $g$, which would subsequently imply finiteness of 
\[ \sup_{z'} \frac{1}{||d_{(x,x')} \pi'((x,x'),z') ||_\omega} \int_{\li{(x,x')}} \frac{d \sigma}{||d_{(x,x')} \pi||_\omega^{\ell-1}} \]
for all $(x,x')$ and $\omega := \{\omega_i\}_{i=1}^\ell$.
In this case, it has already been observed that the power $\ell-1$ is not large enough to achieve finiteness of the integral on the right-hand side. The fact that the restricted strong-type inequality holds while the full mixed-norm inequality fails suggests that one should not immediately assume that strong endpoint inequalities always hold. For example, there may be situations in which endpoint mixed-norm inequalities outside the range proved by Christ and Erdo\u{g}an \cite{ce2008} actually fail to hold.
\end{proof}

\section{Proof of Theorem \ref{mainthm}: Necessity of the testing condition}
\label{necessarysec} 

This section marks the return to the main goal of proving Theorem \ref{mainthm}; it remains only to establish \eqref{testing} under the assumption that \eqref{bddness} holds. The process is carried out by first computing some basic limits related to Knapp-type examples and then employing these computations to complete the proof. Throughout this section, no algebraic assumptions on the mappings $\pi_j$ are necessary or relevant. 

\subsection{Preliminary Knap-type calculations}

Given a smooth incidence relation $(\Omega,\pi,\Sigma)$ on $\R^{n} \times \R^{n'}$ of codimension $k$, the associated mapping $\pi$ may always be multiplied on the left by any invertible $k \times k$ matrix with smooth entries without changing the definition of the resulting incidence set $\Sigma$. Among all such equivalent choices for $\pi$, there is one which most efficiently captures the geometric properties of the associated Radon-like transform (or at least those geometric properties which matter for the present purposes).
Because the determinant $\det D_x \pi (D_x \pi)^T$ is nonvanishing on $\Sigma$, by restricting the domain of $\pi$ to some smaller set $\Omega' \subset \Omega$ as needed, it may be assumed that the matrix  $[ D_x \pi(x,y) (D_x \pi(x,y))^T ]^{-1/2}$ is uniquely defined (as a positive-definite symmetric matrix) and is a smooth function of $x$ and $y$ on some open set $\Omega \subset \Sigma$ and that $||d_x \pi(x,y)||$ and $||d_y \pi(x,y)||$ are both strictly positive on $\Omega$ as well. (An elementary proof of smoothness of $[ D_x \pi(x,y) (D_x \pi(x,y))^T ]^{-1/2}$ can be established using the Cauchy integral formula; see \cite{gressman2021}*{Appendix}).

For the remainder of this section, let 
\begin{equation} \overline{\pi}(x,y) := [ D_x \pi(x,y) (D_x \pi(x,y))^T ]^{-1/2} \pi(x,y). \label{pibar} \end{equation}
Because $\pi(x,y) = 0$ for any point $(x,y) \in \Sigma$, it must be the case that
\[ \frac{\partial \overline{\pi}}{\partial x^i}(x,y) = [ D_x \pi(x,y) (D_x \pi(x,y))^T ]^{-1/2} \frac{\partial \pi}{\partial x^i} (x,y) \]
at all points $(x,y) \in \Sigma$ for all $i = 1,\ldots,n$ by the product rule: any term in the product rule expansion of $\partial \overline{\pi} / \partial x^i$  in which a derivative falls on the matrix $[ D_x \pi(x,y) (D_x \pi(x,y))^T ]^{-1/2}$ must vanish identically on $\Sigma$ because it will be multiplied by an undifferentiated $\pi$ factor which necessarily vanishes on $\Sigma$. The analogous formula is true for derivatives with respect to $y^i$ as well. Consequently it must be the case that
\[ D_x \overline{\pi} =  [ D_x \pi (D_x \pi)^T ]^{-1/2} D_x \pi \text{ and } D_y \overline{\pi} =  [ D_x \pi (D_x \pi)^T ]^{-1/2} D_y \pi \]
at every point of $\Sigma$. It follows by \eqref{equality1} and \eqref{equality2} that
\begin{equation} ||d_x \overline{\pi}|| = 1  \text{ and } ||d_y \overline{\pi}|| = \frac{||d_y \pi||}{||d_x \pi||} \label{normalized} \end{equation}
at all points of $\Sigma$. Moreover, $D_x \overline{\pi} (D_x \overline{\pi})^T$ is exactly the $k \times k$ identity matrix at every point of $\Sigma$, which means that the rows of $D_x \overline{\pi}$ are orthonormal with respect to the standard inner product on $\R^n$.

To prove necessity of the testing condition \eqref{testing}, one must find suitable families of functions to which the multilinear Radon-Brascamp-Lieb transforms may be applied. The precise functions to be used are as follows:
fix any $x_0 \in \R^n$, and for each $j \in \{1,\ldots,m\}$ such that $p_j > 1$ and each $\delta$ sufficiently small, define
\begin{equation} f_{j,\delta}(y) := \delta^{-\frac{k_j}{p_j}} \chi_{||\overline{\pi}_j(x_0,y)|| < \delta} \left[ \frac{w_j(x_0,y)}{||d_x \pi_j(x_0,y)||} \right]^{p_j'-1} \eta_j(y) \label{bigpfn} \end{equation}
for any fixed, nonnegative, continuous $\eta_j$ with values in $[0,1]$ which is supported close to $\li{x_0}_j$ in the sense of Section \ref{appendix}.  If $j \in \{1,\ldots,m\}$ has $p_j = 1$, take instead
\begin{equation} f_{j,\delta}(y) := \delta^{-k_j} \chi_{||\overline{\pi}_j(x_0,y)|| < \delta} \eta_j(y) \label{ponefn} \end{equation}
for some nonnegative continuous $\eta_j$ with values in $[0,1]$ supported close to $\li{x_0}_j$.  The transforms $T_j$ will be applied to these functions and both sides of \eqref{bddness} will be examined in the limit $\delta \rightarrow 0^+$. The key phenomenon occurring in this limit is that the mass of the left-hand side integral becomes concentrated on the $\delta$-ball centered at $x_0$. This is a regime which corresponds to one of the typical families of Knapp-type examples and, in that sense, the principal innovation of the argument below is not the concept, but rather the level of detail.

There are two key computations regarding the limit $\delta \rightarrow 0^+$, both recorded in the proposition below.
\begin{proposition}
For a smooth incidence relation $(\Omega,\pi,\Sigma)$ on $\R^n \times \R^{n'}$ with codimension $k$, suppose $f$ is any continuous function on $\R^{n'}$ supported close to $\li{x_0}$.  Then for $\overline{\pi}$ as in \eqref{pibar},
\begin{equation} \lim_{\delta \rightarrow 0^+} \frac{1}{\delta^k} \int_{\R^{n'}} f(y) \chi_{||\overline{\pi}(x_0,y)|| < \delta} dy = c_k \int_{\li{x_0}} f(y) ||d_x \pi(x_0,y)|| d \sigma(y). \label{spaceint} \end{equation}
Additionally, the map
\[ \xi \mapsto \int_{\li{(x_0+\delta \xi)}} f(y) \chi_{||\overline{\pi}(x_0,y)|| < \delta} d \sigma(y) \]
is well-defined on $||\xi|| < c$ for any fixed $c < 1$ provided $\delta$ is sufficiently small; this map has the property that
\begin{equation}
\lim_{\delta \rightarrow 0^+} \int_{\li{(x_0 + \delta \xi)}} f(y) \chi_{||\overline{\pi}(x_0,y)|| < \delta} d \sigma(y) = \int_{\li{x_0}} f(y) d \sigma(y) \label{pointint}
\end{equation}
with convergence that is uniform for all $||\xi|| < c$.
\end{proposition}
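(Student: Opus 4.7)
\medskip

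\noindent\textbf{Proof plan.} The main idea in both parts is to use the special normalization $\overline{\pi}$: by construction, on $\Sigma$ one has $||d_x \overline{\pi}|| = 1$ with orthonormal rows in $D_x \overline{\pi}$, and $||d_y \overline{\pi}|| = ||d_y\pi||/||d_x\pi||$. These identities turn the level-set function $\overline{\pi}(x_0,\cdot)$ into a natural ``distance in $k$-direction'' to the slice $\li{x_0}$.

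For \eqref{spaceint}, the plan is to apply the coarea formula to the map $y \mapsto \overline{\pi}(x_0,y)$, whose differential $D_y\overline{\pi}(x_0,y)$ has full rank on a neighborhood of $\li{x_0}$ (this uses $||d_y\pi|| > 0$ on $\Sigma$, hence $||d_y\overline{\pi}|| > 0$ on $\Sigma$, and hence on a neighborhood by continuity; the support assumption on $f$ confines the integral to such a neighborhood). This rewrites the left-hand side as
\[ \frac{1}{\delta^k} \int_{||u|| < \delta} \left[ \int_{\overline{\pi}(x_0,\cdot)^{-1}(u)} \frac{f(y) \, \dH^{n'-k}(y)}{||d_y \overline{\pi}(x_0,y)||} \right] du. \]
The bracketed function of $u$ is continuous at $u = 0$ (by standard continuity of coarea measure under smooth perturbations, as developed in Section \ref{appendix}), and evaluating at $u = 0$ gives the inner integral over $\li{x_0}$ of $f(y) ||d_x\pi(x_0,y)||/||d_y\pi(x_0,y)|| \, \dH^{n'-k}$. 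Using $d\sigma = \dH^{n'-k}/||d_y\pi||$, this is exactly $\int_{\li{x_0}} f(y) ||d_x\pi(x_0,y)|| d\sigma(y)$. The normalized outer integral $\delta^{-k} \int_{||u|| < \delta} du$ tends to $c_k$, the volume of the unit ball in $\R^k$, completing this part.

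For \eqref{pointint}, the heart of the matter is to show that the characteristic function is redundant: on the perturbed slice $\li{(x_0 + \delta\xi)}$, the quantity $\overline{\pi}(x_0,y)$ is automatically smaller than $\delta$ whenever $||\xi|| < c < 1$ and $\delta$ is small enough. Indeed, if $y \in \li{(x_0 + \delta\xi)}$, then $\pi(x_0 + \delta\xi, y) = 0$, and Taylor expansion in $x$ around $x_0$ gives $\pi(x_0,y) = -\delta D_x\pi(x_0,y)\xi + O(\delta^2)$ uniformly in $\xi$ on bounded sets and in $y$ on the support of $\eta$. Multiplying by $[D_x\pi(x_0,y)(D_x\pi(x_0,y))^T]^{-1/2}$ (which is smooth and bounded on a neighborhood of $\li{x_0}$) and using that $D_x\overline{\pi}(x_0,y)$ has orthonormal rows at points of $\Sigma$ (hence $||D_x\overline{\pi}(x_0,y)\xi|| \leq ||\xi|| < c$ on $\Sigma$ and, by continuity, at most $c'$ with $c < c' < 1$ on a neighborhood), yields
\[ ||\overline{\pi}(x_0,y)|| \leq \delta c' + O(\delta^2) < \delta \]
for all sufficiently small $\delta$, uniformly in $\xi$ with $||\xi|| < c$ and in $y$ near $\li{x_0}$.

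With the characteristic function thus eliminated, the integral in \eqref{pointint} reduces to $\int_{\li{(x_0 + \delta\xi)}} f(y) d\sigma(y)$, and the conclusion follows from the continuity of slice integrals under smooth perturbations of the base point, which is established in the appendix. The uniformity in $||\xi|| < c$ is inherited from the uniformity of this continuity statement on compact parameter ranges. I expect the main obstacle to be verifying the uniform continuity of the slice integrals and the coarea inner integral in $u$ with the required precision; these are essentially geometric-measure-theoretic lemmas whose full verification is deferred to the appendix.
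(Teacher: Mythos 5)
Your proof is correct and follows essentially the same approach as the paper's: for \eqref{spaceint}, apply the coarea formula to $\overline{\pi}(x_0,\cdot)$, invoke the continuity of the inner slice integral at $u=0$ from the appendix, and use \eqref{normalized} to convert back to the $\pi$-coarea measure; for \eqref{pointint}, use Taylor's theorem together with orthonormality of the normalized Jacobian rows to show that the cutoff $\chi_{||\overline{\pi}(x_0,y)||<\delta}$ is identically $1$ on $\li{(x_0+\delta\xi)}\cap\supp f$ for small $\delta$, then invoke \eqref{limformula}. The one small stylistic difference is that the paper Taylor-expands $\overline{\pi}$ around $x_0+\delta\xi$, where $(x_0+\delta\xi,y)\in\Sigma$ so the rows of $D_x\overline{\pi}$ are exactly orthonormal, avoiding your brief appeal to continuity of $D_x\overline{\pi}$ off $\Sigma$; your variant can likewise be made exact by noting that the matrix $[D_x\pi(D_x\pi)^T]^{-1/2}D_x\pi$ has orthonormal rows identically wherever $D_x\pi$ is full rank, not merely on $\Sigma$.
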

\begin{proof}
To prove \eqref{spaceint}, one needs only to use the coarea formula for $\overline{\pi}$:
\begin{equation}
  \int_{\R^{n'}} f(y) \chi_{||\overline{\pi}(x_0,y)|| < \delta} dy  = \int_{||s|| < \delta} \left[ \int_{y \, : \, \overline{\pi}(x_0,y) = s} f(y) \frac{\dH^{n'-k}(y)}{||d_y \overline{\pi}(x_0,y)||} \right] ds \label{coareaid1}
 \end{equation}
 The inner integral on the right-hand side is continuous in $s$ at $s = 0$. (This is a consequence of the upcoming Corollary \ref{integralcorollary} in Section \ref{appendix} applied to the family of maps $\pi_s (y) := \pi(x_0,y) - s$.)
Therefore
 \begin{align*}
 \lim_{s \rightarrow 0} \int_{y \, : \, \overline{\pi}(x_0,y) = s} f(y) \frac{\dH^{n'-k}(y)}{||d_y \overline{\pi}(x_0,y)||} &
 =  \int_{\Sigma_{\overline{\pi}(x_0,\cdot)}} f(y) \frac{\dH^{n'-k}(y)}{||d_y \overline{\pi}(x_0,y)||} \\
 & = \int_{\li{x_0}} f(y) ||d_x \pi(x_0,y)|| d \sigma(y),
 \end{align*}
 where the final line uses the computation \eqref{normalized} to relate the coarea measure for $\overline{\pi}(x_0,\cdot)$ to the corresponding coarea measure for $\pi(x_0,\cdot)$. By continuity at $s=0$, the limit of the right-hand side of \eqref{coareaid1} multiplied by $\delta^{-k}$ exists  and   \[  \lim_{\delta \rightarrow 0^+} \frac{1}{\delta^k} \int_{\R^{n'}} f(y) \chi_{||\overline{\pi}(x_0,y)|| < \delta} dy = c_k \int_{\li{x_0}} f(y) ||d_x \pi(x_0,y)|| d \sigma(y), \]
 where $c_k$ is the volume of the $k$-dimensional unit ball.

As for \eqref{pointint},
by Taylor's Theorem, for any $y$ at which $\overline{\pi}(x_0+\delta \xi,y) = 0$, 
\begin{align*}
\overline{\pi}(x_0, y) = \overline{\pi}(x_0+\delta \xi, y) - \left. D_x \overline{\pi} \right|_{(x_0 + \delta \xi, y)} (\delta \xi) + O(\delta^2)
\end{align*}
with the implicit constant in the $O(\delta^2)$ term being uniform for all $y$ belonging to any fixed compact set (in this case, the support of $f$) and all $||\xi|| < c < 1$. Now $\overline{\pi}(x_0+\delta \xi, y) = 0$ and the rows of $\left. D_x \overline{\pi} \right|_{(x_0 + \delta \xi, y)}$ are orthonormal, so it follows that
$|| \overline{\pi}(x_0, y)|| \leq c \delta + O(\delta^2)$. This means that for all $\delta$ sufficiently small, the characteristic function $\chi_{||\overline{\pi}(x_0,y))|| < \delta}$ will be identically $1$ on the support of $f$ inside the integral on the left-hand side of \eqref{pointint}.
Consequently
\[ \int_{\li{(x_0 + \delta \xi)}} f(y) \chi_{||\overline{\pi}(x_0,y)|| < \delta} d \sigma(y) = \int_{\li{(x_0 + \delta \xi)}} f(y) d \sigma(y) \]
for all sufficiently small $\delta$. By \eqref{limformula} from Corollary \ref{integralcorollary}, it follows that this expression converges uniformly in $\xi$ to 
\[ \int_{\li{x_0}} f(y) d \sigma(y) \]
as $\delta \rightarrow 0^+$.
\end{proof}

\subsection{Conclusion of the proof of Theorem \ref{mainthm}}

We are now ready to complete the proof of Theorem \ref{mainthm} by showing that the testing condition \eqref{testing} must hold. 
\begin{proof}[Proof of the necessity of \eqref{testing}]
Recall the definitions \eqref{bigpfn} and \eqref{ponefn} of the testing functions $f_{j,\delta}$ to be used.
By the limit computation \eqref{spaceint}, when $p_j > 1$, it must be the case that
\begin{align}
\lim_{\delta \rightarrow 0^+}&  || f_{j,\delta}||_{p_j} \nonumber \\ & = \left[ \lim_{\delta \rightarrow 0^+} \int_{\R^{n_j}} \left|\delta^{-\frac{k_j}{p_j}} \chi_{||\overline{\pi}_j(x_0,y_j)|| < \delta} \left[ \frac{w_j(x_0,y_j)}{||d_x \pi_j(x_0,y_j)||} \right]^{p_j'-1} \eta_j(y_j) \right|^{p_j} dy_j \right]^{\frac{1}{p_j}} \nonumber \\
 & = \left[ c_{k_j} \int_{\li{x_0}_j} \frac{(w_j(x_0,y_j))^{p'_j}}{||d_x \pi_j(x_0,y_j)||^{p'_j-1}} |\eta_j(y_j)|^{p_j} d \sigma_j(y_j) \right]^{\frac{1}{p_j}} \nonumber \\
& \leq \left[ c_{k_j} \int_{\li{x_0}_j} \frac{(w_j(x_0,y_j))^{p'_j}}{||d_x \pi_j(x_0,y_j)||^{p'_j-1}} \eta_j(y_j) d \sigma_j(y_j) \right]^{\frac{1}{p_j}}. \label{normcalc}
\end{align}
Here the identity $(p_j' - 1)p_j = p_j'$ is used to compute exponents inside the integral; this explains why the final exponent on $w_j$ is $p_j'$. The exponent of $||d_x \pi_j||$ becomes $p_j'-1$ after accounting for the extra factor of $||d_x \pi_j||$ arising from \eqref{spaceint}. Lastly $|\eta_j|^{p_j} \leq \eta_j$ because $\eta_j$ is nonnegative and no greater than one (and $p_j > 1$).
 If $p_j = 1$, then one instead has
\begin{equation}
\begin{split}
\lim_{\delta \rightarrow 0^+}  || f_{j,\delta}||_{1} & = \lim_{\delta \rightarrow 0^+} \int_{\R^{n_j}} \delta^{-k_j} \chi_{||\overline{\pi}_j(x_0,y_j)|| < \delta}  \eta_j(y_j) dy_j  \\
 & = c_{k_j} \int_{\li{x_0}_j} ||d_x \pi_j(x_0,y_j)|| \eta_j(y_j) d \sigma_j(y_j)
 \end{split} \label{normcalc1}
\end{equation}
also by \eqref{spaceint}.

Now consider the integral
\begin{equation}
\int_{B_{c \delta}(x_0)} \prod_{j=1}^m \left| T_j f_{j,\delta} (x) \right|^{q_j} dx  =  \int_{B_c(0)} \prod_{j=1}^m  \left| \delta^{\frac{k_j}{p_j}} T_j f_{j,\delta} (x_0 + \delta \xi ) \right|^{q_j} d\xi \label{integralside}
\end{equation}
(here $B_{c \delta}(x_0)$ indicates the Euclidean ball at $x_0$ with radius $c \delta$), where the change of variables $x \mapsto x_0 + \delta \xi$ is applied to reach the right-hand side, and the Jacobian factor $\delta^n$ is absorbed into the product over $j$ using the fact that $k_1 q_1 /p_1 + \cdots + k_m q_m / p_m = n$.
By \eqref{pointint}, when $p_j > 1$,
\begin{align*}
 \lim_{\delta \rightarrow 0^+} \delta^{\frac{k_j}{p_j}} T_j f_{j,\delta}(x_0 + \delta \xi) & = \lim_{\delta \rightarrow 0^+} \delta^{\frac{k_j}{p_j}} \int_{\li{(x_0 + \delta \xi)}_j} f_{j,\delta} (y)w_j(x_0 + \delta \xi,y_j) d \sigma_j(y_j)
 \\
 & = \int_{\li{x_0}_j} \frac{(w_j(x_0,y))^{p'_j}}{||d_x \pi_j(x_0,y_j)||^{p'_j-1}} \eta_j(y_j) d \sigma_j(y_j) 
 \end{align*}
with uniform convergence for all $||\xi|| < c < 1$ (where we have also used the fact that $w_j(x_0 + \delta \xi,y_j)$ converges uniformly to $w_j(x_0,y_j)$ on the support of $\eta_j$ and that the integrals
\[ \int_{\li{(x_0+\delta \xi)}_j} \frac{(w_j(x_0,y))^{p'_j-1}}{||d_x \pi_j(x_0,y_j)||^{p'_j-1}} \eta_j(y_j) d \sigma_j(y_j) \]
are uniformly bounded in $\xi$ for all small $\delta$ and $||\xi|| < c$, which is a consequence of Corollary \eqref{integralcorollary} in Section \ref{appendix} as well).  If instead $p_j = 1$,
 \[ \lim_{\delta \rightarrow 0^+} \delta^{k_j} T_j f_{j,\delta}(x_0 + \delta \xi) = \int_{\li{x_0}_j} w_j(x_0,y_j) \eta_j(y_j) d \sigma_j(y_j). \]
Therefore the limit as $\delta \rightarrow 0^+$ of \eqref{integralside} exists and 
\begin{align}
\lim_{\delta \rightarrow 0^+}  \int_{B_{c \delta}(x_0)} \prod_{j=1}^m & \left| T_j f_{j,\delta} (x) \right|^{q_j} dx \label{thelimit} \\ & = |B_c(0)|
\prod_{j \, : \, p_j = 1} \left| \int_{\li{x_0}_j} w_j(x_0,y_j) \eta_j(y_j) d \sigma(y_j) \right|^{q_j}  \nonumber \\ & \cdot \prod_{j  \, : \, p_j > 1}  \left| \int_{\li{x_0}_j} \frac{(w_j(x_0,y_j))^{p'_j} \eta_j(y_j)}{||d_x \pi_j(x_0,y_j)||^{p'_j-1}}  d \sigma_j(y_j) \right|^{q_j}. \nonumber
\end{align}
By \eqref{normcalc} and \eqref{normcalc1},
\begin{align*}
\lim_{\delta \rightarrow 0^+} \prod_{j=1}^m ||f_{j,\delta}||_{p_j}^{q_j} & \leq \prod_{j \, : \, p_j = 1} \left[ c_{k_j} \int_{\li{x_0}_j} ||d_x \pi_j(x_0,y_j)|| \eta_j(y_j) d \sigma_j(y_j) \right]^{q_j} \\ & \qquad \cdot \prod_{j \, :  \, p_j > 1}  \left[ c_{k_j}  \int_{\li{x_0}_j} \frac{(w_j(x_0,y_j))^{p'_j}}{||d_x \pi_j(x_0,y_j)||^{p'_j-1}} \eta_j(y_j) d \sigma_j(y_j) \right]^{\frac{q_j}{p_j}}.
\end{align*}
Combining this inequality with \eqref{thelimit}, the boundedness condition \eqref{bddness} therefore implies that
\begin{align*}
\prod_{j \, : \, p_j = 1} & \left[ \frac{\int_{\li{x_0}_j} w_j(x_0,y_j) \eta_j(y_j) d \sigma_j(y_j)}{\int_{\li{x_0}_j} ||d_x \pi_j(x_0,y_j)|| \eta_j(y_j) d \sigma_j(y_j)} \right]^{q_j} \\
& \cdot \prod_{j \, : \, p_j > 1}  \left[ \int_{\li{x_0}_j} \frac{(w_j(x_0,y_j))^{p'_j}}{||d_x \pi_j(x_0,y_j)||^{p'_j-1}} \eta_j(y_j) d \sigma_j(y_j) \right]^{\frac{q_j}{p_j'}} \leq c ||T||
\end{align*}
for some constant $c$ depending only on exponents and dimensions. 

The next steps of the argument involve taking limits to remove the cutoff functions $\eta_j$. For those indices $j$ having $p_j = 1$, fix any point $z_j \in \li{x_0}$ and let $\eta_j$ be replaced by $\eta_j(y_j) \max \{ 1 - R ||y_j-z_j||,0\}$ where $\eta_j$ is the cutoff function supported near $z_j$ given by the second part of Proposition \ref{partitionprop} in the Appendix. The numerator and denominator of the ratio
\[ \frac{\int_{\li{x_0}_j} w_j(x_0,y_j) \eta_j(y_j) \max \{ 1 - R ||y_j-z_j||,0\} d \sigma_j(y_j)}{\int_{\li{x_0}_j} ||d_x \pi_j(x_0,y_j)|| \eta_j(y_j) \max \{ 1 - R ||y_j-z_j||,0\} d \sigma_j(y_j)} \]
do not vanish for any $R > 0$, but as $R \rightarrow \infty$, the support of the integrand is contained in the $1/R$-neighborhood of $z_j$, and so by continuity of both integrands, the limit as $R \rightarrow \infty$ exists and equals 
 \[ \frac{w_j(x_0,z_j)}{||d \pi_j (x_0,z_j)||}. \]
Now taking a supremum over each $z_j$ gives
\begin{align*}
\prod_{j \, : \, p_j = 1} & \left[ \sup_{z_j \in \li{x_0}_j}   \frac{w_j(x_0,z_j)}{||d_x \pi_j(x_0,z_j)||} \right]^{q_j} \\ & \cdot  \prod_{j \, : \, p_j > 1}  \left[ \int_{\li{x_0}_j} \! \! \! \frac{(w_j(x_0,y_j))^{p'_j} \eta_j(y_j) d \sigma_j(y_j)}{||d_x \pi_j(x_0,y_j)||^{p'_j-1}}  \right]^{\frac{q_j}{p_j'}} \leq c ||T||.
\end{align*}
For the remaining indices $j$, let $\eta_j$ be replaced by terms of a sequence of cutoff functions tending to $1$ everywhere on $\li{x_0}_j$. The existence of such a sequence will be shown in Section \ref{appendix} via Proposition \ref{partitionprop}. The sequence constructed there monotonically increases to $1$ everywhere on $\li{x_0}_j$. By Monotone Convergence, it follows that
\begin{align*}
\prod_{j \, : \, p_j = 1} \left| \sup_{z_j \in \li{x_0}_j}  \frac{w_j(x_0,z_j)}{||d_x \pi_j(x_0,z_j)||} \right|^{q_j} \! \! \prod_{j \, : \, p_j > 1}  \! \left[ \int_{\li{x_0}_j} \frac{(w_j(x_0,y_j))^{p'_j} d \sigma_j(y_j)}{||d_x \pi_j(x_0,y_j)||^{p'_j-1}}  \right]^{\frac{q_j}{p_j'}} \! \! \leq c ||T||.
\end{align*}
This is exactly \eqref{testing} when $\omega$ is taken to be the standard $n$-tuple of coordinate vectors. The leap from this case to the full condition \eqref{testing} is accomplished by applying this inequality just derived to the family of defining functions $\pi_{j,M} (x,y_j) := \pi_j(x_0 + M(x-x_0),y_j)$ (with associated weight function $w_{j,M}(x,y_j) := w_j(x_0 + M(x-x_0), y_j$) for any $n \times n$ matrix $M$ of determinant $\pm1$. Because the determinant of $M$ is magnitude one, one trivially has by \eqref{bddness} and a change of variables that 
\[\int \prod_{j=1}^m |T_{j,M} f_j (x)|^{q_j} dx  \leq ||T|| \prod_{j=1}^m ||f_j||_{p_j}^{q_j}  \]
uniformly in $M$ for any nonnegative Borel measurable functions $f_j$.  Therefore
\begin{align*}
\prod_{j \, : \, p_j = 1} & \left| \sup_{z_j \in \li{x_0}_j}  \frac{w_{j,M}(x_0,z_j)}{||d_x \pi_{j,M}(x_0,z_j)||} \right|^{q_j} \\ & \cdot  \prod_{j \, : \, p_j > 1}  \! \left[ \int_{\li{x_0}_j} \frac{(w_{j,M}(x_0,y_j))^{p'_j} d \sigma_j(y_j)}{||d_x \pi_{j,M}(x_0,y_j)||^{p'_j-1}}  \right]^{\frac{q_j}{p_j'}} \! \! \leq c ||T||.
\end{align*}
Now $w_{j,M}(x_0,y_j) = w_j(x_0,y_j)$ for each $y_j \in \li{x_0}_j$. The proof is therefore finished once it is observed that $||d_x \pi_{j,M}(x_0,y_j)|| = ||d_x \pi_j(x_0,y_j)||_\omega$ when $\omega_1,\ldots,\omega_n$ are the columns of $M$. Because one can construct such an $M$ for any $\{\omega_i\}_{i=1}^n$ with $|\det \{\omega_i\}_{i=1}^n|=1$, the testing condition \eqref{testing} must hold.
\end{proof}

To close this section, a final, brief justification of an earlier remark is in order. By \eqref{integralside}, if exponents $p_j$ and $q_j$ were chosen so that
\[ \sum_{j=1}^m \frac{k_j q_j}{p_j} > n, \]
then one would instead have that
\begin{equation} \delta^{n - \sum_{j} \frac{k_j q_j}{p_j}} \int_{B_{c \delta}(x_0)} \prod_{j=1}^m |T_j f_{j,\delta} (x)|^{q_j} dx \label{altlim} \end{equation}
has a limit as $\delta \rightarrow 0^+$, the value of which is the same as the expression on the right-hand side of \eqref{thelimit}. If the product of the weights $\prod_{j=1}^m w_j(x,y_j)$ is anywhere nonzero, the limit of \eqref{altlim} can arranged to be nonzero. However, the product $\prod_{j=1}^m ||f_{j,\delta}||_{p_j}^{q_j}$ still remains finite as $\delta \rightarrow 0^+$ while a finite positive limit for \eqref{altlim} implies that the left-hand side of \eqref{bddness} tends to infinity, so one concludes that no such inequality \eqref{bddness} can hold in this case.

\section{Appendix}
\label{appendix}

This last section is devoted to resolving some technical issues which arise from the fact that the nondegeneracy assumption made for smooth incidence relations (namely, that $\Sigma$ contains only those points of $\Omega$ at which $||d_x \pi(x,y)||$ and $||d_y \pi(x,y)||$ are nonzero) is of a qualitative rather than quantitative nature. Opting for a qualitative approach (i.e., making no assumptions about positive lower bounds for $||d_x \pi(x,y)||$ and $||d_y \pi(x,y)||$ and setting no minimum distance between points of $\Sigma$ and points on the boundary of $\Omega$ or points at which $||d_x \pi(x,y)||$ or $||d_y \pi(x,y)||$ vanish) is in some sense necessitated by the fact that Theorem \ref{mainthm} is invariant under all volume-preserving affine coordinate changes in $x$ and $y$, a small fact which nevertheless plays an important role to conclude Theorem \ref{mainthm}'s proof near the end of Section \ref{necessarysec}. As a consequence, it is possible in principle for the submanifolds $\li{x}$ to have limit points at which certain undesirable things happen (e.g., points on the boundary of $\Omega$ or points at which $||d_y \pi||$ vanishes for which every neighborhood has infinite coarea measure).

The purpose of this section is to establish that there exist natural ways to localize around ``good'' compact sets via continuous functions of compact support such that no undesirable anomalies are encountered. Specifically, it will be established that the manifolds $\li{x}$ and their measures can be exhausted by such good compact sets, and that classical results like the Implicit Function Theorem hold in an appropriate sense around such good compact sets.

The first step is to make precise exactly what these good compact sets are. For the most part, the results in this section are sensitive only to one-sided behavior of defining functions, so as was done in Section \ref{computesubsec}, one of the two sides will be suppressed when possible.

Suppose $\Omega \subset \R^n$ is open and $\pi : \Omega \rightarrow \R^k$ is smooth. Let $E \subset \Omega$ be a compact set such that $\pi(x) = 0$ and $||d_x \pi(x)|| \neq 0$ for every $x \in E$. Any compact set $K$ containing $E$ will be called $\pi$-close to $E$ when $K$ is contained in $\Omega$ and when $\inf_{x \in K} ||d_x \pi(x)|| > 0$. An open neighborhood $U$ of $E$ will be called $\pi$-close to $E$ when its closure is compact and $\pi$-close to $E$.  Finally, a continuous function $\eta$ on $\R^n$ will be said to be supported close to $\Sigma_\pi := \set{x \in \Omega}{\pi(x) = 0 \text{ and } ||d_x \pi(x)|| > 0}$ when it is compactly supported, $\Sigma_\pi \cap \supp \eta$ is compact and $\supp \eta$ is $\pi$-close to $\supp \eta \cap \Sigma_\pi$.

The following proposition establishes that continuous cutoff functions exist which are supported close to $\Sigma_\pi$ and either exhaust all of $\Sigma_\pi$ or concentrate around any single $x \in \Sigma_\pi$.
\begin{proposition}
\label{partitionprop} For any open set $\Omega \subset \R^n$ and any smooth function $\pi : \Omega \rightarrow \R^{k}$, 
\begin{enumerate}
\item There is a sequence $\{\eta_i\}_{i=1}^\infty$ of continuous functions supported close to $\Sigma_\pi$ with values in $[0,1]$ such that 
\begin{equation} \int_{\Sigma_\pi} \eta_i d \sigma_\pi < \infty \label{sequencegood} \end{equation}
for each $i$ and such that $\eta_i(x)$ converges monotonically to $1$ as $i \rightarrow \infty$ for all $x \in \Sigma_\pi$. Here $\sigma_\pi$ is the coarea measure $\dH^{n-k}/||d_x \pi||$.
\item For any $x \in \Sigma_\pi$, there is a continuous function $\eta$ supported close to $\Sigma_\pi$ taking only values in $[0,1]$ having the property that
\begin{equation} 0 < \int_{\Sigma_\pi} \eta f d \sigma_\pi < \infty \label{pointgood} \end{equation}
for any nonnegative continuous function $f$ on $\Sigma_\pi$ which is not equal to zero at $x$.
\end{enumerate}
\end{proposition}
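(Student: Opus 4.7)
The plan is to prove Part (2) first, as it essentially contains the same key ingredients (implicit function theorem plus finite Hausdorff measure on small pieces), and then use a nested exhaustion together with Urysohn's lemma to assemble Part (1).

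For Part (2), given $x_0 \in \Sigma_\pi$, I would choose a closed ball $\overline{B(x_0,r)}$ with $\overline{B(x_0,r)} \subset \Omega$ and $\inf_{\overline{B(x_0,r)}} \|d_x \pi\| \geq c$ for some $c > 0$; this is possible by continuity of $\|d_x \pi\|$ and of the distance to $\partial \Omega$, since $x_0 \in \Omega$ and $\|d_x\pi(x_0)\| > 0$. Choose any continuous $\eta$ on $\R^n$ with values in $[0,1]$, supported in $B(x_0,r)$, with $\eta(x_0) = 1$ (e.g., $\eta(x) := \max\{1 - (2/r)\|x-x_0\|, 0\}$). Then $\supp \eta$ is compact, sits inside $\Omega$, and carries a positive lower bound on $\|d_x \pi\|$, so $\eta$ is supported close to $\Sigma_\pi$ by definition. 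Shrinking $r$ if necessary, the implicit function theorem identifies $\Sigma_\pi \cap \overline{B(x_0,r)}$ with a finite union of $C^1$ graphs of $(n-k)$ variables with uniformly bounded derivatives, so $\H^{n-k}(\Sigma_\pi \cap \supp \eta) < \infty$. Consequently $d\sigma_\pi \leq c^{-1} d\H^{n-k}$ on $\supp \eta$, and $\int \eta f \, d\sigma_\pi$ is finite for every continuous $f$. Positivity when $f(x_0) \neq 0$ follows from joint continuity of $\eta f$ and $f(x_0)\eta(x_0) > 0$.

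For Part (1), I would exhaust $\Sigma_\pi$ by the compact sets
\[ K_i := \{ x \in \R^n : |x| \leq i, \ \dist(x, \R^n \setminus \Omega) \geq 1/i, \ \pi(x) = 0, \ \|d_x \pi(x)\| \geq 1/i \}, \]
which are closed and bounded, hence compact, satisfy $K_i \subset K_{i+1}$, and have $\bigcup_i K_i = \Sigma_\pi$ since any $x \in \Sigma_\pi$ lies in $\Omega$ with $\|d_x\pi(x)\| > 0$ and hence eventually meets all three constraints. Let $\widetilde K_i := \{ |x| \leq i+1, \dist(x,\R^n\setminus\Omega) \geq 1/(i+1), \|d_x \pi\| \geq 1/(i+1)\}$, a compact set contained in $\Omega$ on which $\|d_x \pi\|$ is bounded below, with $K_i$ in its interior. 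By Urysohn's lemma (or directly via distance functions), select a continuous $\eta_i : \R^n \to [0,1]$ with $\eta_i \equiv 1$ on $K_i$ and $\supp \eta_i \subset \widetilde K_i$; each such $\eta_i$ is supported close to $\Sigma_\pi$. Replacing $\eta_i$ by $\widetilde\eta_i := \max(\eta_1,\ldots,\eta_i)$ preserves all of these properties (since $\widetilde K_j \subset \widetilde K_i$ for $j \leq i$, so $\supp \widetilde\eta_i \subset \widetilde K_i$) while enforcing monotonicity, and $\widetilde\eta_i(x) = 1$ for all $i$ large enough that $x \in K_i$, which gives pointwise convergence to $1$ on all of $\Sigma_\pi$.

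Finiteness of $\int_{\Sigma_\pi} \widetilde\eta_i \, d\sigma_\pi$ reduces, via $\widetilde\eta_i \leq 1$ and $\|d_x\pi\| \geq 1/(i+1)$ on $\supp \widetilde\eta_i$, to $\H^{n-k}(\Sigma_\pi \cap \widetilde K_i) < \infty$. This is the main technical point. I would prove it by the same implicit function theorem argument as in Part (2), applied uniformly: at each $x \in \Sigma_\pi \cap \widetilde K_i$ some $k \times k$ minor of $D_x \pi$ has absolute value bounded below by $1/(k!^{1/2}(i+1))$, and on a neighborhood of controlled size (uniform over $\widetilde K_i$ thanks to continuity of the second derivatives of $\pi$ on the compact set $\widetilde K_i$) the corresponding complementary variables parametrize $\Sigma_\pi$ as a $C^1$ graph with uniformly bounded derivatives. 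Covering $\Sigma_\pi \cap \widetilde K_i$ by finitely many such neighborhoods (using compactness) gives a uniform upper bound on $\H^{n-k}(\Sigma_\pi \cap \widetilde K_i)$ in terms of the Euclidean measure of the parameter domains, completing the proof. The only nonroutine step is arranging the uniform-in-$x$ graph parametrization, which is a standard consequence of quantitative IFT plus the $C^2$ modulus-of-continuity bounds furnished by smoothness of $\pi$ on the compact $\widetilde K_i$.
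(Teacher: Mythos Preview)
Your proof is correct and, while sharing the same underlying ingredients (local implicit function theorem plus compactness), organizes them differently from the paper. The paper first builds a countable cover of $\Sigma_\pi$ by ``IFT balls'' $B_{\delta_j}(x_j)$ (balls on which the implicit function theorem guarantees finite nonzero Hausdorff measure and a positive lower bound for $\|d_x\pi\|$), obtained via a rational-ball selection argument, and then defines $\eta_i$ as the maximum over $j\leq i$ of explicit radial bump functions on these balls; Part~(2) then simply reuses one such bump. You instead give Part~(2) directly with a single ball and bump function, and for Part~(1) exhaust $\Sigma_\pi$ by the explicitly defined compacta $K_i$, invoke Urysohn's lemma for the cutoffs, and then prove $\H^{n-k}(\Sigma_\pi \cap \widetilde K_i) < \infty$ by a uniform-IFT-plus-finite-subcover argument. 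Your route makes the monotone exhaustion more transparent (the $K_i$ are manifestly nested and the Urysohn functions are identically $1$ on them), at the cost of a slightly heavier finiteness step; the paper's route avoids any uniform quantitative IFT because each IFT ball already carries its own finite-measure guarantee, but needs the rational-ball trick to pass to a countable subcover. Both are entirely standard and either would serve.
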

\begin{proof}
Consider the sequence $\{\eta_i\}_{i=1}^\infty$ from \eqref{sequencegood}. For each $x \in \Sigma_\pi$, $||d_x \pi(x)||$ is nonzero, so the Implicit Function Theorem guarantees the existence of a (Euclidean) ball $B_\delta(x)$ such that $\H^{n-k}(B_\delta(x) \cap \Sigma_\pi)$ is finite and nonzero, $\overline{B_\delta(x)} \cap \Sigma_{\pi}$ is closed, and $\inf_{x' \in \overline{B_\delta(x)}} ||d_x \pi(x')|| > 0$. Without loss of generality, it may be assumed that $\overline{B_\delta(x)} \subset \Omega$ and $\sigma_\pi (B_\delta(x))$ is finite and nonzero. This ball will for the moment be called the IFT ball at $x$. Let $\mathcal B$ be the collection of all balls in $\R^n$ with rational radius and center point having all rational coordinates. For each $B \in \mathcal B$, let $x_B$ be a point of $\Sigma_\pi$ such that the IFT ball at $x_B$ contains $B$; if no such point exists, $x_B$ remains simply undefined for this $B$. Note that it is \textit{not} necessary for $x_B$ to belong to $B$, only that the IFT ball associated to $x_B$ is sufficiently large that it contains all of $B$. The set of all points $x_B$ defined in this way is countable because $\mathcal B$ is. We claim that the union
\begin{equation} \bigcup \set{B \in \mathcal B}{ x_B \text{ is defined}} \label{defballs} \end{equation}
contains $\Sigma_\pi$. To see this, let $x' \in \Sigma_\pi$. This $x'$ has its own IFT ball $B_{\delta'}(x')$, and for any rational number $\delta'' < \delta'$, there is a rational point $x'' \in B_{\delta'}(x')$ such that $x' \in B_{\delta''}(x'') \subset B_{\delta'}(x')$. Because this ball $B'':= B_{\delta''}(x'')$ belongs to the rational collection $\mathcal B$ and is contained in the IFT ball $B_{\delta'}(x')$, the point $x_{B''}$ is defined for $B''$.  Therefore the entire ball $B''$ belongs to the union \eqref{defballs} and consequently $x'$ belongs to \eqref{defballs} as well.

Because \eqref{defballs} covers $\Sigma_\pi$ and because every $B$ in the union \eqref{defballs} is contained in an IFT ball, 
there must exist a countable collection $\{B_{\delta_j} (x_j)\}_{j=1}^\infty$ of IFT balls such that $\bigcup_{i=1}^\infty B_{\delta_j}(x_j)$ contains $\Sigma_\pi$. For each integer $N$, let 
\[ \varphi_{j,N} (x) := \begin{cases} \min \{ 1, N( \delta_j - ||x - x_j||) \} & x \in B_{\delta_j}(x_j), \\ 0 & x \in \R^{n} \setminus B_{\delta_j}(x_j). \end{cases} \]
For each $j$, $\varphi_{j,N}$ is a continuous function supported on $\overline{B_{\delta_j}(x_j)}$, and $\varphi_{j,N}(x)$ is a nondecreasing function of $N$ which tends to $1$ at all points of $B_{\delta_j} (x_j)$ and is zero everywhere else. Now define
\[ \eta_i(x) := \max_{j \in \{1,\ldots,i\}} \varphi_{j,i}(x) \]
for each $i$. It remains only to show that this sequence accomplishes the requirements of the first part of this proposition.
 First, the support of $\eta_i$ is contained in the union $\bigcup_{j=1}^i \overline{B_{\delta_j}(x_j)}$ and consequently the intersection of the support of $\eta_i$ and $\Sigma_\pi$ is compact. The union $\bigcup_{j=1}^i \overline{B_{\delta_j}(x_j)}$ is also $\pi$-close to $\Sigma_\pi \cap \bigcup_{j=1}^i \overline{B_{\delta_j}(x_j)}$ because $||d_x \pi(x)||$ is bounded below by a positive constant on each ball. Because $\eta_i \leq 1$ everywhere, the integral of $\eta_i$ with respect to $\sigma_\pi$ is bounded above by $\sigma_\pi(B_{\delta_1}(x_1)) + \cdots + \sigma_\pi(B_{\delta_i}(x_i))$, which is finite. Lastly, $\eta_i(x)$ is clearly nondecreasing as a function of $i$ for every $x$, and every $x \in \Sigma_\pi$ is contained in some ball $B_{\delta_j} (x_j)$ for some $j$, which means that $\eta_i(x) \rightarrow 1$ as $i \rightarrow \infty$ because $\varphi_{j,i}(x) \rightarrow 1$ as $i \rightarrow \infty$.

For the second part of the proposition, one can simply take $\eta$ to be $\varphi_{j,1}$ for the value of $j$ such that $x \in B_{\delta_j}(x_j)$. The integral
\[ \int_{\Sigma_\pi} \varphi_{j,1}(x') f(x') d \sigma_\pi(x') \]
is never zero because the integrand always strictly bounded below on some small ball centered at $x$ and the $(n-k)$-dimensional Hausdorff measure of $\Sigma_\pi$ intersected with any small ball centered at $x$ is positive, meaning that the measure of that ball with respect to $\sigma$ is positive.
\end{proof} 

The next step is to establish a number of local continuity/uniformity results for smooth perturbations of the map $\pi$. The main result in this direction is Lemma \ref{paramlemma}, but first an auxiliary proposition is necessary. This proposition can be thought of as a topological extension of the classical Implicit Function Theorem.
\begin{proposition} \label{CIFT}
Let $E,G \subset \R^n$ and $F \subset \R^{\ell}$ be compact sets.  Suppose that $\Phi(x,y)$ is a smooth map from some neighborhood of $E \times F$ into $\R^n$ such that 
\begin{itemize}
\item For each $y \in F$, the map $x \mapsto \Phi(x,y)$ is one-to-one on $E$.
\item For each $y \in F$ and $z \in G$, there is an $x \in E$ such that $\Phi(x,y) = z$.
\item For each $(x,y) \in E \times F$,  $D_x \Phi(x,y)$ is rank $n$.
\end{itemize}
Then there exist neighborhoods $U$ of $E$ and $V$ of $F$, each with compact closure, such that $\overline{U} \times \overline{V}$ is contained in the domain of $\Phi$ and
\begin{itemize}
\item For each $y \in \overline{V}$, the map $x \mapsto \Phi(x,y)$ is one-to-one on $\overline{U}$.
\item For each $y \in \overline{V}$ and $z \in G$, there is an $x \in U$ such that $\Phi(x,y) = z$.
\item For each $(x,y) \in \overline{U} \times \overline{V}$,  $D_x \Phi(x,y)$ is rank $n$.
\end{itemize}
\end{proposition}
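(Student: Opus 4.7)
The plan is to build $U$ and $V$ in stages, working throughout with the auxiliary map $\Psi(x,y) := (\Phi(x,y), y)$, whose Jacobian determinant equals $\det D_x \Phi(x,y)$ and is therefore nonzero wherever $D_x \Phi$ has rank $n$. The rank condition is immediate from continuity: since $|\det D_x \Phi|$ is continuous on a neighborhood of the compact set $E \times F$ and strictly positive there, there exist open neighborhoods $U_0 \supset E$ and $V_0 \supset F$ with compact closures inside the domain of $\Phi$ on which $D_x \Phi$ retains rank $n$ throughout $\overline{U_0} \times \overline{V_0}$. All subsequent work is confined to this set.

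The main obstacle will be establishing injectivity on a uniform neighborhood, as this is a non-local condition. I would argue by contradiction: if no such pair of neighborhoods exists, then for each $n \geq 1$ one obtains $x_n \neq x'_n$ in the $(1/n)$-thickening of $E$ and $y_n$ in the $(1/n)$-thickening of $F$ satisfying $\Phi(x_n, y_n) = \Phi(x'_n, y_n)$. By compactness of $E$ and $F$ one passes to subsequences converging to $x_*, x'_* \in E$ and $y_* \in F$, and continuity forces $\Phi(x_*, y_*) = \Phi(x'_*, y_*)$; the first hypothesis applied at $y_*$ then yields $x_* = x'_*$. However, the Inverse Function Theorem applied to $\Psi$ at $(x_*, y_*)$ provides an open neighborhood on which $\Psi$ is a diffeomorphism onto its image, so $\Phi(\cdot, y_n)$ is injective on a fixed Euclidean ball around $x_*$ for all sufficiently large $n$, contradicting $x_n \neq x'_n$ with both sequences tending to $x_*$.

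The surjectivity condition is handled using the same local diffeomorphism property of $\Psi$. For each $(x_*, y_*) \in E \times F$ there is an open neighborhood $W_{x_*, y_*} \subset \R^n \times \R^\ell$ of $(\Phi(x_*, y_*), y_*)$ on which $\Psi$ is invertible; explicitly, for each $(z, y) \in W_{x_*, y_*}$ there is a unique $x$ near $x_*$ with $\Phi(x, y) = z$. Given any $z \in G$ and $y_* \in F$, hypothesis (ii) provides $x_* \in E$ with $\Phi(x_*, y_*) = z$, putting $(z, y_*) \in W_{x_*, y_*}$. As $(z, y_*)$ ranges over the compact set $G \times F$, the corresponding neighborhoods form an open cover of $G \times F$; extracting a finite subcover and applying a Lebesgue number argument yields some $r > 0$ such that for every $z \in G$ and every $y$ within distance $r$ of $F$, the equation $\Phi(x, y) = z$ admits a solution $x$ within a uniform distance of $E$.

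Finally, choosing $V \subset V_0$ to be an open neighborhood of $F$ small enough that both the injectivity contradiction and the surjectivity covering arguments apply, and $U \subset U_0$ to be a correspondingly sized open thickening of $E$ large enough to contain all preimages produced by the surjectivity step, yields sets on which all three conclusions hold simultaneously on $\overline{U} \times \overline{V}$.
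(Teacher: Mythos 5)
Your proposal follows the same high-level strategy as the paper's proof: the rank condition is a routine consequence of compactness and continuity, the injectivity on a uniform neighborhood is established by contradiction via limiting sequences (essentially identical to the paper's argument), and surjectivity is deduced from the Inverse Function Theorem applied to the auxiliary map $\Psi(x,y) = (\Phi(x,y),y)$ followed by a compactness argument. For the surjectivity step you substitute an open-cover and Lebesgue-number argument where the paper runs a second sequential contradiction; both are legitimate ways of exploiting compactness.

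There is, however, a real gap in the final paragraph where you reconcile the three conclusions. The Lebesgue-number argument produces, for each $z \in G$ and each $y$ within distance $r$ of $F$, a preimage lying within a \emph{fixed} distance $s$ of $E$, where $s$ is governed by the diameters of the IFT neighborhoods chosen for the finite subcover. You then propose to take $U$ to be a thickening of $E$ ``large enough to contain all preimages.'' But the injectivity contradiction argument only yields injectivity of $\Phi(\cdot,y)$ on \emph{some} thickening of $E$, say the $\epsilon_0$-neighborhood for some possibly small $\epsilon_0$; it does not allow you to prescribe the thickening in advance (for instance, $\sin x$ is injective near $0$ only up to radius $\pi/2$, no matter what one would prefer). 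If $s > \epsilon_0$, then any $U$ large enough to contain the preimages is too large for the injectivity conclusion to hold on $\overline U$, and you cannot simply re-run the injectivity argument on the $s$-thickening, since that argument produces injectivity on an unspecified neighborhood rather than a prescribed one. The missing step is either to shrink the IFT neighborhoods $W_{x_*,y_*}$ in advance so that the associated local inverse maps take values inside a ball of radius $\epsilon_0/2$ about $x_* \in E$ (the Lebesgue number shrinks correspondingly but remains positive, and all preimages then land inside the region where injectivity is already known), or to fix $U$ first as a neighborhood of $E$ inside the injectivity region and then argue---as the paper does via a second sequential contradiction---that shrinking $V$ forces the preimages (which converge to points of $E$ as $y \to F$, since the local inverses satisfy $\xi_i(\Phi(x_*,y_*),y_*) = x_* \in E$) into $U$. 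As written, the simultaneous consistency of your choices of $U$ and $V$ is asserted rather than established.
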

\begin{proof}
When $E$, $F$, and $G$ are all singleton sets, this proposition is essentially the Implicit Function Theorem (as in that case, the assumptions are merely that points $x,y,z$ exist with $\Phi(x,y) = z$ and $D_x \Phi(x,y)$ is full rank, and the conclusion are that $\Phi(x',y')$ is one-to-one for all $x'$ in some neighborhood of $x$, provided $y'$ is sufficiently close to $y$, and that the equation $\Phi(x',y') = z$ has a solution $x'$ in the given neighborhood of $x$ for each $y'$ close to $y$). The bulk of the work to establish this proposition is to show that the local information provided by the Implicit Function Theorem can be consistently glued together on (presumably larger) compact sets.

Let $j$ be any positive integer and define $U_j$ to be the $1/j$-neighborhood of $E$ and $V_j$ be the $1/j$-neighborhood of $F$ with respect to Euclidean distance. Compactness of $E \times F$ guarantees that for all sufficiently large $j$, the closure of $U_j \times V_j$ will be contained in the open set on which $\Phi$ is defined and that $||d_x \Phi(x,y)|| > 0$ on $\overline{U_j \times V_j}$ as well. Suppose that among the indices $j$ satisfying these constraints, there is none such that $\Phi(\cdot,y)$ is one-to-one on $\overline{U_j}$ for all $y \in \overline{V_j}$. This implies the existence of twin sequences $(x_j,y_j)$ and $(x_j',y_j)$ belonging to $\overline{U_j} \times \overline{V_j}$ for each large $j$ such that $\Phi(x_j,y_j) = \Phi(x_j',y_j)$ but $x_j \neq x'_{j}$.  Let $\xi_j$, $\xi_j'$, and $\eta_j$ be points in $E$, $E$, and $F$, respectively such that $||x_j - \xi_j|| \leq 1/j$, $||x_j' - \xi'_j|| \leq 1/j$, and $||y_j - \eta_j|| \leq 1/j$ for each large $j$.
Compactness of $E$ and $F$ implies that one may pass to a subsequence ${j_i}$ such that $\xi_{j_i}$, $\xi'_{j_i}$, and $\eta_{j_i}$ converge as $i \rightarrow \infty$ to some points $\xi \in E,\xi' \in E$, and $\eta \in F$, respectively. Consequently $(x_{j_i},y_{j_i}) \rightarrow (\xi,\eta)$ and $(x'_{j_i},y_{j_i}) \rightarrow (\xi',\eta)$ as $i \rightarrow \infty$. Continuity of $\Phi$ also guarantees that $\Phi(\xi,\eta) = \lim_{i \rightarrow \infty} \Phi(x_{j_i},y_{j_i}) = \lim_{i \rightarrow \infty} \Phi(x'_{j_i},y_{j_i}) = \Phi(\xi',\eta)$, but by assumption on $\Phi$, $\Phi(\cdot,\eta)$ is one-to-one on $E$, implying that $\xi = \xi'$. Thus $(x_{j_i},y_{j_i})$ and $(x'_{j_i},y_{j_i})$ both converge to $(\xi,\eta)$ as $i \rightarrow \infty$. But now because $D_x \Phi(\xi,\eta)$ is nonsingular, the Implicit Function Theorem implies the existence of an open neighborhood $A \times B$ of $(\xi,\eta)$ such that $\Phi(\cdot,\eta)$ is one-to-one on $A$ for all $\eta \in B$. Since $(x_{j_i},y_{j_i})$ and $(x'_{j_i},y_{j_i})$ both belong to $A \times B$ for all $i$ sufficiently large, the equality $\Phi(x_{j_i},y_{j_i}) = \Phi(x'_{j_i},y_{j_i})$ forces $x_{j_i} = x'_{j_i}$ for all $i$ sufficiently large, causing a contradiction.

Note the following mild self-improvement of the result just obtained: because the sets $\overline{U_j}$ and $\overline{V_j}$ are decreasing as a function of $j$, it follows that for all sufficiently large $j,j'$ with $j' \geq j$, $\Phi(\cdot,y)$ must be one-to-one on $\overline{U_j}$ for all $y \in \overline{V_{j'}}$ and that $D_x \Phi$ must be full rank on $\overline{U_j} \times \overline{V_{j'}}$.
Suppose, for some such large fixed $j$, that there is no $j' \geq j$
 such that $G \subset \Phi({U_j},y)$ for all $y \in \overline{V_{j'}}$. This would imply the existence of a sequence of points $(y_{j'},z_{j'}) \in \overline{V_{j'}} \times G$ such that $\Phi({U_j},y_{j'})$ does not contain $z_{j'}$. Passing to a subsequence as above implies that there are points $\eta \in F$ and $\zeta \in G$ such that $(y_{j'_i},z_{j'_i}) \rightarrow (\eta,\zeta)$ as $i \rightarrow \infty$. By assumption, there exists an $x \in E$ such that $\Phi(x,\eta) = \zeta$. By the Implicit Function Theorem applied to $\Phi$ at the point $(x,\eta)$, there must exist a continuous function $y,z \mapsto \xi(y,z)$ defined on some neighborhood $B \times C$ of $(\eta,\zeta)$ such that $\xi(\eta,\zeta) = x$ and $\Phi(\xi(y,z),y) = z$ for all $(y,z) \in B \times C$. For all $i$ sufficiently large, $(y_{j'_i},z_{j'_i})$ belongs to $B \times C$ and consequently $x_{j'_i} := \xi(y_{j'_i},z_{j'_i})$ is well-defined for sufficiently large $i$ and satisfies $\Phi(x_{j'_i},y_{j'_i}) = z_{j'_i}$. Because $\xi$ is continuous, $x_{j'_i}$ converges to $x \in E$ as $i \rightarrow \infty$, meaning in particular that $x_{j'_i}$ must indeed belong to ${U_j}$ for $i$ large enough and consequently that $z_{j'_i} \in \Phi(U_j,y_{j'_i})$ after all. Therefore taking $V := V_{j'}$ for some sufficiently large $j' \geq j$ will achieve the desired conclusions of the proposition.
\end{proof} 

Proposition \ref{CIFT} will now be applied to establish Lemma \ref{paramlemma} below, which proves a number of important stability results for the level sets $\Sigma_\pi$ when the map $\pi$ is smoothly perturbed. Throughout the lemma, the parameter $p$ represents the smooth perturbation parameter while $x$ continues to represent the ``spatial'' variable in which the perturbed level sets reside. In particular, given a smooth map $\pi$ from some open subset of $\R^n$ into $\R^k$, the notation $\Sigma_\pi$ continues to refer to those points $x$ in the domain of $\pi$ for which $\pi(x) = 0$ and $||d_x \pi(x)|| \neq 0$.

\begin{lemma}
Suppose that $\pi$ is a smooth map defined on some open subset of $\R^{n} \times \R^{\ell}$ with values in $\R^k$; for each $p \in \R^{\ell}$, let $\Omega_p \subset \R^{n}$ be the set of points $x \in \R^{n}$ such that $(x,p)$ belongs to the domain of $\pi$ and let $\pi_p : \Omega_p \rightarrow \R^k$ be given by $\pi_p(x) := \pi(x,p)$ for each $x \in \Omega_p$. Assuming that $\Omega_0$ is nonempty, let $E \subset \Sigma_{\pi_0}$ be compact and let $K \subset \R^\ell$ be any compact set which is $\pi_0$-close to $E$. There exist an open set $\Omega$ containing $K$ which is $\pi_0$-close to $E$, a $\delta > 0$, and a smooth $\R^n$-valued map $\psi$ defined on a neighborhood of $\overline{\Omega \times B_{\delta}(0)} \subset \R^{n} \times \R^{\ell}$ such that \label{paramlemma}
\begin{itemize}
\item The domain of $\psi$ is contained in the domain of $\pi$ and $K \subset \Omega_p$ for all $||p|| \leq \delta$.
\item For each $||p|| \leq \delta$, the map $\psi_p : \overline{\Omega} \rightarrow \R^n$ given by $\psi_p(x) := \psi(x,p)$ is one-to-one and $D_x \psi_p(x)$ is nonsingular at every $x \in \overline{\Omega}$. Moreover, for each $||p|| \leq \delta$, $ K \subset \psi_p(\Omega) \subset \psi_{p}(\overline{\Omega}) \subset \Omega_p$ and
\[ \pi_p(\psi_p(x)) = \pi_0(x) \text{ for all } x \in \overline{\Omega}. \]
\item For all $x \in \overline{\Omega}$, $\psi_0(x) = x$. As $p \rightarrow 0$, $\psi_p(x)$ converges uniformly on $\overline{\Omega}$ to $x$ and $D_x \psi_p(x)$ converges uniformly on $\overline{\Omega}$ to the $n \times n$ identity matrix for all $x \in \overline{\Omega}$.
\item The set $\Sigma_{0,\Omega} := \Sigma_{\pi_0} \cap \Omega$ is an embedded $(n-k)$-dimensional submanifold of $\R^n$ with finite $(n-k)$-dimensional Hausdorff measure and $||d_x \pi_0||$ is bounded between two positive constants for all $x \in \Sigma_{0,\Omega}$. The $\pi_0$-coarea measure of $\Sigma_{0,\Omega}$ is finite as well.
\item For all $||p|| \leq \delta$, the map $\psi_p$ restricted to $\Sigma_{0,\Omega}$ parametrizes an $(n-k)$-dimensional embedded submanifold on which $\pi_p$ is identically zero. The submanifold contains all points $\Sigma_{p,K} := \set{ x \in K}{\pi_p (x) = 0}$.  The quantity $||d_x \pi_p(\psi_p(x))||$ is bounded uniformly between positive finite constants independent of $p$ on $\Sigma_{0,\Omega}$ and the Hausdorff and $\pi_p$-coarea measures of $\psi_p(\Sigma_{0,\Omega})$ are uniformly bounded for all $||p|| \leq \delta$. The measure $w_p \dH^{n-k}$ on $\Sigma_{0,\Omega}$ which pushes forward to Hausdorff measure $\dH^{n-k}$ on $\psi_p(\Sigma_{0,\Omega})$ has density $w_p$ which converges uniformly to $1$ on $\Sigma_{0,\Omega}$ as $p \rightarrow 0$.
\end{itemize}
\end{lemma}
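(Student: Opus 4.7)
The plan is to construct $\psi$ as a perturbation of the identity in the form $\psi(x,p) = x + A(x)\, v(x,p)$, where $A(x)$ is a smooth right-inverse to $D_x \pi_0(x)$ and $v(x,p)$ is obtained from the Implicit Function Theorem. Since $K$ is $\pi_0$-close to $E$, Proposition \ref{matprop} gives $\det(D_x\pi_0(x)(D_x\pi_0(x))^T) \geq c > 0$ uniformly on $K$, so on an open neighborhood $\Omega' \supset K$ (itself $\pi_0$-close to $E$) one may take $A(x) := (D_x\pi_0(x))^T [D_x\pi_0(x)(D_x\pi_0(x))^T]^{-1}$, smooth and satisfying $D_x\pi_0(x) A(x) = I_k$. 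Define $F(x,p,v) := \pi(x + A(x) v, p) - \pi_0(x)$, so that $F(x,0,0) = 0$ and $\partial_v F(x,0,0) = I_k$ for every $x \in \Omega'$.

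First I would apply the classical Implicit Function Theorem at each $x_0 \in K$ to obtain a neighborhood $U_{x_0}$, numbers $\delta_{x_0}, \rho_{x_0} > 0$, and a smooth $v_{x_0} : U_{x_0} \times B_{\delta_{x_0}}(0) \to B_{\rho_{x_0}}(0) \subset \R^k$ solving $F = 0$ uniquely within that ball. Any two such local solutions on overlapping charts both vanish at $p=0$ and are continuous in $p$; by shrinking $\delta_{x_0}$ they lie in a common region of injectivity of $v \mapsto F(x,p,v)$ and so must agree on overlaps. A finite subcover of $K$ and the minimum of the $\delta$'s then produces a single smooth $v : \Omega'' \times B_{\delta_0}(0) \to \R^k$ on an open $\Omega'' \subset \Omega'$ containing $K$. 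Set $\psi(x,p) := x + A(x) v(x,p)$; by construction $\pi_p(\psi_p(x)) = \pi_0(x)$, $\psi_0(x) = x$, and the IFT formula for $\partial_p v$ at $(x,0,0)$ combined with smoothness shows $\psi_p \to \mathrm{id}$ and $D_x\psi_p \to I_n$ uniformly on compact subsets of $\Omega''$ as $p \to 0$.

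Next, to globalize the diffeomorphism property, I would invoke Proposition \ref{CIFT} with $\Phi := \psi$ and parameter $y := p$, choosing $E' := \overline{U}$ for any open $U$ with $K \subset U \subset \overline{U} \subset \Omega''$, $F' := \{0\}$, and $G := K$. At $p=0$, $\psi_0 = \mathrm{id}$ is one-to-one on $E'$, $D_x\psi_0 = I_n$ has full rank, and $\psi_0(E') = E' \supset K$, so the hypotheses of that proposition hold. Its conclusion furnishes an open $\Omega$ with $E' \subset \Omega \subset \overline\Omega \subset \Omega''$ (still $\pi_0$-close to $E$) and some $\delta \in (0, \delta_0]$ such that $\psi_p$ is one-to-one on $\overline\Omega$, $D_x\psi_p$ is nonsingular there, and $K \subset \psi_p(\Omega)$ for all $\|p\| \leq \delta$. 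Shrinking $\delta$ further using uniform continuity of $\psi$ ensures $\psi_p(\overline\Omega) \subset \Omega_p$, i.e.\ the image lies in the domain of $\pi_p$.

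Finally I would verify the submanifold and measure statements. The uniform lower bound on $\|d_x\pi_0\|$ on $\overline\Omega$ together with the IFT exhibits $\Sigma_{0,\Omega}$ locally as a graph of a smooth function over an $(n-k)$-dimensional coordinate plane, so it is an embedded $(n-k)$-submanifold with finite $\dH^{n-k}$-measure (by compactness of $\overline\Omega$), and the coarea measure is finite by the same lower bound. Differentiating the identity $\pi_p \circ \psi_p = \pi_0$ gives $D_x\pi_p(\psi_p(x)) = D_x\pi_0(x)(D_x\psi_p(x))^{-1}$, so uniform two-sided control on $D_x\psi_p$ yields uniform two-sided bounds on $\|d_x \pi_p(\psi_p(x))\|$ (using Proposition \ref{matprop}), and hence uniform bounds on the Hausdorff and coarea measures of $\psi_p(\Sigma_{0,\Omega})$. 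The pushforward density $w_p$ is, at each $x$, the Jacobian of the restriction of $\psi_p$ to $T_x\Sigma_{0,\Omega}$, so $D_x\psi_p \to I_n$ uniformly on $\overline\Omega$ forces $w_p \to 1$ uniformly. The inclusion $\Sigma_{p,K} \subset \psi_p(\Sigma_{0,\Omega})$ is immediate from surjectivity: any $x' \in K$ with $\pi_p(x') = 0$ equals $\psi_p(x)$ for some $x \in \Omega$, whence $\pi_0(x) = \pi_p(\psi_p(x)) = 0$ puts $x \in \Sigma_{0,\Omega}$. The real technical point is the gluing of the local IFT solutions, which is dispatched by the uniqueness clause of IFT plus uniform shrinking of $\delta$; after that Proposition \ref{CIFT} does the global topological work automatically.
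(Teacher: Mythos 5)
Your proposal is correct and follows the same overall architecture as the paper's proof: build $\psi$ of the form $x$ plus a right-inverse times a parameter-dependent IFT solution, then invoke Proposition~\ref{CIFT} a second time (with $E = G := K$, $F := \{0\}$, $\Phi := \psi$) to globalize injectivity and surjectivity, and finally deduce the measure-theoretic bullet points from the uniform control on $D_x\psi_p$. The one structurally different step is the first: the paper obtains the global solution map $\zeta(x,p)$ by a single application of Proposition~\ref{CIFT} in the degenerate regime $E = G = \{0\} \subset \R^k$, $F = K \times \{0\}$ (which is exactly a compact-parameter IFT), whereas you apply the classical pointwise IFT at each $x_0 \in K$ and then glue the local solutions by the uniqueness clause on overlaps. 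Your gluing argument is correct, but the phrase ``by shrinking $\delta_{x_0}$ they lie in a common region of injectivity'' is doing nontrivial work: you need a compactness argument to guarantee a \emph{uniform} injectivity radius for $v \mapsto F(x,p,v)$ over the (relatively compact) overlap region in $(x,p)$. That is precisely the content Proposition~\ref{CIFT} packages, so the paper's route is slightly more economical. Two further cosmetic deviations are harmless: (i) you use the Moore--Penrose right inverse $A(x)$ so that $\partial_v F(x,0,0) = I_k$ rather than the paper's $(D_x\pi_0)^T$, which gives $D_x\pi_0 (D_x\pi_0)^T$; and (ii) you establish $\psi_p(\overline\Omega) \subset \Omega_p$ and the two-sided bounds on $\|d_x\pi_p(\psi_p(x))\|$ by uniform continuity and by differentiating $\pi_p \circ \psi_p = \pi_0$, respectively, whereas the paper gets the first automatically from $\zeta$ being defined and the second by pre-restricting the domain of $\psi$ before the second Proposition~\ref{CIFT} application. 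Both routes are valid; yours is a little more computational, the paper's a little more structural.
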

\begin{proof} 
Let $E$ and $K$ be as indicated. Because $K$ is $\pi_0$-close to $E$, it is in particular true that $||d_x \pi_0(x)||$ must be well-defined and strictly positive on some neighborhood of $K$. Consequently, any point $x \in K$ at which $\pi_0(x) = 0$ automatically belongs to $\Sigma_{\pi_0}$ (i.e., it has $||d_x \pi_0(x)|| > 0)$. Continuity of $\pi_0$ and compactness of $K$ guarantee that $K \cap \Sigma_{\pi_0}$ is compact, and one may invoke the Implicit Function Theorem to cover it by finitely many balls $B$ of finite radius such that that $K \cap \Sigma_{\pi_0} \cap B$ is an embedded submanifold of dimension $n-k$ in $\R^n$ with finite $(n-k)$-dimensional Hausdorff measure as well. Thus, geometrically, the level sets of $\pi_0$ are straightforward on $K$ (or any set which is $\pi_0$-close to $E$).

Now consider the map
\[ \Phi(x,\zeta,p) := \pi(x + (D_x \pi(x,0))^T \zeta,p) - \pi(x,0),  \]
which is well-defined on some open subset containing $K \times \{0\} \times \{0\} \subset \R^{n} \times \R^{k} \times \R^{\ell}$. At any point $(x,0,0)$ for $x \in K$, the Jacobian matrix $D_\zeta \Phi(x,0,0)$ equals $D_x \pi_0(x) (D_x \pi_0(x))^T$, which is nonsingular because $D_x \pi_0(x)$ is full rank at all points of $K$.  Additionally, for all $(x,p) \in K \times \{0\}$, $\Phi(x,0,p) = 0$. Proposition \ref{CIFT} will now be applied in a slightly counterintuitive way: $\Phi(x,\zeta,p)$ will be regarded as a function of $\zeta \in \R^k$ with parameters $(x,p) \in \R^{n+\ell}$. The set $E$ from Proposition \ref{CIFT} will simply be $\{0\} \subset \R^k$, and likewise $G := \{0\} \subset \R^k$. The set $F$ will be $K \times \{0\}$.
By Proposition \ref{CIFT}, there exists a neighborhood of $K \times \{0\} \subset \R^{n} \times \R^{\ell}$, which without loss of generality may be written $U_1 \times B_{\delta_1}(0)$ for some neighborhood $U_1$ of $K$ and some $\delta_1 > 0$, and an $\epsilon > 0$ such that 
$\zeta \mapsto \Phi(x,\zeta,p)$ is one-to-one for all $(x,p) \in \overline{U_1} \times \overline{B_{\delta_1}(0)}$ when $\zeta$ belongs the ball $\overline{B_\epsilon(0)}$. Moreover $\Phi(x,\zeta,p) = 0$ has a unique solution $\zeta \in B_{\epsilon}(0)$ for all $(x,p) \in \overline{U_1} \times \overline{B_{\delta_1}(0)}$. Thus there is a map $\zeta : \overline{U_1} \times \overline{B_{\delta_1}(0)} \rightarrow B_{\epsilon}(0)$ for which $\Phi(x,\zeta(x,p),p) = 0$. Because $D_\zeta \Phi$ is full rank for all $(x,\zeta,p) \in \overline{U_1} \times \overline{B_\epsilon(0)} \times \overline{B_{\delta_1}(0)}$, the Implicit Function Theorem guarantees that this map $\zeta(x,p)$ is smooth on $U_1 \times B_{\delta_1}(0)$. Since $\zeta \mapsto \Phi(x,\zeta,p)$ is one-to-one for any $(x,p) \in \overline{U_1} \times \overline{B_{\delta_1}(0)}$ and $\Phi(x,0,0) = 0$ for all $x \in \overline{U_1}$, it follows that $\zeta(x,0) = 0$ for all $x \in \overline{U_1}$. Shrinking $U_1$ as necessary, it may be assumed that $U_1$ is $\pi_0$-close to $E$. 

Now consider the smooth function $\psi(x,p) := x + (D_x \pi(x,0))^T \zeta(x,p)$. This map is well-defined on $\overline{U_1} \times \overline{B_{\delta_1}(0)}$ and smooth on the interior. When $p = 0$, $\psi(x,0)$ is the identity map on $\overline{U_1}$, so in particular $D_x \psi(x,0)$ is the identity matrix on all of $K$. It is also trivially true that all $z \in K$ admit an $x \in K$ such that $\psi(x,0) = z$ (namely, $x = z$).  Let us also temporarily restrict the domain of $\psi$ to only those pairs $(x,p) \in U_1 \times B_{\delta_1}(0)$ which are at a distance at least $\delta_0 > 0$ to the boundary of the domain of $\pi$ and have such $||d_x \pi(x,p)|| > c$ and $||d_x \pi(\psi_p(x),p)|| > c$ for some fixed $c > 0$. This can clearly be done in such a way that the set $K \times \{0\}$ still belongs entirely to the domain of $\psi$ because $K$ is compact and $||d_x \pi(x,0)|| = ||d_x \pi(\psi_0(x),0)||$ is never equal to zero for any $x \in K$. Now apply Proposition \ref{CIFT} again: let $E$ and $G$ both equal the set $K \subset \R^n$ and let $F$ equal $\{0\} \subset \R^\ell$ and apply to the map $\phi(x,p)$. It follows that there exists an open set $\Omega$ containing $K$ and a $\delta > 0$ such that $\psi$ is defined on $\overline{\Omega} \times \overline{B_{\delta}(0)}$ and has the property that $D_x \psi(x,p)$ is nonsingular on $\overline{\Omega} \times \overline{B_{\delta}(0)}$, that $\psi_p(x) := \psi(x,p)$ is one-to-one on $\overline{\Omega}$ for each $||p|| \leq \delta$, and $K \subset \psi_p(\Omega)$ for all $||p|| \leq \delta$. Because the domain of $\psi$ was temporarily restricted before the application of the proposition, $\Omega$ is $\pi_0$-close to $E$, the closure of the domain of $\psi$ is contained in the domain of $\pi$, and $||d_x \pi_p(x)|| \geq c$ for all $(x,p) \in \overline{\Omega} \times \overline{B_\delta(0)}$. (And also note that $\psi$ is defined and smooth on $U_1 \times B_{\delta_1}(0)$, which is an open neighborhood of $\overline{\Omega} \times \overline{B_\delta(0)}$.) 

It remains only work through the promised bullet points:
\begin{itemize}
\item Taking the domain of $\psi$ to be $U_1 \times B_{\delta_1}(0)$, it is immediate by the first application of Proposition \ref{CIFT} that this product set is contained in the domain of $\pi$. Moreover $K \subset U_1$, so $K \times B_{\delta_1}(0)$ is contained in the domain of $\pi$, meaning $K \subset \Omega_p$ for all $||p|| < \delta_1$.
\item It is a direct consequence of the second application of Proposition \ref{CIFT} that $\psi_p$ is one-to-one on $\overline{\Omega}$ for all $||p|| \leq \delta$ and that the Jacobian $D_x \psi_p(x)$ is never singular on $\overline{\Omega} \times \overline{B_\delta(0)}$. It is also immediate that $K \subset \psi_p(\Omega)$ for each $||p|| \leq \delta$. It is less obvious that $\psi_p(\overline{\Omega}) \subset \Omega_p$ for each $||p|| \leq \delta$. To see why, observe that $\zeta(x,p)$ is defined on all of $U_1 \times B_{\delta_1}(0)$, which necessarily requires that $\psi(x,p) \in \Omega_p$ for any $x \in U_1$ and $||p|| < \delta_1$ (by virtue of the definitions of $\zeta(x,p)$ and $\psi(x,p)$).
Because $\overline{\Omega} \times \overline{B_{\delta}(0)}$ is contained in $U_1 \times B_{\delta_1}(0)$, it must be the case that $\psi_p(\overline{\Omega}) \subset \Omega_p$ for all $||p|| \leq \delta$. Finally, the identity $\pi_p(\psi_p(x)) = \pi_0(x)$ follows directly from the definitions of $\zeta(x,p)$ and $\psi(x,p)$.
\item The observation $\zeta(x,0) = 0$ for all $x \in \overline{U_1}$ guarantees that $\psi_0(x) = x$ for all $x \in U_1$. Smoothness of $\psi(x,p)$ in both variables and compactness of $\overline{\Omega} \subset U_1$ give the desired uniform convergence properties as $p \rightarrow 0$.
\item Because $\overline{\Omega}$ is $\pi_0$-close to $E$, it is necessary (just as was shown for $K$ itself at the beginning of the proof of this lemma) for $\Sigma_{0,\Omega}$ to be an embedded submanifold with finite $(n-k)$-dimensional Hausdorff measure. Because $\overline{\Omega}$ is compact and $||d_x \pi_0(x)||$ is strictly positive there, $||d_x \pi_0(x)||$ must be uniformly bounded above and below on $\overline{\Omega}$ by positive constants, which implies comparability of the Hausdorff and $\pi_0$-coarea measures on $\Sigma_{0,\Omega}$.
\item The image $\psi_p(\Sigma_{0,\Omega})$ is an embedded submanifold because $\psi_p$ is one-to-one and has everywhere nonsingular Jacobian on some open set containing the (compact) closure of $\Sigma_{0,\Omega}$. On this submanifold, $\pi_p$ is identically zero simply because $\pi_p(\psi_p(x)) = \pi_0(x)$ and $\pi_0(x)$ vanishes on $\Sigma_{0,\Omega}$. The image $\psi_p(\Sigma_{0,\Omega})$ contains all points in $K$ at which $\pi_p = 0$ by virtue of the fact that $K$ belongs to the domain of $\psi^{-1}_p$ for all $||p|| \leq \delta$: if $\pi_p(x) = 0$ for some $x \in K$ and some $||p|| \leq \delta$,  $0 = \pi_p(x) = \pi_p( \psi_p( \psi_p^{-1}(x))) = 0  = \pi_0(\psi_p^{-1}(x))$, so $\psi_p^{-1}(x)$ is the point in $\Omega$ whose image via $\psi_p$ is $x \in K$. Because of the temporary restriction of the domain of $\psi$ imposed before the second application of Proposition \ref{CIFT}, it must be the case that $||d_x \pi(x,p)||$ and $||d_x \pi(\psi_p(x),p)||$ are bounded uniformly above and below by positive constants on all of $\overline{\Omega} \times \overline{B_{\delta}(0)}$, which guarantees comparability of $(n-k)$-dimensional Hausdorff measure and the $\pi_p$-coarea measure on the submanifold $\psi_p(\Sigma_{0,\Omega})$.
Finiteness of the Hausdorff measure follows from the fact that the norms of $||D_x \psi_p||$ and $||D_x \psi_p^{-1}||$ are necessarily uniformly bounded above and below on all of $\overline{\Omega}$ by continuity of $D_x \psi_p$ and the fact that it is everywhere full rank.  The density $w_p(x)$ on $\Sigma_{0,\Omega}$ that pushes forward to Hausdorff measure must equal $\sqrt{ \det (D_x \psi_p(v_i) \cdot D_x \psi_p(v_j))_{i,j \in \{1,\ldots,n-k\}}}$ for any $n-k$ orthonormal vectors $v_1,\ldots,v_{n-k}$ tangent to $\Sigma_{0,\Omega}$ at the point $x \in \Sigma_{0,\Omega}$. Since $D_x \psi_p$ converges uniformly to the identity on $\overline{\Omega}$, this density must converge uniformly to $1$ as $p \rightarrow 0$.
\end{itemize}
This finishes the proof of the lemma.
\end{proof} 

A key corollary of Lemma \ref{paramlemma} is that integrals on $\Sigma_{\pi_p}$ are continuous functions of $p$ at $p=0$ when the integrands are continuous and supported close to $\Sigma_{\pi_0}$.
\begin{corollary}
If $\pi_p$ is a smooth family of maps from open sets in $\R^n$ into $\R^k$ (such that the set in $\R^{n} \times \R^{\ell}$ of points $(x,p)$ in the domain of of $\pi_p(x)$ is open and $\pi$ is smooth as a function of $(x,p)$ there) and $\eta$ is a continuous function on $\R^n$ which is supported close to $\Sigma_{\pi_{0}}$, then \label{integralcorollary} 
\begin{equation} \lim_{p \rightarrow {0}} \int_{\Sigma_{\pi_p}} \eta d \sigma_{\pi_p} = \int_{\Sigma_{\pi_0}} \eta d \sigma_{\pi_0}. \label{limformula} \end{equation}
\end{corollary}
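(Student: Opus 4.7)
The plan is to apply Lemma \ref{paramlemma} with $E := \Sigma_{\pi_0} \cap \supp \eta$ and $K := \supp \eta$; the hypothesis that $\eta$ is supported close to $\Sigma_{\pi_0}$ is exactly the statement that $E$ is compact and $K$ is $\pi_0$-close to $E$, so the lemma applies and produces an open $\pi_0$-close neighborhood $\Omega$ of $K$, a $\delta > 0$, and a smooth family of diffeomorphisms $\psi_p : \overline{\Omega} \to \psi_p(\overline{\Omega}) \subset \Omega_p$ satisfying the identity $\pi_p \circ \psi_p = \pi_0$ on $\overline{\Omega}$, together with the uniform convergences $\psi_p(x) \to x$, $D_x \psi_p(x) \to I$, and $w_p(x) \to 1$ on $\overline{\Omega}$ as $p \to 0$.

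The next step is to use $\psi_p$ to parametrize the portion of $\Sigma_{\pi_p}$ where $\eta$ is supported. Since $\eta$ vanishes outside $K \subset \psi_p(\Omega)$, any $z \in \Sigma_{\pi_p}$ with $\eta(z) \neq 0$ lies in $\psi_p(\Omega)$ and so has the form $z = \psi_p(x)$ for a unique $x \in \Omega$; then $\pi_0(x) = \pi_p(\psi_p(x)) = 0$ together with $\pi_0$-closeness of $\Omega$ to $E$ forces $x \in \Sigma_{0,\Omega}$. Conversely, the last bullet of Lemma \ref{paramlemma} ensures $\psi_p(\Sigma_{0,\Omega}) \subset \Sigma_{\pi_p}$. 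Thus the change of variables formula, combined with the identification of the push-forward density $w_p$, yields
\begin{equation*}
\int_{\Sigma_{\pi_p}} \eta \, d \sigma_{\pi_p} = \int_{\Sigma_{0,\Omega}} \eta(\psi_p(x)) \, \frac{w_p(x)}{\| d_x \pi_p(\psi_p(x)) \|} \, \dH^{n-k}(x).
\end{equation*}

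To pass to the limit, differentiate $\pi_p \circ \psi_p = \pi_0$ in $x$ to get $D_x \pi_p(\psi_p(x)) \, D_x \psi_p(x) = D_x \pi_0(x)$; since $D_x \psi_p \to I$ uniformly on $\overline{\Omega}$, this forces $\| d_x \pi_p(\psi_p(x)) \| \to \| d_x \pi_0(x) \|$ uniformly on $\Sigma_{0,\Omega}$. Combined with uniform continuity of $\eta$ on compacts (which gives $\eta \circ \psi_p \to \eta$ uniformly) and the uniform convergence $w_p \to 1$, and using the uniform positive lower bound on $\| d_x \pi_p(\psi_p(\cdot)) \|$ over $\Sigma_{0,\Omega}$ guaranteed by the lemma, the entire integrand converges uniformly on $\Sigma_{0,\Omega}$. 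Since $\mathcal H^{n-k}(\Sigma_{0,\Omega}) < \infty$, uniform convergence of the integrand delivers
\begin{equation*}
\lim_{p \to 0} \int_{\Sigma_{\pi_p}} \eta \, d \sigma_{\pi_p} = \int_{\Sigma_{0,\Omega}} \eta(x) \, \frac{\dH^{n-k}(x)}{\| d_x \pi_0(x) \|} = \int_{\Sigma_{\pi_0}} \eta \, d \sigma_{\pi_0},
\end{equation*}
with the last equality holding because $\eta$ is supported in $K \subset \Omega$.

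There is no real obstacle left at this stage: the heart of the argument is the construction of the family $\psi_p$ in Lemma \ref{paramlemma}. Without its uniform control on the parametrization, the density $w_p$, and the denominator $\| d_x \pi_p \|$, it would not even be clear how to write both sides of \eqref{limformula} in directly comparable form, let alone pass to the limit. Given the lemma, what remains is essentially a uniform-convergence computation on a compact set of finite Hausdorff measure.
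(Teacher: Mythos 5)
Your proof is correct and follows essentially the same route as the paper: apply Lemma \ref{paramlemma} with $K = \supp\eta$ and $E = \supp\eta \cap \Sigma_{\pi_0}$, pull back the integral via $\psi_p$ to the fixed reference manifold $\Sigma_{0,\Omega}$, and pass to the limit using the lemma's uniform control on $\psi_p$, $w_p$, and $\|d_x\pi_p\|$. The only cosmetic difference is that you upgrade the paper's bounded-pointwise-convergence plus Dominated Convergence argument to a fully uniform-convergence argument (supplying the clean chain-rule derivation $D_x\pi_p(\psi_p(x))\,D_x\psi_p(x) = D_x\pi_0(x)$ to get uniform convergence of $\|d_x\pi_p(\psi_p(\cdot))\|$), which is a minor strengthening but not a genuinely different proof strategy.
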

\begin{proof}
Apply Lemma \ref{paramlemma} with $K$ equal to the support of $\eta$ and $E := \supp \eta \cap \Sigma_{\pi_0}$. For all $||p|| \leq \delta$, the integral 
\[ \int_{\Sigma_{\pi_p}} \eta d \sigma_{\pi_p} \]
must be finite because $\eta$ is bounded and the intersection of the support of $\eta$ with $\Sigma_{\pi_p}$ is exactly the set $\Sigma_{p,K}$ from Lemma \ref{paramlemma}, which has finite $\pi_p$-coarea measure for all sufficiently small $p$.  For all such $p$, one can parametrize $\Sigma_{\pi_p} \cap K$ in terms of $\psi_p$ to obtain the identity
\begin{align*}
\int_{\Sigma_{\pi_p}} \eta d \sigma_{\pi_p} & = \int_{\Sigma_{\pi_p}} \eta \frac{\dH^{n-k}}{||d_x \pi_p||}
= \int_{\Sigma_{0,\Omega}} \eta(\psi_p(x)) w_p(x) \frac{\dH^{n-k}(x)}{||d_x \pi_p(\psi_p(x))||}.
\end{align*}
Now for every $||p|| \leq \delta$, the function
\[ \frac{\eta(\psi_p(x)) w_p(x)}{||d_x \pi_p(\psi_p(x))||} \]
is bounded on as a function of $x \in \Sigma_{0,\Omega}$ and $p$, and it moreover converges pointwise to $\eta(x) / ||d_x \pi_0(x)||$ for all $x$ in its support as $p \rightarrow 0$. Finiteness of the $(n-k)$-dimensional Hausdorff measure of $\Sigma_{0,\Omega}$ allows one to use the Lebesgue Dominated Convergence Theorem to immediately conclude \eqref{limformula} for any sequence of parameters $p$ tending to $0$.
\end{proof}

The final technical result of this section is the Fubini-type identity for coarea measure which was used in Section \ref{proofsec1} to prove Theorem \ref{main1}.
\begin{proposition}
Suppose $(\Omega,\pi,\Sigma)$ is a smooth incidence relation on $\R^n \times \R^{n'}$ of codimension $k$.
If $F$ is any nonnegative Borel measurable function on $\Sigma$, then
\begin{equation}
\int_{\R^{n}} \left[ \int_{\li{x}} F d \sigma \right] dx = \int_{\R^{n'}} \left[ \int_{\ri{y}} F d \sigma \right] dy. \label{mainfubini}
\end{equation}
\end{proposition}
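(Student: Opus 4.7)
The plan is to show that both sides of \eqref{mainfubini} equal a common third quantity, namely $\int_\Sigma F\,d\mu_\Sigma$, where $\mu_\Sigma$ is the coarea measure on $\Sigma$ regarded as an $(n+n'-k)$-dimensional embedded submanifold of $\R^n \times \R^{n'}$ determined by the joint defining function $\pi$. More precisely, writing $D\pi(x,y)$ for the $k \times (n+n')$ block matrix $[D_x\pi(x,y) \mid D_y\pi(x,y)]$ and $||d\pi(x,y)|| := \sqrt{\det D\pi (D\pi)^T}$ in the spirit of Proposition \ref{matprop}, I set $d\mu_\Sigma := \dH^{n+n'-k}/||d\pi||$. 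Since $||d\pi||^2 = ||d_x\pi||^2 + ||d_y\pi||^2$ (by Proposition \ref{matprop} applied to the block matrix), $||d\pi||$ is positive on $\Sigma$ and $\mu_\Sigma$ is well-defined.

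By Monotone Convergence, it suffices to treat $F$ continuous and compactly supported on a set $K \subset \Omega$ which is simultaneously $\pi(\cdot,y_0)$-close and $\pi(x_0,\cdot)$-close uniformly, so that $||d_x\pi||$, $||d_y\pi||$, and $||d\pi||$ are all bounded below by positive constants on $K$. An exhausting sequence of such cutoffs is produced by the same construction as Proposition \ref{partitionprop}, applied to the joint map $\pi$ viewed as a function of $(x,y)$: use the Implicit Function Theorem to cover $\Sigma \cap K$ by finitely many (Euclidean) balls on each of which $\Sigma$ is the graph of $k$ of the $n+n'$ coordinates over the remaining $n+n'-k$, and subordinate a continuous partition of unity. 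It therefore suffices to verify \eqref{mainfubini} for $F$ supported on a single such graph patch.

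On such a patch, say we may solve $\pi(x,y) = 0$ for $k$ of the $y$-coordinates, denoted $y''$, as smooth functions of $(x, y')$ where $y'$ is the remaining $(n'-k)$-tuple. A direct Schur-complement computation (essentially a repetition of the matrix manipulations in the proof of Proposition \ref{matprop}) shows that the Hausdorff element on $\li{x}$, written as a graph $y'' = y''(x,y')$, is
\[ d\H^{n'-k}(y)\big|_{\li{x}} = \frac{||d_y \pi||}{|\det D_{y''} \pi|}\, dy', \]
so that $d\sigma(y) = |\det D_{y''} \pi|^{-1}\, dy'$ on $\li{x}$, and similarly the joint Hausdorff element on $\Sigma$ is $|\det D_{y''} \pi|^{-1}||d\pi||\, dx\,dy'$, so $d\mu_\Sigma = |\det D_{y''} \pi|^{-1}\, dx\,dy'$. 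Classical Fubini on $\R^n \times \R^{n'-k}$ then gives
\[ \int_{\R^n} \int_{\li{x}} F\, d\sigma\, dx = \int F(x,y',y''(x,y'))\, \frac{dx\,dy'}{|\det D_{y''}\pi|} = \int_\Sigma F\, d\mu_\Sigma. \]
An entirely symmetric argument, splitting $x = (x',x'')$ with $D_{x''}\pi$ nonsingular and solving for $x''$ in terms of $(x', y)$, shows that $\int_{\R^{n'}}\int_{\ri{y}} F\, d\sigma\, dy = \int_\Sigma F\, d\mu_\Sigma$ as well, yielding \eqref{mainfubini}.

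The main obstacle is not the local computation (which is the determinantal identity above) but the covering step: ensuring that one can simultaneously solve for a set of $k$ $y$-coordinates on part of $\Sigma$ and a set of $k$ $x$-coordinates on (possibly other) parts, and then glue the local Fubini identities via a partition of unity without running into compatibility issues. This is handled cleanly because both local representations compute the same intrinsic integral $\int_\Sigma F\, d\mu_\Sigma$, which is manifestly independent of the choice of which coordinates are ``dependent,'' so the partition-of-unity sum collapses unambiguously on each side of \eqref{mainfubini}.
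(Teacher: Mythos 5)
Your proof takes a genuinely different route from the paper's. The paper invokes Federer's coarea formula for the joint map $\pi$ and obtains the intermediate identity \eqref{fubini} by concentrating a bump function $\varphi$ at $u=0$, leaning on the continuity-in-$u$ result from Corollary \ref{integralcorollary} for the slice integrals; you instead introduce the intrinsic joint coarea measure $d\mu_\Sigma := \dH^{n+n'-k}/||d\pi||$ on $\Sigma$ and show that each side of \eqref{mainfubini} equals $\int_\Sigma F\,d\mu_\Sigma$ by parametrizing $\Sigma$ locally as a graph, applying the Schur-complement / Sylvester determinant identity, and gluing patches with a partition of unity. The local computation is correct: with $A := D_{y'}\pi$ and $B := D_{y''}\pi$ one has
\[
\det\bigl(I + A^T(BB^T)^{-1}A\bigr) = \frac{\det(AA^T + BB^T)}{\det(BB^T)},
\]
from which both of your density formulas follow (and likewise for the joint block). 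Your route is arguably more elementary, since it bypasses the general coarea theorem in favor of the Implicit Function Theorem plus classical Fubini--Tonelli, and it makes the common pivot quantity $\int_\Sigma F\,d\mu_\Sigma$ explicit.

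Two corrections are needed. First, the claimed identity $||d\pi||^2 = ||d_x\pi||^2 + ||d_y\pi||^2$ is false in general. Proposition \ref{matprop} represents $||d\pi||^2$ as $\frac{1}{k!}\sum_{i_1,\dots,i_k}|M_{i_1\cdots i_k}|^2$ where $M = [D_x\pi \mid D_y\pi]$, and this sum runs over all $k$-tuples of columns from $\{1,\dots,n+n'\}$, including mixed tuples drawing some columns from each block; only the pure-$x$ and pure-$y$ tuples yield $||d_x\pi||^2$ and $||d_y\pi||^2$. What is true (and what the paper uses) is $||d\pi||^2 \geq ||d_x\pi||^2 + ||d_y\pi||^2$, which still gives the needed positivity of $||d\pi||$ on $\Sigma$, so your argument is not broken, but the equality should be replaced by an inequality. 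Second, the opening reduction ``By Monotone Convergence, it suffices to treat $F$ continuous and compactly supported'' does not hold as stated: a general nonnegative Borel function on $\Sigma$ is not an increasing pointwise limit of continuous functions. You do not actually need that reduction, however. Your graph parametrization turns each iterated integral, for any nonnegative Borel $F$ supported on a single patch, into a classical Tonelli integral of a Borel integrand over a Euclidean box, so Tonelli applies directly to Borel $F$; the cover is countable and the subsequent sum over a partition of unity is then justified by Monotone Convergence. Notice that the paper spends considerable effort (covering null sets by small balls, $L^1$-approximation by continuous functions, Dominated Convergence on slices) to handle this Borel extension, precisely because it does not have the intrinsic pivot $\int_\Sigma F\,d\mu_\Sigma$ in hand; your approach sidesteps most of that machinery once the reduction step is phrased correctly.
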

\begin{proof}
Let $g$ be any function of compact support contained in $\Omega$ such that both $||d_x \pi(x,y)||$ and $||d_y \pi(x,y)||$ are both strictly positive on the support of $g$.
The coarea formula (e.g., Federer \cite{federer1969} Theorem 3.2.12 when $k < \min\{n,n'\}$ and Theorem 3.2.5 when $k = \min\{n,n'\}$) dictates that for any continuous $\varphi$ on $\R^{k}$,
\begin{align*}
\int \varphi(\pi(x,y)) g(x,y) dx dy & = \int \varphi(u) \left[ \int_{\R^{n'}} \int_{x \, : \, \pi(x,y) = u} g(x,y) \frac{\dH^{n-k}(x)}{||d_x \pi(x,y)||} dy \right] du \\
& = \int \varphi(u) \left[ \int_{\R^{n}} \int_{y \, : \, \pi(x,y) = u} g(x,y) \frac{\dH^{n'-k}(y)}{||d_y \pi(x,y)||} dx \right] du \\
& = \int \varphi(u) \left[ \int_{(x,y)  \, : \, \pi(x,y) = u} g(x,y) \frac{\dH^{n + n'-k}(x,y)}{||d_{x,y} \pi(x,y)||} \right] du. 
\end{align*}
Here the identity \eqref{genmat} has been implicitly used, as the coarea formula is typically written with $\sqrt{ \det (D_x \pi(x,y)) (D_x \pi(x,y))^T}$ in the place of $||d_x \pi(x,y)||$, etc. Likewise, in the final equality, the coarea formula is applied treating both $x$ and $y$ variables as components of a single point $(x,y) \in \R^{n+n'}$. Because $||d_{x,y} \pi(x,y)||^2 \geq ||d_x \pi(x,y)||^2 + ||d_y \pi(x,y)||^2$, it must be the case that $||d_{x,y} \pi(x,y)||$ is bounded below on the support of $g$.

For each $u \in \R^k$ and each $y \in \R^{n'}$, the function $g(\cdot,y)$ is supported close to the smooth zero set of the map $y \mapsto \pi(x,y) - u$ because the lower bound for $||d_x \pi||$ on the support of $g$ guarantees that the intersection of this support with the zero set of $\pi(x,y) - u$ will also be compact. Consequently, Corollary \ref{integralcorollary} implies that the map
\[ (y,u) \mapsto \int_{x \, : \, \pi(x,y) = u} g(x,y) \frac{\dH^{n-k}(x)}{||d_x \pi(x,y)||} \]
is continuous. The support of this function in the variables $(y,u)$ is also necessarily compact, and so as a consequence 
\[ u \mapsto \int_{\R^{n'}} \int_{x \, : \, \pi(x,y) = u} g(x,y) \frac{\dH^{n-k}(x)}{||d_x \pi(x,y)||} dy \]
is a continuous function of $u$. Likewise
\[ \int_{\R^{n}} \int_{y \, : \, \pi(x,y) = u} g(x,y) \frac{\dH^{n'-k}(y)}{||d_y \pi(x,y)||} dx \]
is a continuous function of $u$, as is
\[ u \mapsto  \int_{(x,y)  \, : \, \pi(x,y) = u} g(x,y) \frac{\dH^{n + n'-k}(x,y)}{||d_{x,y} \pi(x,y)||}. \]
 By choosing an appropriate sequence of functions $\varphi$ concentrating around the point $u=0$, it follows that
\begin{equation} 
\begin{split}
\int_{\R^{n'}} \left[ \int_{\ri{y}} g(x,y) \frac{\dH^{n-k}(x)}{|| d_x \pi(x,y)||} \right] dy & = \int_{\R^{n}} \left[ \int_{\li{x}} g(x,y) \frac{\dH^{n'-k}(y)}{|| d_y \pi(x,y)||} \right] dx \\
& = \int_{\Sigma} g(x,y) \frac{\dH^{n+n'-k}(x,y)}{||d_{x,y} \pi(x,y)||}.
\end{split}
 \label{fubini} \end{equation}
 The goal now is to extend \eqref{fubini} to successively larger classes of functions $g$.

Fix any compact set $K \subset \Omega$ on which $||d_x \pi(x,y)||$ and $||d_y \pi(x,y)||$ never vanish. By compactness of $K$ and the Implicit Function Theorem, there is a neighborhood $U$ of $K \cap \Sigma$ such that $U \cap \Sigma$ is an embedded submanifold of $\R^{n+n'}$ of codimension $k$ with finite Hausdorff measure.  Let $E \subset K \cap \Sigma$ have $(n+n'-k)$-dimensional Hausdorff measure equal to zero. For any positive $\epsilon$ and $\delta$, we may cover $E$ by countably many Euclidean balls $B_j$ of radius at most $\delta$ such that $\sum_j \dH^{n+n'-k}( B_j \cap E) < \epsilon$. By taking $\delta$ sufficiently small, it may be assumed for each $j$ that \eqref{fubini} holds when $g := \varphi_j$ for some continuous nonnegative function $\varphi_j$ which is identically $1$ on $B_j$ and identically zero on the complement of the double of $B_j$. By Monotone Convergence, the identity \eqref{fubini} holds also for $g := \sum_j \varphi_j$. Since $g$ dominates the characteristic function of $E$, letting $\delta \rightarrow 0^+$ implies that \eqref{fubini} also holds whenever $g$ is the characteristic function of a $(n+n'-k)$-dimensional null set contained in $K \cap \Sigma$. An important consequence is that any such null set $E$ has the property that $\li{x} \cap E$ is an $(n'-k)$-dimensional null set for almost every $x \in \R^n$ and similarly $\ri{y} \cap E$ is an $(n-k)$-dimensional null set for almost every $y \in \R^{n'}$.

Now continuous functions on the submanifold $U \cap \Sigma$ are dense in the integrable functions with respect to $(n+n'-k)$-dimensional Hausdorff measure on $U \cap \Sigma$. Likewise, all continuous functions supported on the submanifold $U \cap \Sigma$ have extensions to $\R^{n} \times \R^{n'}$ supported on some fixed compact set $\tilde K$ containing $K$ and having the property that $||d_x \pi(x,y)||$ and $||d_y \pi(x,y)||$ never vanish on $\tilde K$. Thus given any Borel function $g$ on $U \cap \Sigma$ which is integrable with respect to $(n+k'-k)$-dimensional Hausdorff measure on $\Sigma$, there must be a sequence $\{g_j\}_{j=1}^\infty $ be a sequence of continuous functions on $\tilde K$ such that
\[ \int_{\Sigma} |g(x,y) - g_j(x,y)| \frac{\dH^{n+n'-k}(x,y)}{||d_{x,y} \pi(x,y)||} \leq 2^{-j}. \]
Exponential convergence implies that
\begin{equation} \int_{\Sigma} \sum_{j=1}^\infty |g(x,y) - g_j(x,y)| \frac{\dH^{n+n'-k}(x,y)}{||d_{x,y} \pi(x,y)||} < \infty \label{converge} \end{equation}
which in turn means that the sum over $j$ inside \eqref{converge} is finite on $U \cap \Sigma$ except possibly for some $(n+n'-k)$-dimensional null set. This means that $g_j$ must converge to $j$ almost everywhere on $U \cap \Sigma$, and consequently it means that for almost every $x$, $g_j(x,y)$ converges to $g(x,y)$ almost everywhere on $\li{x}$ (and likewise $g_j$ converges to $g$ almost everywhere on $\ri{y}$ for almost every $y$). By Dominated Convergence (using the dominating function $|g_1(x,y)| + \sum_{j > 1} |g_{j}(x,y) - g_{j-1}(x,y)|$) one can apply \eqref{fubini} to each function in the sequence $g_j$ and pass to the limit as $j \rightarrow \infty$ to conclude that \eqref{fubini} holds for $g$ itself. If $g$ is nonnegative but not integrable on $\Sigma$, the identity \eqref{fubini} can still be seen to be true for $g$ by bounding it below by a sequence of Borel-measurable simple functions whose integrals tend to $\infty$.

Finally, taking a sequence of sets $K_m$ defined to be those points $(x,y) \in \Omega$ at which $||x|| + ||y|| \leq m, \dist((x,y),\Omega^c) \geq 1/m, ||d_x \pi(x,y)|| \geq 1/m$, and  $||d_y \pi(x,y)|| \geq 1/m$ gives an increasing sequence of compact sets on which $||d_x \pi(x,y)||$ and $||d_y \pi(x,y)||$ never vanish. The union of these sets is exactly the set of points in $\Omega$ at which $||d_x \pi(x,y)||$ and $||d_y \pi(x,y)||$ are nonzero, and so
\[ \lim_{m \rightarrow \infty}  g(x,y) \chi_{K_m}(x,y) = g(x,y) \chi_{(x,y) \in \Omega, ||d_x \pi(x,y)||, ||d_y \pi(x,y)|| > 0}. \]
Because \eqref{fubini} is already known to hold for the function $g(x,y) \chi_{K_m}(x,y)$, it follows by Monotone Convergence that \eqref{fubini} must hold for any nonnegative Borel-measurable function $g$.
\end{proof}

\bibliography{mybib}

\end{document}